\patchcmd{\thebibliography}{\section*{\refname}}{}{}{}
\newtheorem{Theorem}{Theorem}[section]
\newtheorem{Proposition}[Theorem]{Proposition}
\newtheorem{Lemma}[Theorem]{Lemma}
\newtheorem{Definition}[Theorem]{Definition}
\newtheorem{Meta-Definition}[Theorem]{Meta-Definition}
\newtheorem{Corollary}[Theorem]{Corollary}
\newtheorem{Remark}[Theorem]{Remark}
\newtheorem{Example}[Theorem]{Example}
\newtheorem{Note}[Theorem]{Note}
\DeclareMathOperator{\re}{Re}
\DeclareMathOperator{\id}{id}
\DeclareMathOperator{\Ad}{Ad}
\DeclareMathOperator{\im}{Im}
\DeclareMathOperator{\TO}{\underline{\mathbf{TO}}}
\DeclareMathOperator{\FROM}{\underline{\mathbf{FROM}}}
\DeclareMathOperator{\Arch}{Arch}
\DeclareMathOperator{\Aff}{\mathcal{A}ff}
\title{Natural levels in return maps of elementary polycycles}
\author{Melvin Yeung}
\date{}
\begin{document}
	
	\maketitle
	
	\begin{abstract}
		We will provide a proof of a known specific case of Dulac's Theorem in the style of Ilyashenko. From this we derive a quasi-analyticity result for some return maps of polycycles and we give a Structural Theorem for the formal asymptotics of such a polycycle.
	\end{abstract}
	
	\section{Introduction}
	
	\subsection{Goal}
	
	The aim of this article is to show a general reduction contained in \cite{ilyashenkoFiniteness} originally intended to prove:
	
	\begin{Theorem}[Dulac (Ilyashenko, Ecalle)]
		Any real planar polynomial vector field has at most a finite amount of limit cycles, i.e. isolated periodic orbits.
	\end{Theorem}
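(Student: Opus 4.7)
The plan is to reduce the theorem, in the style of Ilyashenko, to a quasi-analyticity statement for return maps along polycycles. If a polynomial vector field $X$ on $\mathbb{R}^2$ had infinitely many limit cycles, then after compactifying to the Poincar\'e sphere and extracting a Hausdorff limit of a countable subfamily of these cycles, a Poincar\'e--Bendixson argument forces the limit set to be a polycycle $\Gamma$, i.e.\ a cycle consisting of singular points joined by heteroclinic orbits. It therefore suffices to show that no polycycle of a polynomial field can be accumulated by limit cycles: equivalently, that the shift $\Delta(x) = P(x) - x$ of the Poincar\'e return map $P$ along any transversal to $\Gamma$ has an isolated zero at the polycycle.

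The first structural step is desingularization: after finitely many blow-ups, each singular point on $\Gamma$ may be assumed elementary (at least one nonzero eigenvalue). The semi-local Dulac transition map $\Delta_i$ between an incoming and outgoing transversal of such a singularity then admits an asymptotic expansion in a natural scale built from powers, logarithms and iterated exponentials, via the normal form theory of Dulac and, in the resonant or nilpotent cases, of Ilyashenko. The full return map factors as a composition $\Delta = \Delta_k \circ \cdots \circ \Delta_1$ of such elementary Dulac maps, up to analytic regular transitions along the edges of $\Gamma$.

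The core of the argument is quasi-analyticity. Passing to the logarithmic chart $\zeta = -\log x$, one works in the class of \emph{almost regular} germs: those admitting a holomorphic extension to a \emph{standard quadratic domain} of the form $\{\zeta \in \mathbb{C} : \re \zeta > C\sqrt{|\im \zeta|+1}\}$, with controlled asymptotic behaviour at infinity. One shows (i) each elementary Dulac map lifts, in the $\zeta$-chart, to an almost regular germ, and (ii) the class is closed under composition. Consequently $\Delta$ itself is almost regular. A Phragm\'en--Lindel\"of argument on standard quadratic domains then shows that any almost regular germ which is flat at infinity (decays faster than every $e^{-c\zeta}$) must vanish identically; since $\Delta \not\equiv 0$, its zero at the polycycle has to be isolated, which contradicts the assumed accumulation and finishes the proof.

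The main obstacle, I expect, is step (i): constructing the holomorphic extension of each individual Dulac map to a standard quadratic domain, with estimates strong enough to survive composition in step (ii). This requires delicate sectorial normalization of elementary singularities, with the resonant and nilpotent elementary cases being genuinely hard and handled either by Ecalle's resurgent calculus or by Ilyashenko's superexact asymptotic series, together with careful bookkeeping of how the quadratic-domain geometry transforms under the composition of successive transitions. By comparison, the compactification and blow-up reduction at the start, and the Phragm\'en--Lindel\"of closing argument at the end, are conceptually clean once this analytic backbone is in place.
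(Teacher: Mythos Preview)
First, a framing point: the paper does \emph{not} itself prove this theorem. Dulac's theorem is stated in the introduction as the motivating result; the paper then proves only the special case of \emph{convergent} polycycles (Theorem~\ref{ThmConvPoly}), where every saddle is analytically conjugate to its formal normal form. So there is no ``paper's own proof'' of the full statement to compare against, only the general architecture the paper sets up (Structural Theorem, functional cochains, Additive Decomposition) as a roadmap toward the full result.

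That said, your sketch has a genuine gap, and it is exactly the gap the paper's machinery is designed to fill. Your step (i)--(ii) asserts that each elementary Dulac map is, in the $\zeta$-chart, an almost regular germ on a standard quadratic domain, and that this class is closed under composition, so that the full return map $\Delta$ is again almost regular and a single Phragm\'en--Lindel\"of argument finishes. This is correct only for polycycles of depth~$0$, i.e.\ with hyperbolic saddles only. For a \emph{semihyperbolic} saddle the transit map in the log chart is essentially $\exp$ (going to the centre direction) or $\ln$ (coming from it); these are not almost regular, and composing them with almost regular maps immediately leaves the class. The monodromy of a balanced polycycle of depth $n\ge 1$ is therefore not almost regular at all, and no single quadratic-domain Phragm\'en--Lindel\"of can apply directly.

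What replaces your single class is the stratified structure the paper describes: the Structural Theorem puts $\Delta^{\log}$ into the group generated by $\Aff$, $A^i\mathcal{R}^0$ and $A^j\mathcal{NC}$ for $0\le i\le n$, $0\le j\le n-1$; one then needs the functional cochains $\mathcal{FC}^n_i$ at each level, the Shift and Conjugation Lemmas to reorder terms, and the Additive Decomposition Theorem to write $\Delta^{\log}-\id$ as a finite sum of pieces living at \emph{different} exponential scales $e^{-\lambda\exp^k(\mu\zeta)}$. A separate Phragm\'en--Lindel\"of argument is then needed at each level, on domains of the form $A^{-n}(g^{-1})(\Omega)$ rather than on a single quadratic domain. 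Your last paragraph correctly flags that the hard work is in the sectorial normalizations and the domain bookkeeping, but the specific framework you propose (one class, one domain type, one Phragm\'en--Lindel\"of) is not what survives the semihyperbolic case; the multi-level scheme is not an optional refinement but the actual mechanism.
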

	
	Then we will be apply this formalism to prove a known case of the Theorem of Dulac (see \cite{Moussu1991} and \cite{Kaiser2017AnalyticCO}).\\
	
	For a more historic overview of this problem and the broader 16th problem of Hilbert, see \cite{Ilyashenko2002CentennialHO}. The main difficulty of the Theorem of Dulac is the following:
	
	\begin{Theorem}
		Let $\Gamma$ be a real analytic polycycle in a real analytic vector field on a real analytic $2$-manifold, then there exists an open $U$ containing $\Gamma$, but no limit cycles. 
	\end{Theorem}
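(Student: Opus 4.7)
The plan is to study the first-return (Poincar\'e) map $f$ of $\Gamma$ on a transversal section at a regular point of the polycycle, and to show that $f - \id$ either vanishes identically or has $0$ as an isolated zero. Limit cycles accumulating on $\Gamma$ correspond exactly to fixed points of $f$ accumulating at the base point $0$; in the identically-zero case $\Gamma$ is surrounded by a band of periodic orbits, so a sufficiently small $U$ contains no isolated periodic orbits, while in the other case any $U$ small enough to exclude the other fixed points of $f$ will do.

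The first concrete step is to write $f$ as a finite alternating composition of local Dulac maps at the vertices of $\Gamma$ and real-analytic transition maps along the regular arcs joining them. The latter are honest germs of analytic diffeomorphisms, so all the delicate information is carried by the Dulac maps, which are transition maps from one transversal to another across an elementary singular point.

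The heart of the argument is then to exhibit an algebra $\mathcal{A}$ of germs at $0^{+}$ that (i) contains the identity and all Dulac maps of the singularities appearing on $\Gamma$, (ii) is stable under composition and under subtraction, and (iii) is quasi-analytic with respect to an appropriate asymptotic scale built from real monomials and iterated logarithms, in the sense that a germ with vanishing asymptotic expansion must be the zero germ. Granting (i)--(iii), both $f$ and $f - \id$ lie in $\mathcal{A}$; if the asymptotic expansion of $f-\id$ is nontrivial its leading term isolates the zero at $0$, and if the expansion vanishes then quasi-analyticity forces $f = \id$. The required dichotomy, and with it the theorem, follow.

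The main obstacle is the simultaneous verification of (ii) and (iii). For a single hyperbolic saddle with irrational hyperbolicity ratio, the Dulac map is essentially a real power and the analysis is classical; but as soon as the vertices of $\Gamma$ have incommensurable ratios, composing their Dulac maps generates iterated logarithms and transseries-like phenomena that defeat naive quasi-analytic classes. The reduction announced in the introduction, based on Ilyashenko's ``natural levels'' formalism, is designed precisely to stratify the resulting asymptotics into a hierarchy of levels whose coefficients are themselves quasi-analytic in a simpler class, and then to close this hierarchy under composition. Carrying this through for the admissible family of elementary polycycles is the step I would expect to consume the bulk of the work; the reduction from quasi-analyticity of the return map to the absence of accumulating limit cycles, sketched above, is by comparison essentially formal.
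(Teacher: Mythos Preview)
Your outline is the same strategy the paper adopts: reduce to a dichotomy for the monodromy map, decompose that map as analytic transitions composed with Dulac maps at the vertices, and conclude via a quasi-analytic class stable under the relevant operations. The paper states exactly this reduction and then spends its effort on building the concrete hierarchy of classes (the functional cochains $\mathcal{FC}^{n}_{i}$ and axioms $A1$--$A8$, leading to the Additive Decomposition Theorem) that plays the role of your abstract $\mathcal{A}$.

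There is one genuine gap. The statement is about an arbitrary real analytic polycycle, whose vertices need not be elementary. You pass silently from ``vertices of $\Gamma$'' to ``Dulac maps across an elementary singular point''; that passage requires desingularization. One must blow up the non-elementary vertices so that the lifted polycycle has only elementary (hyperbolic or semihyperbolic) equilibria, and check that the monodromy map of $\Gamma$ coincides with that of the lifted polycycle. The paper flags this explicitly (``monodromy maps and desingularization'') and only then states the dichotomy for polycycles with elementary equilibria. Without this step your decomposition into Dulac maps is not available.

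A smaller point: calling $\mathcal{A}$ an ``algebra'' closed under composition and subtraction is a bit loose. What is actually needed, and what the paper builds, is a compositional group $G^{n}$ together with additive classes $\mathcal{F}^{n}_{ig}$ in which $g - \id$ decomposes as a finite sum of terms of strictly decreasing size (the Additive Decomposition Theorem); quasi-analyticity then enters as axiom $A1$ and a Phragm\'en--Lindel\"of argument on carefully controlled complex domains. Your sketch is right in spirit, but the object you want is not a single quasi-analytic algebra; it is a stratified family whose closure under composition is exactly the ``bulk of the work'' you anticipate.
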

	
	The way this is tackled is using so-called monodromy maps and desingularization. For definitions of polycycles and monodromy maps, see \cite[Chapter IV, part 24C]{Ilyashenko08lectureson}. The theorem above is easily implied by:
	
	\begin{Theorem}\label{ThmFinLim}
		Let $\Delta_{\Gamma}$ be a monodromy map of a polycycle with only elementary equilibria (i.e. ones with at least one non-zero eigenvalue), then either:
		
		\begin{enumerate}
			\item{$\Delta_{\Gamma} - \id \equiv 0$}
			\item{$\Delta_{\Gamma} - \id$ has no zeroes close enough to the origin.}
		\end{enumerate}
	\end{Theorem}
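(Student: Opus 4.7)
The strategy is to decompose the monodromy map $\Delta_\Gamma$ as the alternating composition of local "corner" Dulac maps at each elementary equilibrium on $\Gamma$ and real-analytic transition maps along the regular arcs joining consecutive equilibria. The transitions are germs of analytic diffeomorphisms between transversal sections and are harmless. All analytic content therefore concentrates in the corner maps, and the proof reduces to (a) a careful description of their asymptotics at $0$ and (b) a quasi-analyticity statement for the class of germs built out of them.

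Concretely, I would proceed in four steps. First, I would run through each type of elementary equilibrium that is compatible with a polycycle vertex — hyperbolic saddles (possibly resonant) and semi-hyperbolic saddle-nodes — put the local field into a sufficiently good normal form, and extract the corresponding Dulac corner map together with its formal Dulac expansion. In the non-resonant hyperbolic case this expansion has leading shape $C x^\lambda(1+o(1))$; resonance brings in logarithms, and the semi-hyperbolic case adds exponentially small correction terms beyond the power-log part. Second, I would show that the class of germs thus obtained is stable under composition with analytic diffeomorphisms, and that composition operates cleanly on the formal level, producing a well-defined formal Dulac series $\widehat{\Delta}_\Gamma$ attached to $\Delta_\Gamma$.

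Third — and this is the main point — I would prove quasi-analyticity: any germ in the constructed class whose formal expansion is trivial is itself the zero germ. The standard route is to extend each corner map to a complex sectorial (``standard quadratic'') domain on which it remains single-valued, estimate its growth there, and apply a Phragm\'en--Lindel\"of argument to exclude non-trivial flat functions. Finally, one concludes by a dichotomy applied to $\Delta_\Gamma - \id$: either its formal Dulac expansion vanishes, and then step three forces $\Delta_\Gamma - \id \equiv 0$, giving case (1); or this expansion has a non-trivial dominant term, which controls the sign of $\Delta_\Gamma - \id$ on a punctured neighbourhood of $0$ and rules out a sequence of zeros accumulating at the origin, giving case (2).

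The hard part is step three. Steps one and two are essentially normal-form and bookkeeping, and the concluding dichotomy is a formality once a good quasi-analytic class is in hand. Quasi-analyticity, by contrast, demands a complex extension of each corner map that is compatible with the whole composition along $\Gamma$; in the presence of semi-hyperbolic vertices the exponentially flat corrections must be tracked through every subsequent composition, and it is precisely the amount of such behaviour one is willing to allow that distinguishes the specific case treated here (in the spirit of \cite{Moussu1991,Kaiser2017AnalyticCO}) from the full Ilyashenko--\'Ecalle theorem.
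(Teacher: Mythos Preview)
Your high-level strategy --- decompose into corner maps plus analytic transitions, assemble a formal object, prove quasi-analyticity via Phragm\'en--Lindel\"of, then run a dichotomy --- is correct in outline and matches the paper's overall shape. But your step two hides a genuine gap, and the paper's machinery is built precisely to fill it. You assert that composition ``operates cleanly on the formal level, producing a well-defined formal Dulac series $\widehat{\Delta}_\Gamma$''. This is true only when every vertex is hyperbolic. A semi-hyperbolic corner map in the logarithmic chart is essentially a composition with $\exp$ (or $\ln$), and $\exp$ composed with a Dulac series is not a Dulac series. So for a polycycle of positive depth there is no single formal series into which the monodromy collapses; your step three then has nothing to be quasi-analytic \emph{about}.

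The paper's substitute for your $\widehat{\Delta}_\Gamma$ is the Additive Decomposition Theorem (Theorem~\ref{ThmADT}): in the logarithmic chart, $\Delta_\Gamma^{\log}$ decomposes as a \emph{finite} sum $a + \phi_1 + \cdots + \psi_w$ with $a$ affine and each remaining term lying in a class $\mathcal{F}^{m}_{i\bar f}$ indexed by a depth $m$ and an equivalence class $\bar f$, these indices being totally ordered by asymptotic size (a depth-$m$ term is of order $e^{-\lambda\exp^{m}(\mu\zeta)}$). Reaching this decomposition is the bulk of the work: the Structural Theorem rewrites the composition as a word in $A^i\mathcal{R}^0$ and $A^j\mathcal{NC}$, and the Shift and Conjugation Lemmas (packaged as the Sufficiency of Axioms, Theorem~\ref{ThmSuffAxFull}) allow one to reorder and then Taylor-expand that word into the sum above. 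Your step three is correspondingly reorganised: instead of one quasi-analyticity statement, one proves axiom $A1$ --- two-sided exponential bounds --- for each class $\mathcal{FC}^n_i$ separately, by inductively tracking domains of analytic continuation and applying Phragm\'en--Lindel\"of level by level.

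Finally, note that the paper does \emph{not} prove Theorem~\ref{ThmFinLim} in full; it only carries the programme through for convergent polycycles (Theorem~\ref{ThmConvPoly}), where $\mathcal{NC}=\{\id\}$ and the domain-tracking is tractable. The general case requires the same scheme with Ilyashenko's almost-regular maps and normalising cochains in place of convergent series, which the paper explicitly sets aside (Subsection~\ref{SubsecOrigin}). Your closing sentence shows you are aware of this distinction, but your steps one through three as written do not yet reflect it.
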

	
	However this Theorem has both easy and hard cases, depending on something called the depth of the polycycle:\\
	
	So take a polycycle homeomorphic to a circle in a vector field on a real analytic $2$-manifold, of the form described above.\\
	
	We may now parametrize our polycycle, say $\Gamma$, with $\gamma: [0, 1] \to \Gamma$, starting at an arbitrary point $x \in \Gamma$, say that $x$ is not equal to an equilibrium. Suppose that $\gamma$ is injective on $(0, 1)$.\\
	
	Then for a $t \in [0, 1]$ we can define the depth of $\gamma$ at $t$, $D(\gamma, t)$ as follows:
	
	\[D(\gamma, t) \coloneqq \# \{\text{semihyperbolic saddles in }\gamma((0, t]) \text{ from the centre direction}\}\] 
	\[- \# \{\text{semihyperbolic saddles in }\gamma((0, t]) \text{ from the hyperbolic direction}\}\]~\\
	
	Note first that this is well-defined because a polycycle only has a transit map along a 'single side'.\\
	
	Using $D$ we can get the trivial cases of Theorem \ref{ThmFinLim}:
	
	\begin{Theorem}
		If $D(\gamma, 1) \neq 0$, then the monodromy map of $\Gamma$ is either flat or inverse to flat, so there are only a finite amount of limit cycles.
	\end{Theorem}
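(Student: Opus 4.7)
The strategy is to decompose $\Delta_\Gamma$ as a composition of local Dulac corner maps together with analytic transition diffeomorphisms, and to track an integer ``logarithmic level'' through this composition. Choosing transversals $\Sigma_1,\ldots,\Sigma_N$ on the regular arcs of $\Gamma$, one writes
\[
\Delta_\Gamma \;=\; R_N \circ T_N \circ \cdots \circ R_1 \circ T_1,
\]
where $T_i$ is the corner map at the $i$-th vertex $p_i$ and each $R_i$ is an analytic diffeomorphism fixing $0$ with nonzero derivative, hence asymptotically trivial.

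The $T_i$ are classified by local normal forms. A hyperbolic saddle with eigenvalue ratio $\lambda_i > 0$ contributes $T_i(x) = c_i x^{\lambda_i}(1+o(1))$. For a semihyperbolic equilibrium in a normal form $\dot x = \pm x^{p_i+1}(1+\cdots)$, $\dot y = \mp y$, explicit integration between transversals distinguishes two regimes: entering along the hyperbolic manifold gives a flat corner map $T_i(x) \sim c_i \exp(-1/x^{p_i})$, while entering along the centre manifold gives an inverse-to-flat corner map $T_i(x) \sim c_i/|\ln x|^{1/p_i}$. These two cases contribute with opposite signs to the depth $D(\gamma, t)$.

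I then introduce an integer level $\ell$ on positive germs at $0$: $\ell(f) = 0$ for polynomial-like germs (in particular for analytic diffeomorphisms and hyperbolic corner maps), $\ell(f) = +1$ for flat corner maps, and $\ell(f) = -1$ for inverse-to-flat ones, extended to compositions by additivity $\ell(f\circ g) = \ell(f) + \ell(g)$. Writing $L = -\ln$, the identities $L(x^\lambda) = \lambda L(x)$, $L(e^{-1/x}) = 1/x$ and $L(1/L(x)) = L(L(x))$ show that hyperbolic scalings interact transparently with the log/exp-shifts produced by the semihyperbolic corner maps, so
\[
\ell(\Delta_\Gamma) \;=\; \sum_{i=1}^N \ell(T_i) \;=\; \pm\, D(\gamma, 1).
\]

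If $D(\gamma, 1) \neq 0$ then $\ell(\Delta_\Gamma) \neq 0$: positive level makes $\Delta_\Gamma$ flat, so $\Delta_\Gamma - \id \sim -x$ near $0$; negative level makes $\Delta_\Gamma$ inverse-to-flat, so $\Delta_\Gamma - \id \sim \Delta_\Gamma > 0$ near $0$. Either way $\Delta_\Gamma - \id$ has no zero accumulating at $0$, and only finitely many limit cycles can meet a small neighbourhood of $\Gamma$. The main technical obstacle is the rigorous additivity of $\ell$ under composition, since a naive composition of a flat with an inverse-to-flat germ can cancel; the content of the additivity is that such cancellations correspond exactly to matching $\pm 1$ level shifts, so that the net level after $N$ corner maps equals the signed count $\pm D(\gamma,1)$ of semihyperbolic vertices.
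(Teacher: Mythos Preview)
Your approach is essentially the same as the paper's: the paper gives only a one-line sketch, observing that normal form theory makes each semihyperbolic transit map roughly $e^{-1/z}$ or its inverse (and each hyperbolic one a power), so an unbalanced count $D(\gamma,1)\neq 0$ forces $\Delta_\Gamma$ to be flat or inverse to flat; your level function $\ell$ is exactly a bookkeeping device for this count. The one point worth tightening is that $\ell$ is not really a well-defined invariant of germs but of the composition string---what you actually need is that in the logarithmic chart the hyperbolic corners and analytic transitions are affine-plus-small while the semihyperbolic corners are $\exp$ or $\ln$, so the net growth of $\Delta_\Gamma^{\log}$ is governed by the signed count of $\exp$'s minus $\ln$'s; this is precisely the viewpoint the paper adopts in its later Structural Theorem.
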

	
	A proof follows easily from normal form theory, for example a normal form such as \cite[Chapter IV, part 22H, 22.23]{Ilyashenko08lectureson} implies that the transit map of a semihyperbolic saddle is roughly $e^{-\frac{1}{z}}$ or its inverse, resulting in the Theorem. From this $D(\gamma, t)$ we also get our interesting case:
	
	\begin{Definition}
		We call $\Gamma$ balanced if $D(\gamma, 1) = 0$.\\
		
		A parametrization obviously exists such that $D(\gamma, t)$ is always $\leq 0$, in that case we call the minimum of $D(\gamma, t)$ the depth of $\Gamma$.
	\end{Definition}
	
	The point of the general procedure of \cite{ilyashenkoFiniteness} is essentially that the monodromy map of a balanced polycycle decomposes in a natural way into a finite sum of essentially different sizes. Somewhat similarly to the cryptolinear formulas found in \cite[Chapter 5]{ecalleconstructiveproof}. For a concrete formulation see \ref{ThmADT}.\\
	
	A polycycle consists of a sequence of saddles which are either hyperbolic or semihyperbolic. Any semihyperbolic saddle (see e.g. \cite[I, 4]{Ilyashenko08lectureson}) is formally equivalent to a semihyperbolic saddle of the form:
	
	\[\frac{x^{k + 1}}{1 + a x^{k}}\frac{\partial}{\partial x} - y\frac{\partial}{\partial y}\]
	
	\begin{Definition}\label{DefConvSad}
		We call a semihyperbolic saddle convergent if there exists a real analytic function (analytic in zero) conjugating the semihyperbolic saddle to that of its formal normal form above.\\
		
		Analogously for hyperbolic saddles. I.e. We call a hyperbolic saddle convergent is there exists a real analytic function (analytic in zero) conjugating the hyperbolic saddle to one of the following:
		
		\[x\frac{\partial}{\partial x} - \lambda y\frac{\partial}{\partial y}\]
		
		\[x\frac{\partial}{\partial x} - y\left(\frac{m}{n} \pm x^{mk}y^{nk} \pm a x^{2km}y^{2kn}\right)\frac{\partial}{\partial y}\]
		
		For some $\lambda, a \in \mathbb{R}, m, n \in \mathbb{N}$, $\gcd(m, n) = 1$ and supposing the $\pm$ align, i.e. they are either both $+$ or both $-$.\\
		
		We call a polycycle convergent if all the saddles it contains are convergent.
	\end{Definition}
	
	\begin{Remark}
		Note that this is not only a restriction on the semihyperbolic saddles but also on the hyperbolic saddles. Remember (see e.g. \cite[Chapter 24]{Ilyashenko08lectureson}) that transit maps of hyperbolic saddles are given in the logarithmic chart $\zeta = -\ln(z)$ by almost regular maps, i.e. analytic maps on standard quadratic domains with some associated asymptotic Dulac series.\\
		
		In some sense what we will ask for with convergent hyperbolic saddles is that this Dulac series converges pointwise on some halfplane $\{\re(z) \geq a\}$ for some $a \in \mathbb{R}$. See e.g. \cite[p. 18]{ilyashenkoFiniteness} for a sketch of proof that the transit map of the normal forms is pointwise convergent. 
	\end{Remark}
	
	From here we get the case of the Theorem of Dulac we actually want to prove in this article:
	
	\begin{Theorem}\label{ThmConvPoly}
		Let $\Delta$ be the monodromy map of a balanced convergent polycycle, then either:
		
		\begin{enumerate}
			\item{$\Delta - \id \equiv 0$}
			\item{$\Delta - \id$ has no zeroes close enough to the origin.}
		\end{enumerate}
	\end{Theorem}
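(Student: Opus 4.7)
The plan is to pass to the logarithmic chart $\zeta = -\ln(z)$ and show that $\Delta - \id$ extends analytically to a standard quadratic domain $\Omega \subset \mathbb{C}$ and admits a convergent asymptotic Dulac series there. Once this is established, the dichotomy will follow from an Ilyashenko-style quasi-analyticity theorem for almost regular maps on such domains: such a map is either identically zero or has only finitely many zeros in $\Omega$, and transporting back via $z = e^{-\zeta}$ gives the claimed alternative at the origin.

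Concretely, I would decompose $\Delta$ as a composition of transit maps along the saddles of $\Gamma$, alternating hyperbolic and semihyperbolic. For each convergent hyperbolic saddle the transit map, in the $\zeta$ chart, is by hypothesis (and the remark following Definition \ref{DefConvSad}) an almost regular map on a standard quadratic half-plane whose Dulac series converges pointwise. For each convergent semihyperbolic saddle, the normal form from Definition \ref{DefConvSad} makes the transit map essentially $\exp(-1/x^{k})$-like from the centre direction and its analytic inverse from the hyperbolic direction, so both are real-analytic on one-sided neighbourhoods of the origin with explicit asymptotics.

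The bookkeeping is organised by \emph{natural levels}: each crossing of a semihyperbolic saddle from the centre direction lowers the level by one (composing with an $\exp$-like map), each crossing from the hyperbolic direction raises it by one (composing with a $\log$-like map). The hypothesis $D(\gamma,1) = 0$ asserts exactly that the net change of level around $\Gamma$ is zero, so the final composition returns to the level of the starting chart. At every intermediate stage, the partial composition should be almost regular on a standard quadratic domain of the corresponding natural level; the hyperbolic transit maps preserve this class by their defining property, and the balanced condition lets the semihyperbolic transit maps move us consistently between adjacent levels. The formal composition of the convergent Dulac series at each hyperbolic step then furnishes a convergent Dulac series as the asymptotic expansion of $\Delta$.

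The main obstacle will be the level-tracking itself: one must verify that the image of a standard quadratic domain of one natural level under a semihyperbolic transit map contains, up to the required shift, a standard quadratic domain of the next natural level, so that the compositions never exit the function classes on which analytic continuation and asymptotic control are available. Granted this, the Phragmén–Lindelöf argument on $\Omega$ underlying Ilyashenko's quasi-analyticity theorem applies to $\Delta - \id$: its asymptotic Dulac series either vanishes identically, in which case $\Delta - \id$ decays faster than any exponential on $\Omega$ and must be zero, or it does not, in which case the zeros of $\Delta - \id$ cannot accumulate at infinity in $\zeta$, i.e.\ at the origin in $z$.
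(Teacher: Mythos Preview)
Your plan is essentially the depth-$0$ argument (hyperbolic polycycles only), and it breaks down precisely at the step you flag as ``the main obstacle'' --- but not for the reason you think. The problem is not the domain-tracking; it is your claim that the final composition admits a \emph{Dulac series} as asymptotic expansion. Once the polycycle has depth $n \geq 1$, the monodromy map in the logarithmic chart is \emph{not} almost regular in the classical sense: its asymptotics involve terms of essentially different flatness scales, of the form $\alpha\bigl(\exp^{m}(g(\zeta))\bigr)$ for various $0 \leq m \leq n$ and various $g$. A single convergent Dulac series cannot capture this, and the sentence ``the formal composition of the convergent Dulac series at each hyperbolic step then furnishes a convergent Dulac series'' is simply false as soon as an $\exp$ (from a semihyperbolic crossing) has been inserted between two hyperbolic transit maps.

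What the paper does instead is exactly to build a replacement for the Dulac-series step. The Structural Theorem rewrites $\Delta^{\log}$ as a word in $\Aff$, $A^{i}\mathcal{R}^{0}$, and $A^{j}\mathcal{NC}$; the Shift, Ordering and Multiplicative Decomposition Theorems reorganise that word so that the Additive Decomposition Theorem applies, yielding a \emph{finite} sum
\[
\Delta^{\log} = a + \phi_{1} + \cdots + \phi_{v} + \psi_{1} + \cdots + \psi_{w}
\]
with each summand living in a distinct class $\mathcal{F}^{m}_{i\bar{g}}$ and the classes strictly ordered by level and then by Archimedean size. Axiom $A1$ (upper \emph{and} lower bounds) then plays the role your Dulac series would have played: the leading nonzero term dominates all the others and forces $\Delta^{\log}-\id$ to be eventually sign-definite, unless every term vanishes. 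Proving $A1$ in the convergent case is where the actual analytic work (controllable functions, standard domains, Proposition~\ref{PropositionUpperBoundA1}, and the Phragm\'en--Lindel\"of input of Theorem~\ref{ThmPL2}) is spent. Your Phragm\'en--Lindel\"of invocation at the end is morally in the right place, but it has to be applied to each functional-cochain summand separately, not to a putative Dulac remainder that does not exist.
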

	
	The advantage of this method over the ones mentioned in \cite{Kaiser2017AnalyticCO} and \cite{Moussu1991} is in essence an explicit quasi-analyticity estimate, which is new to this approach:\\
	
	To state this let us start with introducing a new coordinate, the logarithmic chart, which essentially unwinds the branch cut of the logarithm necessary to talk about maps such as $\frac{1}{-\ln(z)}$, it being the inverse of $e^{-\frac{1}{z}}$:
	
	\begin{Definition}
		Let $\dot{\mathbb{C}}$ be $\mathbb{C}$ with a logarithmic branch cut at the origin, let $g: (\dot{\mathbb{C}}, 0) \to (\dot{\mathbb{C}}, 0)$ be a germ of a map, then we define $g^{\log}$ to be $g$ in the new coordinate:
		
		\[\zeta = -\ln(z)\]
		
		more concretely:
		
		\[g^{\log}(\zeta) = -\ln\left(g\left(e^{-\zeta}\right)\right)\]
	\end{Definition}
	
	Then
	
	\begin{Proposition}\label{PropQAEst}
		Let $\Delta$ be the monodromy map of a balanced convergent polycycle of depth $\leq n$, suppose that for all $\lambda, \mu > 0$, for $\xi$ large enough real:
		
		\[|\Delta^{\log}(\xi) - \xi| < e^{-\lambda \exp^{n}(\mu \xi)}\]
		
		($n$-fold composition) then $\Delta \equiv \id$.
	\end{Proposition}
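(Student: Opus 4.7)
The plan is to exploit the natural-levels decomposition of $\Delta^{\log} - \id$ provided by the structural theorem \ref{ThmADT}, and to show that the assumed super-flatness propagates to each summand of that decomposition, forcing the whole sum to vanish term by term.

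First I would apply \ref{ThmADT} to write
\[
\Delta^{\log}(\xi) - \xi = \sum_{k=1}^{N} T_{k}(\xi),
\]
where each summand $T_{k}$ is built from transit maps of a well-defined subchain of saddles and carries a natural size of the form $\exp(-c_{k} \exp^{d_{k}}(\mu_{k} \xi))$ with $d_{k} \leq n$. Using the convergence hypothesis on the saddles (Definition \ref{DefConvSad}), each $T_{k}$ is analytic on a standard quadratic domain and admits an asymptotic expansion of Dulac type at its own natural level. The key feature I would extract is that distinct natural levels are \emph{asymptotically separated}: if $d_{j} < d_{k}$, then $T_{j}/T_{k} \to \infty$ along the positive real axis, so leading-order cancellations between different levels are impossible.

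Second, I would proceed by downward induction on the levels $d_{k}$, starting from the largest (slowest-decaying) term. Call it $T_{1}$, at level $d_{1}$. Because $T_{1}$ dominates all other $T_{k}$ asymptotically, the hypothesis $|\Delta^{\log}(\xi) - \xi| < \exp(-\lambda \exp^{n}(\mu \xi))$ for all $\lambda, \mu > 0$ transfers, after absorbing the relative errors, to the same flatness estimate on $T_{1}$ alone. Since $T_{1}$ sits in the quasi-analytic class of almost regular maps associated with depth $d_{1}$ (Ilyashenko), its natural threshold of non-triviality is precisely $\exp(-c \exp^{d_{1}}(\mu \xi))$; a flatness beyond $\exp(-\lambda \exp^{n}(\mu \xi))$ for every $\lambda, \mu$ is strictly stronger, so Phragm\'en--Lindel\"of applied in the logarithmic chart forces $T_{1} \equiv 0$. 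I then subtract $T_{1}$, re-apply the same argument to the next term, and iterate; after finitely many steps every $T_{k}$ vanishes, yielding $\Delta^{\log} \equiv \id$.

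The main obstacle is the transfer step in the previous paragraph: making rigorous the claim that the global estimate on the sum restricts to the leading summand. This requires sharp two-sided size bounds for each $T_{k}$ along the positive real axis (rather than merely an upper bound from the normal form), together with a quantitative gap between consecutive natural levels. Both ingredients are exactly what the natural-levels formalism of \ref{ThmADT} is designed to produce; the convergence assumption enters to rule out oscillatory contributions from transcendentally small error terms that would otherwise spoil the Phragm\'en--Lindel\"of step. Once these size estimates are in hand, the inductive peeling is essentially a bookkeeping exercise.
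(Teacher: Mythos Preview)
Your proposal is correct and follows essentially the same route as the paper: decompose $\Delta^{\log}-\id$ via the Additive Decomposition Theorem, use the two-sided size bounds (axiom $A1$, whose lower bound is ultimately secured by Phragm\'en--Lindel\"of on the analytic continuation of each $FC^{m}$ element to a standard domain) to isolate the leading summand, and peel off terms one by one. The paper packages the Phragm\'en--Lindel\"of step as ``$\alpha\in FC^{m}$ smaller than every exponential $\Rightarrow \alpha\equiv 0$'' and then invokes $ADT$ together with the linear growth bounds of Proposition~\ref{PropSupplAxioms}, but this is exactly your transfer-and-kill inductive scheme.
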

	
	\begin{Note}
		While this has not been done in \cite{Kaiser2017AnalyticCO} it is no doubt possible with careful study of the appearing transseries, however here we give a geometric reason which can not be captured by \cite{Kaiser2017AnalyticCO} because it requires techniques to carefully control domains of functions involved.
	\end{Note}
	
	\subsection{Notations and strategy of proof}
	
	First of all we make the convention that $f^{k}(x)$ or $f^{k}$ is the $k$-fold composition of $f$, $f(x)^{k}$ or $(f)^{k}$ is $f$ multiplied pointwise with itself $k$ times.\\
	
	Then we will need a few small baseline definitions. 
	
	\begin{Definition}
		We define the class $\Aff$ to be the class of affine real analytic functions with positive derivative, i.e. the functions:
		
		\[\zeta \mapsto \alpha\zeta + \beta \quad,\quad \alpha, \beta \in \mathbb{R}, \alpha > 0\]
	\end{Definition}
	
	\begin{Definition}
		Let $g, h$ be maps, then we define:
		
		\[\Ad(g)(h) = g^{-1}\circ h \circ g\]
		
		And particularly we define:
		
		\[A(h) = \Ad(\exp)(h) = \ln \circ h \circ \exp\]
		
		Given germs of real functions $(g_{i})_{i \in I}$ near infinity, then $\langle g_{i}, i \in I\rangle$ is the group generated by the $g_{i}$ under composition (when this makes sense).
	\end{Definition}
	
	This formalism then depends on two classes of functions:
	
	\begin{Definition}
		Let $\mathcal{R}$ and $\mathcal{NC}$ be two sets of germs of real functions near $+\infty$. We say that they are admissible if:
		
		\begin{enumerate}
			\item{For each germ of an analytic function $f$ at $0$, $f^{\log}\in \mathcal{R}$.}
			\item{$\Aff \subset \mathcal{R}$.}
			\item{For each $\beta \in \mathbb{R}$ the function given by:
				
				\[\zeta \mapsto \zeta + \ln\left(1 + \beta e^{\zeta}\right)\]
				
				is in $\mathcal{R}$.}
			\item{For all elements $f$ of $\mathcal{R}$, either $f \in \Aff$ or there exists $\lambda > \mu > 0$ and an $a \in \Aff$ such that for all $x$ large enough:
				
				\[e^{-\lambda x} < |f(x) - a(x)| < e^{-\mu x}\]
				
				The set of $\mathcal{R}$ for which this $a$ is the identity together with the identity itself is denoted $\mathcal{R}^{0}$.}
			\item{The same holds for $\mathcal{NC}$, but $a$ is always the identity and $\mathcal{NC} \cap \Aff = \{\id\}$.}
			\item{$\mathcal{R}$ forms a group under composition.}
		\end{enumerate}
		
		We call a hyperbolic saddle $(\mathcal{R}, \mathcal{NC})$-admissible along a given quadrant if there exists some analytic sections with respect to which the Dulac map of the saddle in the logarithmic chart is given by an element of $\mathcal{R}$.\\
		
		We call a semihyperbolic saddle $(\mathcal{R}, \mathcal{NC})$-admissible along a given quadrant if there exists some analytic sections with respect to which the Dulac map of the saddle in the logarithmic chart is given by an element of:
		
		\[\TO_{(\mathcal{R}, \mathcal{NC})} \coloneqq \mathcal{R}^{0} \circ \exp \circ \mathcal{R}^{0} \circ \mathcal{NC}\circ \Aff\]
		
		We call a polycycle $(\mathcal{R}, \mathcal{NC})$-admissible if every saddle in it is $(\mathcal{R}, \mathcal{NC})$-admissible.\\
		
		When it is clear from context we will drop $(\mathcal{R}, \mathcal{NC})$ from the notation.
	\end{Definition}
	
	\begin{Remark}
		This is an abstract generalization of the framework used in \cite{ilyashenkoFiniteness} in order to prove finiteness for polycycles. The idea being that if you understand normalization of certain classes of saddles, you will be able to use that to prove finiteness theorems for classes of polycycles with those saddles. For the original version of this, see Subsection \ref{SubsecOrigin}.\\
		
		In this article we will apply the framework to the set of convergent saddles and convergent saddle nodes.
	\end{Remark}
	
	All our bookkeeping will then be summed up in what we call the sufficiency of axioms. Because the full statement is rather long and terse we will instead write here a shortened version and write about the intent of the axioms. Later we will write a full version.
	
	\begin{Theorem}[Sufficiency of axioms]\label{ThmSuffAx}
		Suppose for an admissible choice of $(\mathcal{R}, \mathcal{NC})$ there exists subsets $\mathcal{FC}_{i}^{n}$ of germs of real analytic functions at infinity satisfying the following axioms:
		
		\begin{enumerate}
			\item{Axiom A1: For all $n$ and $i = 0, 1$, for any non-zero element $\alpha \in \mathcal{FC}^{n}_{i}$ there exist $\lambda < \mu < 0$ such that:
				
				\[e^{\lambda x} < \alpha < e^{\mu x}\]}
			\item{Axiom A2: For all $n$:
				\begin{enumerate}
					\item[A2a]{$A^{n}\mathcal{R}^{0} \subset \id + \mathcal{F}^{n}_{0\id}$}
					\item[A2b]{$A^{n}\mathcal{NC} \subset \id + \mathcal{F}^{n}_{1\id}$}
			\end{enumerate}}
			\item{Axioms $A3-A8$ implying that $\mathcal{FC}^{n}_{i}$ is closed under certain constructions and operations.}
		\end{enumerate}
		
		Then all $(\mathcal{R}, \mathcal{NC})$-admissible polycycles have a (one-sided) neighbourhood without limit cycles.
	\end{Theorem}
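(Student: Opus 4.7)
The plan is to reduce the monodromy map of a balanced $(\mathcal{R},\mathcal{NC})$-admissible polycycle to an expression of the form $\id + \varphi$ with $\varphi$ controlled by the class $\mathcal{FC}^n_i$ after iterated conjugation by $\exp$, and then invoke the quasi-analyticity bound of Proposition \ref{PropQAEst} to conclude the dichotomy. First I would dispose of the non-balanced case: if $D(\gamma,1)\ne 0$ the monodromy is flat or inverse-to-flat by the normal-form argument already cited, so only finitely many limit cycles can accumulate on $\Gamma$. So assume $\Gamma$ is balanced of depth $n \ge 0$.

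Next I would write the monodromy $\Delta^{\log}$ in the logarithmic chart as an ordered composition of transit maps around the cycle. Hyperbolic saddles contribute factors from $\mathcal{R}$, semihyperbolic ones contribute factors from
\[
\TO_{(\mathcal{R},\mathcal{NC})} = \mathcal{R}^{0}\circ\exp\circ\mathcal{R}^{0}\circ\mathcal{NC}\circ\Aff,
\]
and the regular transitions between chosen analytic sections contribute elements of $\Aff$. The depth $n$ records the maximum net number of nested $\exp$-factors that appear: each semihyperbolic saddle entered from the center direction contributes an $\exp$, each one entered from the hyperbolic direction contributes a $\ln$ (through an inverse), and balance forces the total count to be zero while depth $n$ bounds the maximum partial sum.

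Now I would apply the operator $A = \Ad(\exp) = \ln \circ (\cdot) \circ \exp$ to the monodromy $n$ times. The key point is that axiom A2 says $A^n\mathcal{R}^0 \subset \id + \mathcal{FC}^n_{0\id}$ and $A^n\mathcal{NC} \subset \id + \mathcal{FC}^n_{1\id}$, so each individual $\mathcal{R}^0$- and $\mathcal{NC}$-factor entering the monodromy is, after $n$ applications of $A$, an identity-perturbation by an element of $\mathcal{FC}^n$. The role of axioms A3--A8 is then to guarantee that these individual perturbations can be combined through the compositional structure of $\Delta^{\log}$ (involving inner compositions, sums, and conjugations by elements of $\Aff$ and $\mathcal{R}$) without leaving the class $\mathcal{FC}^n_i$. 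The balance condition is precisely what ensures that, after stripping off $n$ layers of $\exp$/$\ln$, no uncancelled $\exp$-factor remains and one genuinely lands in the identity-perturbation regime rather than in a transseries with a dominant exponential term.

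Once $A^n\Delta^{\log} = \id + \varphi$ with $\varphi \in \mathcal{FC}^n_i$, axiom A1 forces the trivial dichotomy: either $\varphi \equiv 0$, whence $\Delta \equiv \id$ (case 1 of Theorem \ref{ThmFinLim}); or $\varphi$ is bounded below in absolute value by $e^{\lambda x}$ for some $\lambda<0$ at large $x$. Unwinding $A^n$ transports this lower bound to an estimate of the form $|\Delta^{\log}(\xi)-\xi| > e^{-\lambda'\exp^{n}(\mu\xi)}$ for suitable $\lambda',\mu > 0$. Contraposing Proposition \ref{PropQAEst}, which would make $\Delta\equiv\id$, we conclude that near the origin $\Delta - \id$ has no zeros other than $0$ itself, yielding case 2 and completing the absence of limit cycles on one side of $\Gamma$. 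The principal obstacle is the bookkeeping in the middle step: verifying that axioms A3--A8 really encode all the closure properties needed to follow every sum, product, composition and $\Aff$-conjugation that arises in the expanded monodromy through $n$ applications of $A$ and still produce an element of $\mathcal{FC}^n_i$; this is where the combinatorics of the balanced polycycle and the cancellation between $\exp$'s and $\ln$'s has to be executed carefully.
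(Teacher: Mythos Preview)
Your proposal has a genuine structural gap. The reduction you sketch, namely applying $A^{n}$ to the whole monodromy and claiming $A^{n}\Delta^{\log}=\id+\varphi$ with $\varphi$ in a \emph{single} class $\mathcal{FC}^{n}_{i}$, does not work. Observe that $A(\exp)=\exp$ and $A(\ln)=\ln$, so applying $A^{n}$ to the composition leaves all the interior $\exp$/$\ln$ factors untouched; it only replaces each $\mathcal{R}^{0}$- or $\mathcal{NC}$-factor $f$ by $A^{n}f$. The monodromy of a depth-$n$ polycycle contains $\mathcal{R}^{0}$-factors sitting at \emph{every} depth $0,1,\dots,n$ (this is the content of the Structural Theorem, which puts $\Delta^{\log}$ in $\langle \Aff, A^{i}\mathcal{R}^{0}, A^{j}\mathcal{NC}\mid 0\le i\le n,\ 0\le j\le n-1\rangle$). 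A factor at depth $0$ contributes a perturbation of size $e^{-\lambda x}$, while one at depth $n$ contributes a perturbation of size $e^{-\lambda\exp^{n}(x)}$; these live in genuinely different classes $\mathcal{F}^{0}_{i\cdot},\dots,\mathcal{F}^{n}_{i\cdot}$ and axioms A3--A8 do \emph{not} allow you to collapse them into one $\mathcal{FC}^{n}_{i}$. Balance only guarantees that the outermost $\exp$/$\ln$ cancel, not that all intermediate depths vanish.

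What the paper actually does is keep track of all these levels simultaneously. Via a long induction (Shift Lemmas, Conjugation Lemmas, Ordering Theorem) one proves the Multiplicative and then the Additive Decomposition Theorem: $\Delta^{\log}=a+\phi_{1}+\cdots+\phi_{v}+\psi_{1}+\cdots+\psi_{w}$ with $a\in\Aff$, each $\phi_{k}\in\mathcal{F}^{m_{k}}_{1\bar f_{k}}$ for some $m_{k}<n$, each $\psi_{j}\in\mathcal{F}^{n}_{0\bar g_{j}}$, and the pairs $(m_{k},\bar f_{k})$ strictly increasing in a total order. Axiom A1, applied at each level $m_{k}$, then shows that the first nonzero summand dominates all subsequent ones, giving the dichotomy directly. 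Your invocation of Proposition~\ref{PropQAEst} is also circular: that estimate is a \emph{consequence} of this machinery (via the domain analysis of Section~3), not an input to it.
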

	
	Roughly speaking, the idea is to rearrange the elements appearing in the transit map of the polycycle. The picture looks particularly nice when all semihyperbolic saddles are in their formal normal form with formal constant $0$ and $k = 1$. The transit map of a polycycle can be represented by a diagram as follows:
	
	\begin{figure}[H]
		\begin{tikzpicture}
			\draw[help lines, color=gray!30, dashed] (0,-4.9) grid (9.9,1);
			\draw[->,ultra thick] (0,0)--(10,0) node[right]{$x$};
			\draw[->,ultra thick] (0,-5)--(0,1) node[above]{$y$};
			\draw[blue](0, 0)--(1, -1);
			\draw[blue](1, -1)--(2, -2);
			\draw[blue](2, -2)--(3,-1);
			\draw[blue](3, -1)--(4, -1);
			\draw[blue](4, -1)--(5, -2);
			\draw[blue](5, -2)--(6, -3);
			\draw[blue](6, -3)--(7, -3);
			\draw[blue](7, -3)--(8, -2);
			\draw[blue](8, -2)--(9, -1);
			\draw[blue](9, -1)--(10, 0);
		\end{tikzpicture}
		
		\caption{Depth of a polycycle}
	\end{figure}
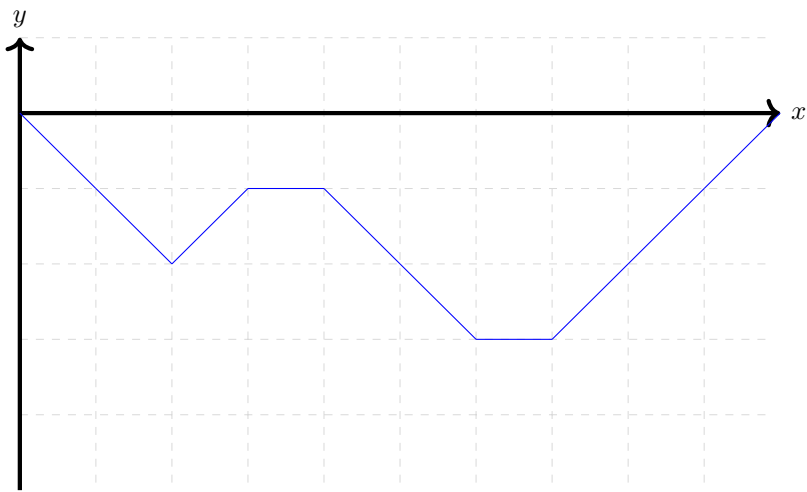
	
	We go down by one when we encounter a semihyperbolic saddle going to the centre manifold, we go up by one when we encounter a semihyperbolic saddle going from the centre manifold and we stay constant when we encounter a hyperbolic saddle. In the case mentioned above, going down means a pure $\exp$ (in the logarithmic chart) and going up means a pure $\ln$. The idea put forward in the Structural Theorem will be to add the appropriate amount of $\exp$ and $\ln$ in-between to essentially add the following lines to the diagram:
	
	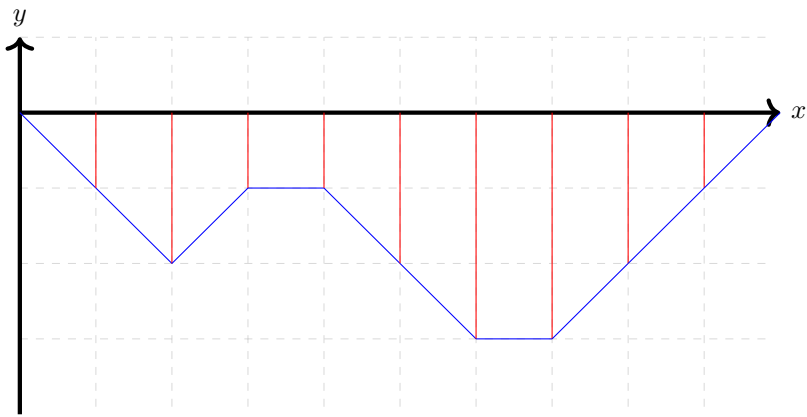
\begin{figure}[H]
		\begin{tikzpicture}
			\draw[help lines, color=gray!30, dashed] (0,-3.9) grid (9.9,1);
			\draw[->,ultra thick] (0,0)--(10,0) node[right]{$x$};
			\draw[->,ultra thick] (0,-4)--(0,1) node[above]{$y$};
			\draw[blue](0, 0)--(1, -1);
			\draw[red](1, -1)--(1, 0);
			\draw[blue](1, -1)--(2, -2);
			\draw[red](2, -2)--(2, 0);
			\draw[blue](2, -2)--(3,-1);
			\draw[red](3, -1)--(3, 0);
			\draw[blue](3, -1)--(4, -1);
			\draw[red](4, -1)--(4, 0);
			\draw[blue](4, -1)--(5, -2);
			\draw[red](5, -2)--(5, 0);
			\draw[blue](5, -2)--(6, -3);
			\draw[red](6, -3)--(6, 0);
			\draw[blue](6, -3)--(7, -3);
			\draw[red](7, -3)--(7, 0);
			\draw[blue](7, -3)--(8, -2);
			\draw[red](8, -2)--(8, 0);
			\draw[blue](8, -2)--(9, -1);
			\draw[red](9, -1)--(9, 0);
			\draw[blue](9, -1)--(10, 0);
		\end{tikzpicture}
		
		\caption{Splitting up the map}
	\end{figure}
	
	The idea is to rearrange all these blocks, note that the fest block is non-trivial because it still has the transit map between saddles. The way we rearrange is to put the blocks with smallest height first and then go to blocks with larger and larger height, so this particular map would rearrange as follows:
	
	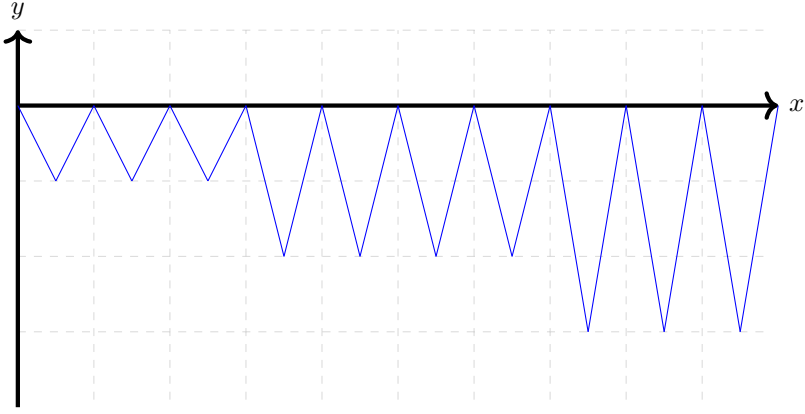
\begin{figure}[H]
		\begin{tikzpicture}
			\draw[help lines, color=gray!30, dashed] (0,-3.9) grid (9.9,1);
			\draw[->,ultra thick] (0,0)--(10,0) node[right]{$x$};
			\draw[->,ultra thick] (0,-4)--(0,1) node[above]{$y$};
			\draw[blue](0, 0)--(0.5, -1);
			\draw[blue](0.5, -1)--(1, 0);
			\draw[blue](1, 0)--(1.5, -1);
			\draw[blue](1.5, -1)--(2, 0);
			\draw[blue](2, 0)--(2.5, -1);
			\draw[blue](2.5, -1)--(3, 0);
			\draw[blue](3, 0)--(3.5, -2);
			\draw[blue](3.5, -2)--(4, 0);
			\draw[blue](4, 0)--(4.5, -2);
			\draw[blue](4.5, -2)--(5, 0);
			\draw[blue](5, 0)--(5.5, -2);
			\draw[blue](5.5, -2)--(6, 0);
			\draw[blue](6, 0)--(6.5, -2);
			\draw[blue](6.5, -2)--(7, 0);
			\draw[blue](7, 0)--(7.5, -3);
			\draw[blue](7.5, -3)--(8, 0);
			\draw[blue](8, 0)--(8.5, -3);
			\draw[blue](8.5, -3)--(9, 0);
			\draw[blue](9, 0)--(9.5, -3);
			\draw[blue](9.5, -3)--(10, 0);
		\end{tikzpicture}
		
		\caption{Rearranging the map}
	\end{figure}
	
	The way we do this is by conjugation, so if we had an $f$ which is a stand in for a spike of height $2$ and an $g$ for a spike of height $3$ and they occurred in the order $g \circ f$, i.e. the wrong order, then we would rewrite this as $f \circ (f^{-1} \circ g \circ f)$ and we would want $f^{-1} \circ g \circ f$ to have the same behaviour as another spike of height $3$. Then once the spikes are rearranged like this the spikes of the same height can also be ordered by conjugation. Then we Taylor expand out, so say we had $f \circ g$ (so the right order), then we write $f = \id + \delta$ and $g = \id + \epsilon$ and we Taylor expand out to:
	
	\[f \circ g = \id + \delta + \epsilon + \tilde{\epsilon}\]
	
	With $\tilde{\epsilon}$ containing the higher orders of the Taylor expansion. Now let us take a look at what this height means and we claim that $\delta$ should be much larger than both $\epsilon$ and $\tilde{\epsilon}$. Let us consider a function $\id + \beta$ with $\beta$ small, then by Taylor expansion:
	
	\[\Ad(g)(\id + \beta) = \id + ((g^{-1})' \circ g)\cdot \beta\circ g + O((\beta \circ g)^{2})\]
	
	In particular a map of depth $n$ can be written roughly as:
	
	\[A^{n}(\id + \beta) = \id + \left(\prod_{k = 1}^{n}\frac{1}{\exp^{k}}\right)\cdot \beta\circ \exp^{n} + O((\beta\circ \exp^{n})^{2})\]
	
	Remembering that $\exp$ in the logarithmic chart is just the standard flat function, this means that the depth of a spike corresponds to the level of flatness. So when we Taylor expand out we get on different levels contributions of different levels of flatness, so they do not interfere with each other. In the proof of the sufficiency of axioms we will use this to decompose any transit map as a sum of terms which are `of different sizes' (the Additive Decomposition Theorem) and then we use the usual argument saying that if we look at the difference with the identity we either have that it is identically zero, or this decomposition will have a leading term, preventing limit cycles close to the polycycle (on one side).\\
	
	\begin{Remark}\label{RemA1Bounds}
		This is also why axiom $A1$ contains both a lower and upper bound. Clearly the lower bound is needed for non oscillation, but the upper and lower bounds also make sure that the 'different levels of the spikes are separated'. More concretely a straightforward calculation shows that if $A1$ holds, then for $g, h \in G^{n - 1}$:
		
		\[\mathcal{FC}^{n}_{i} \circ \exp^{n} \circ g = \mathcal{FC}^{n}_{i} \circ \exp^{n} \circ h \Rightarrow c < \frac{\exp^{n}(g(x))}{\exp^{n}(h(x))} < C\]
		
		for some $x$ large enough and $c, C > 0$. The inverse implication follows by axiom $A4a$ (see Theorem \ref{ThmSuffAxFull}).
	\end{Remark}
	
	We have two principal results:
	
	\begin{enumerate}
		\item{Sufficiency of axioms.}
		\item{The convergent case.}
	\end{enumerate}
	
	We will give each of these results their own section.\\
	
	Afterwards we will collect some remarks regarding what is still left in \cite{ilyashenkoFiniteness}.\\
	
	\section{Sufficiency of axioms}
	
	For this entire section we fix a choice of admissible $(\mathcal{R}, \mathcal{NC})$. We will not talk about multiple admissible choices so in all cases we will omit any extra notation.
	
	\subsection{The Structural Theorem}
	
	The goal of this part is to prove:
	
	\begin{Theorem}[Structural Theorem]
		Let $G_{n}$ be the set of monodromy maps of depth $n$ balanced admissible polycycles. We have:
		
		\[G_{n}^{\log} \subset \langle \Aff, A^{i}\mathcal{R}^{0}, A^{j}\mathcal{NC}\mid 0 \leq i \leq n, 0 \leq j \leq n - 1\rangle\]
	\end{Theorem}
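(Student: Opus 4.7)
The plan is to express $\Delta^{\log}$ explicitly as a word along $\Gamma$ and then translate every factor to its canonical ``depth'' by conjugation with $\exp$.

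First I would traverse $\Gamma$ and decompose
\[
\Delta^{\log}=C_N\circ T_N\circ C_{N-1}\circ T_{N-1}\circ\cdots\circ T_1\circ C_0,
\]
where $T_k$ is the log-chart Dulac map of the $k$-th saddle and $C_k$ is the log-chart of the regular (analytic) transition between two consecutive sections. By axiom~1 every $C_k\in\mathcal{R}$; by admissibility every hyperbolic $T_k\in\mathcal{R}$, and every semihyperbolic $T_k$ lies in $\TO$ or $\TO^{-1}$ depending on whether it decreases or increases $D$. Using axiom~4 to rewrite $\mathcal{R}=\mathcal{R}^{0}\circ\Aff$, and expanding $\TO=\mathcal{R}^{0}\circ\exp\circ\mathcal{R}^{0}\circ\mathcal{NC}\circ\Aff$ and its inverse, this turns $\Delta^{\log}$ into a long alternating word in the alphabet $\{\Aff,\,\mathcal{R}^{0},\,\mathcal{NC},\,\exp^{\pm 1}\}$, with the tautological property that the net signed count of $\exp$'s between the right end and any intermediate position of the word equals $-D(\gamma,t)$ at the corresponding moment $t$ along $\Gamma$.

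Next I would apply the commutation $g\circ\exp=\exp\circ A(g)$ and its inverse repeatedly to push every non-$\exp^{\pm 1}$ letter to ``level $0$''. A letter sitting in the word at a moment where $D(\gamma,t)=-k$ is sandwiched between $k$ as-yet-uncancelled $\exp$'s on its right and $k$ uncancelled $\ln$'s on its left, so after sliding it becomes $A^{k}$ applied to its original class. Since $\Gamma$ is balanced, all $\exp^{\pm 1}$'s eventually cancel, leaving $\Delta^{\log}$ as a composition drawn from $\Aff$, $A^{i}\mathcal{R}^{0}$ and $A^{j}\mathcal{NC}$. Because $-D(\gamma,t)\in\{0,\dots,n\}$, any $\mathcal{R}^{0}$-letter yields $i\le n$. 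For $\mathcal{NC}$ note that in $\TO$ the $\mathcal{NC}$-letter sits on the input side of the inner $\exp$: a down-transit entering at depth $-k$ places its $\mathcal{NC}$ at depth $-k$, but such a transit actually descends to depth $-(k+1)$, forcing $k+1\le n$ and hence $j\le n-1$; up-transits are symmetric. The residual $A^{m}(\Aff)$ factors are then absorbed by the recursive reduction $A^{m}(\Aff)\subset\langle\Aff,\,A^{i}\mathcal{R}^{0}\mid i<m\rangle$, which uses axiom~3 (guaranteeing $A(\Aff)\subset\mathcal{R}$).

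The hard part will be the bookkeeping of the sliding step. I expect to spend most of the work verifying that contiguous blocks of hyperbolic-transit and connector letters located at depth $-k$ can be fused inside $\mathcal{R}$ and then moved to level~$0$ as elements of $A^{k}(\mathcal{R})\subset A^{k}(\Aff)\cdot A^{k}(\mathcal{R}^{0})$ without overshooting past depth $n$; that the $\mathcal{NC}$-letter inside each semihyperbolic transit truly lies one level shallower than the spike it opens, so that only $A^{j}\mathcal{NC}$ with $j\le n-1$ appears; and that $A(\Aff)$ stays inside the target group at every conjugation step via axiom~3. A straightforward induction on the total variation of $D(\gamma,\cdot)$ should handle all three simultaneously and yield $\Delta^{\log}\in\langle\Aff,\,A^{i}\mathcal{R}^{0},\,A^{j}\mathcal{NC}\mid 0\le i\le n,\ 0\le j\le n-1\rangle$.
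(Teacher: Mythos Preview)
Your proposal is correct and follows essentially the same approach as the paper: decompose the monodromy map along the polycycle, conjugate each piece by the appropriate power of $\exp$ according to the depth $-D(\gamma,t)$, and then absorb the stray $A^{m}(\Aff)$ factors using axiom~3 (i.e.\ $A(\Aff)\subset\mathcal{R}$). The only organisational difference is that the paper first packages consecutive pieces into blocks of the form $A^{k}(\FROM\circ\mathcal{R}\circ\TO)$ with $k\le n-1$ (its first Lemma) and then analyses a single such block at level~$0$ via the identity $\Aff\circ\exp\subset\exp\circ\mathcal{R}^{0}\circ\Aff$ (its second Lemma), whereas you slide individual letters and postpone all the $A^{m}(\Aff)$ reductions to the end; both routes use the same key facts and yield the same bounds $i\le n$, $j\le n-1$.
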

	
	We will split up the full structural Theorem into two Lemma's:
	
	\begin{Lemma}
		We have:
		
		\[G_{n}^{\log} \subset \langle A^{k}(\FROM\circ \mathcal{R}\circ \TO), \mathcal{R} \mid 0 \leq k \leq n - 1\rangle\]
		
		with $\FROM$ defined as the inverse maps of $\TO$.
	\end{Lemma}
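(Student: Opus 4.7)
I would prove this by induction on the depth $n$ of the polycycle.

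For the base case $n=0$ only hyperbolic saddles appear, so $\Delta^{\log}$ is a finite composition of Dulac maps and transit maps, all of which lie in $\mathcal{R}$ by admissibility; since $\mathcal{R}$ is a group, $\Delta^{\log}\in \mathcal{R}$.

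For the inductive step I would write
$$\Delta^{\log} = r_p \circ s_p \circ r_{p-1} \circ \cdots \circ s_1 \circ r_0,$$
grouping the Dulac maps of hyperbolic saddles with the surrounding transit maps so that each $r_i \in \mathcal{R}$ and each $s_i$ is the Dulac map of a semihyperbolic saddle, belonging either to $\TO$ or to $\FROM$. Balance of the polycycle forces equal numbers of $\TO$'s and $\FROM$'s, and depth $n$ means that, read in walking order with $\TO$ as an open parenthesis and $\FROM$ as a close parenthesis, the resulting word is a Dyck-type path of nesting depth exactly $n$. I would then locate an innermost matching pair, i.e.\ a pair of consecutive-in-walking-order entries $\TO, \FROM$ with only an $\mathcal{R}$-block between them; in composition order this reads $\FROM \circ r_j \circ \TO \in \FROM \circ \mathcal{R} \circ \TO$.

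Let $B$ denote the chunk of the composition strictly to the right of this pair, and $L$ the chunk strictly to the left. The Dyck structure forces $B$ to contain exactly $n-1$ unmatched $\TO$'s on top of a balanced sub-word. Using $\TO = \mathcal{R}^{0}\circ \exp\circ \mathcal{R}^{0}\circ \mathcal{NC}\circ \Aff$ (and $\FROM$ its inverse), one peels off the dominant $\exp$-factor from each unmatched $\TO$, collecting the $n-1$ of them into $\exp^{n-1}$ and pushing the correction germs in $\mathcal{R}^{0}, \mathcal{NC}, \Aff$ into the neighbouring $r_i$'s using that $\mathcal{R}$ is a group and contains $\mathcal{R}^{0}\cup\Aff$. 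This gives $B = \tilde B \circ \exp^{n-1}$ with $\tilde B\in \mathcal{R}$, and hence
$$\Delta^{\log} \;=\; (L\circ B)\circ \Ad(B^{-1})\bigl(\FROM\circ r_j\circ \TO\bigr) \;=\; (L\circ B)\circ \Ad(\exp^{-(n-1)}\circ \tilde B^{-1})\bigl(\FROM\circ r_j\circ \TO\bigr).$$
The inner conjugation by $\tilde B^{-1}$ can again be absorbed into the adjacent $\mathcal{R}$-factors of the inner $\FROM\circ\mathcal{R}\circ\TO$, so that the second factor becomes an element of $A^{n-1}(\FROM\circ\mathcal{R}\circ\TO)$. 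The remaining product $L\circ B$ is the monodromy of the balanced polycycle obtained by collapsing the chosen innermost pair (with the collected correction terms reabsorbed), which has depth at most $n-1$; by the induction hypothesis it lies in $\langle \mathcal{R}, A^k(\FROM\circ\mathcal{R}\circ\TO)\mid 0\leq k\leq n-2\rangle$. Combining the two factors yields the desired containment.

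\textbf{Main obstacle.} The delicate step is not the parenthesis-matching combinatorics but the \emph{absorption} of the correction germs. Because $\TO$ is only $\mathcal{R}^{0}\circ\exp\circ \mathcal{R}^{0}\circ\mathcal{NC}\circ\Aff$ and $\FROM$ its inverse, each time we wish to treat $\TO$ as a pure $\exp$ we create $\mathcal{R}^{0}$-, $\mathcal{NC}$- and $\Aff$-leftovers; the bookkeeping needed to shuffle all of these into the $\mathcal{R}$-blocks on either side, while preserving the shape $\FROM\circ\mathcal{R}\circ\TO$ of the inner spike, is where the full admissibility hypotheses (in particular the group property of $\mathcal{R}$ and the inclusion $\mathcal{R}^{0}\cup\Aff\subset \mathcal{R}$) are essential. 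Formalising this shuffle uniformly over all positions in the word is the technical heart of the argument.
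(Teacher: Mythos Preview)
Your inductive spike-peeling strategy has a genuine gap at the step where you claim $B=\tilde B\circ\exp^{\,n-1}$ with $\tilde B\in\mathcal{R}$. This fails for three independent reasons. First, to ``collect'' the $n-1$ exponentials to the right you must drag each intermediate $\mathcal{R}$-block across one or more $\exp$'s, i.e.\ replace $r$ by $A^{-j}(r)=\exp^{j}\circ r\circ\ln^{j}$; but $A^{-j}(\mathcal{R})\not\subset\mathcal{R}$ in general (the paper only gets $A\Aff\subset\mathcal{R}$, nothing more). Second, each unmatched $\TO$ carries an $\mathcal{NC}$-factor, and $\mathcal{NC}$ is \emph{not} assumed to sit inside $\mathcal{R}$, so those leftovers cannot be ``pushed into the neighbouring $r_i$'s''. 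Third, if the innermost pair you pick is at the bottom level $-n$ (which is what you need for the count $n-1$ to be correct), then $B$ will in general contain entire balanced sub-words---matched $\TO/\FROM$ pairs with their own $\mathcal{R}$-blocks---and these are certainly not in $\mathcal{R}$. Consequently the subsequent claim that $\Ad(\tilde B^{-1})$ preserves the shape $\FROM\circ\mathcal{R}\circ\TO$ also collapses, and $L\circ B$ is not literally the monodromy of a depth $\le n-1$ polycycle.

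The paper avoids all of this with a one-line, non-inductive trick: at every node of the depth diagram, at current depth $-k$, insert the identity $\ln^{k}\circ\exp^{k}$ and then regroup so that each individual segment $s$ (running from depth $-a$ to depth $-b$) becomes $\ln^{b}\circ s\circ\exp^{a}$. Since $\exp\in\TO$ and $\ln\in\FROM$, a horizontal $s\in\mathcal{R}$ at depth $-k$ gives $A^{k-1}(\ln\circ s\circ\exp)\in A^{k-1}(\FROM\circ\mathcal{R}\circ\TO)$ (or just $s\in\mathcal{R}$ if $k=0$), a downward $s\in\TO$ from $-k$ to $-(k+1)$ gives $A^{k}(\ln\circ\id\circ s)$, and dually for $\FROM$; in every case $k\le n-1$. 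No shuffling of $\mathcal{NC}$ or $\mathcal{R}$ past exponentials is ever required.
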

	
	\begin{proof}
		A transit map of a polycycle can be decomposed as:
		
		\begin{enumerate}
			\item{Analytic transit maps between different equilibria, which in the logarithmic chart are in $\mathcal{R}$.}
			\item{Transit maps of hyperbolic saddles which in the logarithmic chart are in $\mathcal{R}$.}
			\item{Transit maps of semihyperbolic saddles which in the logarithmic chart are in either $\FROM$ or $\TO$.}
		\end{enumerate}
		
		The point is to put 'the right amount' of $\exp$ and $\ln$ between any two of the above classes, which depends on the depth at the moment. What the 'right amount' is, is perhaps best demonstrated visually using a diagram from \cite{ilyashenkoFiniteness}. So, assuming we start at a admissible point in the polycycle, i.e. such that the depth stays negative, we can make a diagram of the functions we encounter in order, roughly graphing out the depth. Going from left to right, a horizontal line represents an element of $\mathcal{R}$, a diagonally downward line represents an element of $\TO$ and a diagonally upward line represents an element of $\FROM$. Then one for example has a diagram as follows (note that this does not actually represent the maps of a polycycle because we ignore maps of class 1):\\
		
		\begin{figure}[H]
			\begin{tikzpicture}
				\draw[help lines, color=gray!30, dashed] (0,-4.9) grid (9.9,1);
				\draw[->,ultra thick] (0,0)--(10,0) node[right]{$x$};
				\draw[->,ultra thick] (0,-5)--(0,1) node[above]{$y$};
				\draw[blue](0, 0)--(1, -1);
				\draw[blue](1, -1)--(2, -2);
				\draw[blue](2, -2)--(3,-1);
				\draw[blue](3, -1)--(4, -1);
				\draw[blue](4, -1)--(5, -2);
				\draw[blue](5, -2)--(6, -3);
				\draw[blue](6, -3)--(7, -3);
				\draw[blue](7, -3)--(8, -2);
				\draw[blue](8, -2)--(9, -1);
				\draw[blue](9, -1)--(10, 0);
			\end{tikzpicture}
			
			\caption{Depth of a polycycle}
		\end{figure}
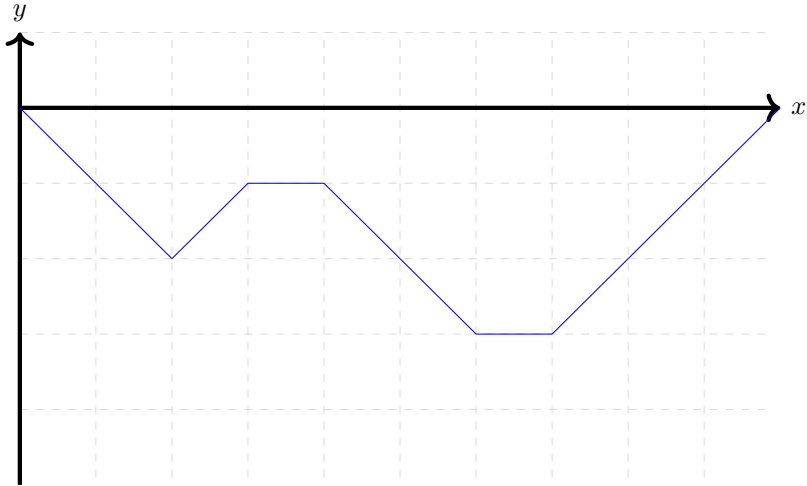

		So here each segment stands for a map of one of the three classes, noting that $\exp \in \TO$ and $\ln \in \FROM$, the amount of times one needs to precompose with $\exp$ is the negative height at the start of the segment and the amount of times one needs to postcompose with $\ln$ is the negative height at the end of the segment.\\
		
		It is then graphically obvious that the amount of $\exp$ and $\ln$ will cancel each other out and we are left with the same map, together with the remark that $\exp \in \TO$ and $\ln \in \FROM$ we have put each of these maps in some $A^{k}(\FROM\circ \mathcal{R}\circ \TO)$, $k$ being smaller than the largest negative height, which is the depth.
	\end{proof}
	
	The second Lemma is:
	
	\begin{Lemma}
		We have:
		
		\[\FROM\circ \mathcal{R}\circ \TO \subset \langle \Aff, \mathcal{R}^{0}, A\mathcal{R}^{0}, \mathcal{NC}\rangle\]
	\end{Lemma}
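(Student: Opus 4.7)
The plan is to unfold the definitions of $\TO$, $\FROM$ and $A$, reduce the problem to controlling $A\mathcal{R}$, and then invoke the admissibility axioms to lift this into the stated generators.

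\textbf{Unfolding.} Since $\TO = \mathcal{R}^{0}\circ\exp\circ\mathcal{R}^{0}\circ\mathcal{NC}\circ\Aff$, taking setwise inverses yields $\FROM = \Aff\circ\mathcal{NC}^{-1}\circ\mathcal{R}^{0}\circ\ln\circ\mathcal{R}^{0}$, using that $\Aff$ is a group and that $\mathcal{R}^{0}$ is closed under inversion (if $f = \id + \delta \in \mathcal{R}^{0}$ then $f^{-1}\in\mathcal{R}$ satisfies $f^{-1} - \id = -\delta\circ f^{-1}$, which still obeys the axiom $4$ bounds with affine part $\id$). Thus
$$\FROM\circ\mathcal{R}\circ\TO \;=\; \Aff\circ\mathcal{NC}^{-1}\circ\mathcal{R}^{0}\circ\bigl(\ln\circ(\mathcal{R}^{0}\circ\mathcal{R}\circ\mathcal{R}^{0})\circ\exp\bigr)\circ\mathcal{R}^{0}\circ\mathcal{NC}\circ\Aff,$$
and because $\mathcal{R}$ is a group containing $\mathcal{R}^{0}$, the bracketed term sits inside $A\mathcal{R}$. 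Everything therefore reduces to showing $A\mathcal{R}\subset\langle\Aff,\mathcal{R}^{0},A\mathcal{R}^{0}\rangle$.

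\textbf{Factoring $\mathcal{R} \subset \Aff\circ\mathcal{R}^{0}$.} Given $f\in\mathcal{R}$, axiom $4$ either gives $f\in\Aff$ directly, or produces a unique $a(x)=\alpha x+\beta\in\Aff$ with $e^{-\lambda x}<|f(x)-a(x)|<e^{-\mu x}$. Setting $g := a^{-1}\circ f \in \mathcal{R}$, one computes $g(x) - x = (f(x) - a(x))/\alpha$, so $g\in\mathcal{R}^{0}$ by axiom $4$ again, and $f = a\circ g$. Applying $A$ then gives $A\mathcal{R}\subset A\Aff\circ A\mathcal{R}^{0}$, so it suffices to handle $A\Aff$.

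\textbf{Handling $A\Aff$.} For $a(\zeta)=\alpha\zeta+\beta$, one computes
$$A(a)(\zeta) \;=\; \ln(\alpha e^{\zeta}+\beta) \;=\; (\zeta+\ln\alpha) \;+\; \ln\!\bigl(1+\tfrac{\beta}{\alpha}e^{-\zeta}\bigr),$$
which exhibits $A(a)$ as the affine translation $\zeta\mapsto\zeta+\ln\alpha$ post-composed with $\zeta\mapsto\zeta+\ln(1+\gamma e^{-\zeta})$ where $\gamma=\beta/\alpha$. Axiom $3$ places this latter function in $\mathcal{R}$, and since $\ln(1+\gamma e^{-\zeta})\sim\gamma e^{-\zeta}$ as $\zeta\to+\infty$, axiom $4$ forces it into $\mathcal{R}^{0}$. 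Hence $A\Aff\subset\Aff\circ\mathcal{R}^{0}$. Substituting back,
$$\FROM\circ\mathcal{R}\circ\TO \;\subset\; \Aff\circ\mathcal{NC}^{-1}\circ\mathcal{R}^{0}\circ\Aff\circ\mathcal{R}^{0}\circ A\mathcal{R}^{0}\circ\mathcal{R}^{0}\circ\mathcal{NC}\circ\Aff,$$
and every factor lies in $\langle\Aff,\mathcal{R}^{0},A\mathcal{R}^{0},\mathcal{NC}\rangle$ (the generated object is a group, so $\mathcal{NC}^{-1}$ is automatically included).

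\textbf{Main obstacle.} The algebra is routine; the real subtlety is asymptotic bookkeeping. Each time an affine part is stripped off, one must verify that the remainder actually lands in $\mathcal{R}^{0}$ rather than merely in $\mathcal{R}$, using the two-sided sandwich bounds in axiom $4$. In particular, identifying the axiom $3$ function as $\mathcal{R}^{0}$ (not just $\mathcal{R}$) is what makes the factorization $A\Aff\subset\Aff\circ\mathcal{R}^{0}$ tight enough to close the argument.
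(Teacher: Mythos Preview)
Your proof is correct and follows essentially the same route as the paper: unfold $\TO$ and $\FROM$, absorb the outer $\mathcal{R}^{0}$ factors into $\mathcal{R}$, reduce to controlling $A\mathcal{R}$, factor off the affine part of $\mathcal{R}$, and handle $A\Aff$ via the explicit computation $\ln(\alpha e^{\zeta}+\beta)=\zeta+\ln\alpha+\ln(1+\tfrac{\beta}{\alpha}e^{-\zeta})$ together with axiom~3. The only cosmetic difference is that the paper factors $\mathcal{R}\subset\mathcal{R}^{0}\circ\Aff$ (affine on the right) and pushes the affine through $\exp$ to obtain $\Aff\circ\exp\subset\exp\circ\mathcal{R}^{0}\circ\Aff$, whereas you factor $\mathcal{R}\subset\Aff\circ\mathcal{R}^{0}$ and compute $A\Aff$ directly; the resulting strings of generators differ only in order.
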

	
	\begin{proof}
		By definition $\FROM\circ \mathcal{R}\circ \TO$ is in:
		
		\[\Aff\circ \mathcal{NC}^{-1} \circ \mathcal{R}^{0} \circ \ln \circ \mathcal{R}^{0} \circ \mathcal{R} \circ \mathcal{R}^{0} \circ \exp \circ \mathcal{R}^{0} \circ \mathcal{NC}\circ \Aff\]
		
		which is in:
		
		\[\Aff\circ \mathcal{NC}^{-1} \circ \mathcal{R}^{0} \circ \ln \circ \mathcal{R} \circ \exp \circ \mathcal{R}^{0} \circ \mathcal{NC}\circ \Aff\]
		
		It is clear that it suffices to prove that:
		
		\[\ln \circ \mathcal{R}  \circ \exp  \subset \langle \Aff, \mathcal{R}^{0}, A\mathcal{R}^{0}, \mathcal{NC}\rangle\]
		
		Now clearly:
		
		\[\mathcal{R} \circ \exp \subset \mathcal{R}^{0} \circ \Aff \circ \exp\]
		
		Let us now take a closer look at $\Aff\circ \exp$, let $\alpha, \beta \in \mathbb{R}$ with $\alpha > 0$, then we are talking about a function of the form:
		
		\[f(\zeta) = \alpha e^{\zeta} + \beta = e^{\ln(\alpha e^{\zeta} + \beta)} = e^{\ln(\alpha) + \zeta + \ln\left(1 + \frac{\beta}{\alpha e^{\zeta}}\right)}\]
		
		And again the Taylor series of $\ln(1 + x)$ tell us that:
		
		\[\ln(\alpha) + z + \ln\left(1 + \frac{\beta}{\alpha e^{z}}\right) \in \mathcal{R}\]
		
		So:
		
		\[\Aff \circ \exp \subset \exp \circ \mathcal{R}^{0} \circ \Aff\]
		
		And thus:
		
		\[\ln \circ \mathcal{R} \circ \exp \subset \ln \circ \mathcal{R}^{0} \circ \exp \circ \mathcal{R}^{0} \circ \Aff \subset \langle A\mathcal{R}^{0}, \mathcal{R}^{0}, \Aff\rangle\]
		
		And we are done.
	\end{proof}
	
	We are now in a position to prove the Structural Theorem
	
	\begin{proof}[Proof of Structural Theorem]
		We already have that:
		
		\[G_{n}^{\log} \subset \langle A^{k}(\FROM\circ \mathcal{R}\circ \TO), \mathcal{R} \mid 0 \leq k \leq n - 1\rangle\]
		
		We will now prove by induction that:
		
		\[\langle A^{k}(\FROM\circ \mathcal{R}\circ \TO), \mathcal{R} \mid 0 \leq k \leq n - 1\rangle \subset \langle \Aff, A^{i}\mathcal{R}^{0}, A^{j}\mathcal{NC}\mid 0 \leq i \leq n, 0 \leq j \leq n - 1\rangle\]
		
		The case $n = 0$, i.e. $\mathcal{R} \subset \langle \Aff, \mathcal{R}^{0}\rangle$ is obvious. Before we do the induction step we claim that:
		
		\[A\Aff \subset \mathcal{R}\]
		
		And indeed, let $\alpha, \beta \in \mathbb{R}, \alpha > 0$, then:
		
		\[A(z \mapsto \alpha z + \beta) = z \mapsto \ln(\alpha e^{z} + \beta)\]
		
		and by assumption this is in $\mathcal{R}$.\\
		
		Assume by induction we have prove that:
		
		\[\langle A^{k}(\FROM\circ \mathcal{R}\circ \TO), \mathcal{R} \mid 0 \leq k \leq n - 2\rangle \subset \langle \Aff, A^{i}\mathcal{R}^{0}, A^{j}\mathcal{NC}\mid 0 \leq i \leq n - 1, 0 \leq j \leq n - 2\rangle\]
		
		We now want to prove that:
		
		\[\langle A^{k}(\FROM\circ \mathcal{R}\circ \TO), \mathcal{R} \mid 0 \leq k \leq n - 1\rangle \subset \langle \Aff, A^{i}\mathcal{R}^{0}, A^{j}\mathcal{NC}\mid 0 \leq i \leq n, 0 \leq j \leq n - 1\rangle\]
		
		It suffices by induction to prove that:
		
		\[A^{n - 1}(\FROM\circ \mathcal{R}\circ \TO) \subset \langle \Aff, A^{i}\mathcal{R}^{0}, A^{j}\mathcal{NC}\mid 0 \leq i \leq n, 0 \leq j \leq n - 1\rangle\]
		
		By the second Lemma we have:
		
		\[A^{n - 1}(\FROM\circ \mathcal{R}\circ \TO) \subset \langle A^{n - 1}\Aff, A^{n - 1}\mathcal{R}^{0}, A^{n}\mathcal{R}^{0}, A^{n - 1}\mathcal{NC}\rangle\]
		
		So it suffices to prove that:
		
		\[A^{n}\Aff \subset \langle \Aff, A^{i}\mathcal{R}^{0}, A^{j}\mathcal{NC}\mid 0 \leq i \leq n, 0 \leq j \leq n - 1\rangle\]
		
		Because we already know that $A \Aff \subset \mathcal{R}$, combined with the fact that $\exp \in \TO$ and $\ln \in \FROM$ we get for $n \geq 2$:
		
		\[A^{n}\Aff \subset A^{n - 2}(\FROM\circ \mathcal{R}\circ \TO)\]
		
		For $n = 1$ we simply have $\mathcal{R} \subset \Aff\circ \mathcal{R}^{0}$.\\
		
		So by induction we are done.
	\end{proof}
	\subsection{Fully stating goals}
	
	For this large inductive proof we will need to refer back to some intermediary lemma's, fixing some parameters in the lemma's. In order to do this smoothly we will add abbreviations after the relevant statements we will need to refer back to, together with the parameters in subscript. The order for the parameters is fixed, e.g. if in the original statement we put $A_{k, n, m}$ and later we write $A_{\alpha, \beta, \gamma}$ that means we refer to the statement $A_{k, n, m}$ where $k$ is replaced by $\alpha$, $n$ by $\beta$ and $m$ by $\gamma$.\\
	
	In order to deal with all the bookkeeping of \cite{ilyashenkoFiniteness}, we will first need some more definitions:
	
	\begin{Definition}
		On the germs of real analytic functions at infinity we will put the order given by saying that $f \leq g$ if for $x$ large enough:
		
		\[f(x) \leq g(x)\]
		
		Let $\alpha, \beta$ be other germs of functions at infinity, we say that $\alpha \preceq_{\Arch} \beta$ if there exist $C > 0$ such that for $x$ large enough:
		
		\[\left|\frac{\ln|\alpha(x)|}{\ln|\beta(x)|}\right| < C\]
		
		We say that $\alpha \sim_{\Arch}\beta$ if $\alpha \preceq_{\Arch}\beta$ and $\beta \preceq_{\Arch}\alpha$. We say for our germs of real analytic functions $f, g$ that $f\sim_{n} g$ if:
		
		\[e^{-\exp^{n}\circ f}\sim_{\Arch} e^{-\exp^{n}\circ g}\]
		
		If $G$ is some collection of germs of real analytic functions near infinity and $f \in G$, then we define:
		
		\[\overline{f}^{n, G} = \{g \in G \mid g \sim_{n} f\}\]
		
		We define analogously $f\preceq_{n} g$ and we define $f \prec_{n} g$ as $f \preceq_{n} g$ but not $f \sim_{n} g$.\\
		
		If it is clear from context, $n$ and/or $G$ may be dropped from notation.
	\end{Definition}
	
	\begin{Remark}
		Note that equivalently we could have defined the equivalence relation as follows: $f \sim_{0}g$ if and only if there exists some $0 < c < C < \infty$ such that for $x$ large enough:
		
		\[c < \frac{|f(x)|}{|g(x)|} < C\]
		
		And then $f\sim_{n + 1}g \Leftrightarrow e^{f} \sim_{n} e^{g}$.\\
		
		Note also that this is intimately related to the axioms by Remark \ref{RemA1Bounds}.
	\end{Remark}
	
	\begin{Example}
		We have for $c, d \in \mathbb{R}$: $x + c \sim_{0} x + d$, $x + c \sim_{1} x + d$ but unless $c = d$ we have $x + c \not\sim_{2} x + d$.
	\end{Example}
	
	Let us then write in full the Theorem we need to prove:
	
	\begin{Theorem}[Sufficiency of axioms (full)]\label{ThmSuffAxFull}
		For subsets $\mathcal{FC}^{n}_{i}$, $n \geq 0$, $i = 0, 1$, of germs of real analytic functions at infinity we define:
		
		\[\mathcal{F}^{n}_{ig} \coloneqq \mathcal{FC}_{i}^{n}\circ \exp^{n} \circ g \quad, \quad G^{-1} = \Aff \quad, \quad G^{0} = \mathcal{R}\]
		
		\[H_{0}^{n} \coloneqq \langle \id + \mathcal{F}_{0g}^{n} \mid g \in G^{n - 1}\rangle \quad, \quad H_{1}^{n} \coloneqq \langle \id + \mathcal{F}_{1g}^{n} \mid g \in G^{n - 1}\rangle\] 
		
		and we define inductively:
		
		\[J^{n - 1} \coloneqq \Ad(G^{n - 1})A^{n - 1}\mathcal{NC}\quad, \quad G^{n} \coloneqq \langle G^{n - 1}, J^{n - 1}, H^{n}_{0} \rangle\]
		
		Suppose there exists subsets $\mathcal{FC}_{i}^{n}$ satisfying the following axioms:
		
		\begin{enumerate}
			\item[$A1_{n, i}$]{For all $n$ and $i = 0, 1$, for any non-zero element $\alpha \in \mathcal{FC}^{n}_{i}$ there exist $\lambda < \mu < 0$ such that:
				
				\[e^{\lambda x} < \alpha < e^{\mu x}\]}
			\item[A2]{For all $n$:
				\begin{enumerate}
					\item[A2a]{$A^{n}\mathcal{R}^{0} \subset \id + \mathcal{F}^{n}_{0\id}$}
					\item[A2b]{$A^{n}\mathcal{NC} \subset \id + \mathcal{F}^{n}_{1\id}$}
			\end{enumerate}}
			\item[$A3_{n, i}$]{$\mathcal{FC}_{i}^{n}$ is a $\mathbb{R}$-vector space.}
			\item[A4]{For all $n \geq 0$, let $\rho$ be an element of any of the following classes of functions:
				
				\begin{enumerate}
					\item[$A4a_{n, i}$]{$\rho \in A^{-n}\overline{\id}^{n, G^{n - 1}}$.}
					\item[$A4b_{n, i}$]{$\rho \in \id + \left(\mathcal{FC}_{i}^{n}\circ A^{-n}(G^{n - 1}_{\preceq \id})\right)$ where $G^{n - 1}_{\preceq \id}$ are those elements of $G^{n - 1}$ which are $\preceq^{n} \id$.}
					\item[$A4c_{n, i}$]{If $n > 0$ or $i = 1$, $\rho \in A^{-n}J^{n - 1}$.}
				\end{enumerate}
				
				Then:
				
				\[\mathcal{FC}^{n}_{i}\circ \rho \subset \mathcal{FC}^{n}_{i}\]}
			\item[A5]{Let $k, n \geq 0, k < n$, let $i, j = 0, 1$. Let $\phi, \psi$ be part of any of the following classes of functions:
				
				\begin{enumerate}
					\item[$A5a_{k, n, i, j}$]{$\phi \in \mathcal{FC}_{i}^{k}$, $\psi \in \exp^{k}G^{n - 1}\ln^{n}$.}
					\item[$A5b_{n, i}$]{If $i = j$, $\phi \in \mathcal{FC}_{i}^{n}$, $\psi \in A^{-n}(G_{\prec\id}^{n - 1})$, where $G_{\prec\id}^{n - 1}$ is the part of $G^{n - 1}$ which is $\prec^{n} \id$ (i.e. $\preceq^{n} \id$ but not $\sim_{n} \id$).}
					\item[$A5c_{k, n, i}$]{$\phi = \exp$, $\psi \in \exp^{k}G^{n - 1}\ln^{n}$.}
				\end{enumerate}
				
				Then:
				
				\[\phi \circ (\psi + \mathcal{FC}_{j}^{n}) - \phi \circ \psi \subset \mathcal{FC}_{j}^{n}\]}
			\item[$A6_{n}$]{For all $n$ and $i = 0, 1$, $(\id  + \mathcal{FC}_{i}^{n})$ is closed under compositional inverses.}
			\item[$A7_{k, n, i}$]{For all $n \geq$, $0 \leq k \leq n$ and $i = 0, 1$, let $\cdot$ stand for pointwise multiplication, we have:
				
				\[e^{\mathcal{FC}^{n}_{i}} = 1 + \mathcal{FC}^{n}_{i}\]
				
				\[\left(\exp^{k}\circ G^{n - 1} \circ \ln^{n}\right)\cdot\mathcal{FC}^{n}_{i} = \mathcal{FC}^{n}_{i}\]}
			\item[$A8_{n}$]{We have $\mathcal{FC}_{0}^{n} \subset \mathcal{FC}_{1}^{n}$}
		\end{enumerate}
		
		Then all $(\mathcal{R}, \mathcal{NC})$-admissible polycycles have a (one-sided) neighbourhood without limit cycles.
	\end{Theorem}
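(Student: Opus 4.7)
The plan is to deduce from the Structural Theorem and axioms $A1$--$A8$ an \emph{Additive Decomposition Theorem} for the group $G^{n}$, and then apply a classical monotonicity argument. By the Structural Theorem, the logarithmic chart of any depth-$n$ balanced admissible monodromy map lies in $\langle \Aff, A^{i}\mathcal{R}^{0}, A^{j}\mathcal{NC}\mid 0\leq i\leq n,\ 0\leq j\leq n-1\rangle$. Axiom $A2$ places $A^{i}\mathcal{R}^{0}$ inside $\id+\mathcal{F}^{i}_{0\id}\subset H^{i}_{0}\subset G^{i}$, and $A^{j}\mathcal{NC}$ inside $\id+\mathcal{F}^{j}_{1\id}\subset J^{j}\subset G^{j+1}$ (taking the trivial conjugate in the definition of $J^{j}$). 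Hence every $\Delta^{\log}$ lies in $G^{n}$, and the task reduces to proving that every $f\in G^{n}\setminus\{\id\}$ admits a representation
\[f-\id = \alpha + \tilde\alpha,\]
where $\alpha\in\mathcal{F}^{m}_{ig}$ for some $m\leq n$, $i\in\{0,1\}$ and $g\in G^{m-1}$, and $\tilde\alpha$ is strictly dominated by $\alpha$ at $+\infty$ in the $\sim_{m}$-sense.

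I would prove this Additive Decomposition by induction on $n$. The base case $n=0$ follows from point $(4)$ of admissibility for $\mathcal{R}$, which writes every non-affine $r\in\mathcal{R}$ as $a+\beta$ with $a\in\Aff$ and $e^{-\lambda x}<|\beta|<e^{-\mu x}$. The inductive step verifies stability under the generators of $G^{n}=\langle G^{n-1}, J^{n-1}, H^{n}_{0}\rangle$. For composition of $f=\id+\alpha$ with $h=\id+\beta$, the expansion $f\circ h=\id+\beta+\alpha\circ h$ is processed by $A5$, which absorbs the difference $\alpha\circ h-\alpha$ back into the coset of $\alpha$ up to strictly smaller terms; axiom $A4$ likewise controls $\alpha\circ\rho$ for the three relevant classes of $\rho$; axiom $A3$ lets two terms at the same level combine within a single $\mathcal{FC}^{n}_{i}$; axiom $A6$ yields closure under inversion; axiom $A7$ carries $\mathcal{FC}$-classes through the multiplicative factors $\bigl((g^{-1})'\circ g\bigr)$ arising from conjugation; and $A8$ merges $\mathcal{FC}^{n}_{0}$-contributions into $\mathcal{FC}^{n}_{1}$-contributions whenever both types must be added. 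The definition of $J^{n-1}$ as $\Ad(G^{n-1})A^{n-1}\mathcal{NC}$ is exactly the situation $A4c$ is tailored for.

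The main obstacle is establishing a well-defined \emph{leading} term in a concrete sum $\sum_{s}\alpha_{s}$ of cosets $\mathcal{F}^{m_{s}}_{i_{s}g_{s}}$: the cosets have to be totally comparable at $+\infty$, and a unique one must strictly dominate. By Remark \ref{RemA1Bounds}, two level-$m$ cosets coincide asymptotically precisely when their $G^{m-1}$-bases are $\sim_{m}$-equivalent; otherwise the double-sided bounds of $A1$ force $\exp^{m}\circ g$ and $\exp^{m}\circ h$ onto incomparable orders of magnitude. The same bounds separate cosets at different depths $m<m'$: the upper bound $|\gamma|<e^{\mu x}$ on a generator of $\mathcal{FC}^{m'}_{i'}$, once amplified by $m'$ nested exponentials, still sits strictly below any non-trivial level-$m$ element. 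Threading this comparison consistently through every application of $A4$--$A7$ along an arbitrary word in the generators, while keeping track of the $G^{n-1}$-base $g$ at each step, is the combinatorial heart of the argument.

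Once the Additive Decomposition is in hand, the conclusion is immediate. If $\Delta^{\log}-\id\not\equiv 0$, its leading term $\alpha\in\mathcal{FC}^{m}_{i}\circ\exp^{m}\circ g$ satisfies $e^{\lambda x}<|\alpha|<e^{\mu x}$ by $A1$, so $\alpha$ has constant sign for all sufficiently large $x$ and strictly dominates $\tilde\alpha$. Consequently $\Delta^{\log}-\id$ has constant sign near $+\infty$, and transporting back through $\zeta=-\ln z$ shows that $\Delta-\id$ has no zeros on a one-sided punctured neighbourhood of the origin; the alternative $\Delta^{\log}\equiv\id$ is the identity case.
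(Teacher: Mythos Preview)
Your high-level architecture matches the paper's: Structural Theorem plus $A2$ gives the Inclusion Theorem $G_n^{\log}\subset G^n$; an Additive Decomposition Theorem for $G^n$ is then established by induction on $n$; and the final step is the Dulac monotonicity argument via $A1$. The divergence is in how the inductive step for ADT is executed.

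The paper does \emph{not} attempt to show directly that the class of elements with an additive decomposition is closed under composition with generators. The obstacle is exactly the one you call ``the combinatorial heart'' but do not resolve: when one composes $(\id+\alpha)\circ(\id+\beta)$ with $\alpha\in\mathcal{F}^{k}_{if}$ and $\beta\in\mathcal{F}^{m}_{jg}$, axiom $A5$ (what becomes Shift Lemma~2) only gives $\alpha\circ(\id+\beta)-\alpha\in\mathcal{F}^{m}_{jg}$ when $(k,\bar f)<(m,\bar g)$; if the factors arrive in the wrong order there is no Taylor step available. The paper therefore interposes a \emph{Multiplicative Decomposition Theorem}: one first proves a Shift Theorem $G^{n}=\Aff\circ H^{0}_{0}J^{0}H^{1}_{0}\cdots J^{n-1}H^{n}_{0}$ by a chain of Conjugation Lemmas (CL1--CL3, CLR), then reorders the factors within each $H^{k}_{i}$ by their $\bar g$-class using CL1 and an Ordering Theorem, and only then converts to additive form via SL2. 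The induction actually bites here: $CLR_{k,n}$ (conjugation by $f\in G^{k}$) needs $ADT_{k}$ to decompose $f^{-1}$, and the Ordering Theorem for $G^{n-1}$ needs $ADT_{n-1}$ to compare two elements termwise. Your sketch never produces this ordered multiplicative form, so the appeal to $A5$ is not yet justified.

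One specific correction: your description of $A7$ as handling ``the multiplicative factors $((g^{-1})'\circ g)$ arising from conjugation'' is not how the paper proceeds. Conjugation is never Taylor-expanded with derivative factors; instead $\Ad(f)(\id+\mathcal{F}^{n}_{ig})\subset\id+\mathcal{F}^{n}_{igf}$ is obtained by writing $f^{-1}=a+\sum\phi_{l}$ via $ADT_{k}$ and applying SL2 to each summand. Axiom $A7$ is used only in the proof of $SL3b$ (closure of $\id+\mathcal{F}^{n}_{ig}$ under inversion), to move $A^{-k}$ across $\id+\mathcal{FC}^{n}_{i}\exp^{n}$ via $\exp\circ(\ln+\alpha)=\id\cdot e^{\alpha}=\id\cdot(1+\mathcal{FC}^{n}_{i})$.
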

	
	\begin{Remark}
		For notational convenience, i.e. in order to not have to split up formulas into cases all the time we will also introduce the notation that $J^{-1} = \{\id\}$.
	\end{Remark}
	
	The proof of this will be a large induction proof with many intermediate results. Let us first talk about the 'major theorems' in the sense that they tell us the intention and idea of the proof.\\
	
	Note that axiom $A2$ gets no indexing because we do not need it in the induction.
	
	\subsection{The major Theorems of the proof}\label{SubsecMajorThm}
	
	We intrinsically start from the Structural Theorem:
	
	\[G_{n}^{\log} \subset \langle \Aff, A^{i}\mathcal{R}^{0}, A^{j}\mathcal{NC}\mid 0 \leq i \leq n, 0 \leq j \leq n - 1\rangle\]
	
	From here we want to be able to rearrange the occurrences of $A^{i}\mathcal{R}^{0}$ and $A^{j}\mathcal{NC}^{0}$ in a convenient way in order to get a good normal form for elements of $G_{n}^{\log}$. For this we really need to enlarge the class of functions at which we are looking, such that we can rearrange these terms. This is the entire reason to introduce the complicated class of functional cochains $\mathcal{FC}^{n}_{i}, i = 0, 1, n \in \mathbb{N}$.\\
	
	Now what we want to study is $G^{n}$, that is where 'shuffling around the compositions' works. A first obvious theorem we need to have is:
	
	\begin{Theorem}[Inclusion Theorem, $IT_{n}$]
		We have:
		
		\[\langle \Aff, A^{i}\mathcal{R}^{0}, A^{j}\mathcal{NC}\mid 0 \leq i \leq n, 0 \leq j \leq n - 1\rangle \subset G^{n}\]
	\end{Theorem}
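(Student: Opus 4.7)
The proof is a direct verification that each generator on the left-hand side lies in $G^n$. The plan is to handle the three generator families $\Aff$, $A^i \mathcal{R}^0$ for $0 \leq i \leq n$, and $A^j \mathcal{NC}$ for $0 \leq j \leq n-1$ separately, using in each case only the definitions of $G^k$, $H^k_0$, $J^k$ together with axioms $A2a$ and $A2b$. No use of $A1$ or $A3$--$A8$ is needed, since $IT_n$ is really a compatibility check between the definition of $G^n$ and the conclusion of the Structural Theorem.

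First I would observe, directly from $G^k = \langle G^{k-1}, J^{k-1}, H^k_0 \rangle$, that $G^{k-1} \subset G^k$ for every $k \geq 1$, while $\Aff = G^{-1} \subset G^0 = \mathcal{R}$ by admissibility of $\mathcal{R}$. Chaining these inclusions gives $\Aff \subset G^n$, and in particular $\id \in G^k$ for every $k \geq -1$; this is the only fact about $\id$ that will be used below.

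Next I would handle the $A^i \mathcal{R}^0$ family. The case $i = 0$ is trivial since $\mathcal{R}^0 \subset \mathcal{R} = G^0 \subset G^n$. For $1 \leq i \leq n$, axiom $A2a$ yields $A^i \mathcal{R}^0 \subset \id + \mathcal{F}^i_{0,\id}$. Because $\id \in G^{i-1}$ by the first step, $\id + \mathcal{F}^i_{0,\id}$ lies among the generating families of $H^i_0$, so $A^i \mathcal{R}^0 \subset H^i_0 \subset G^i \subset G^n$. Finally, for $A^j \mathcal{NC}$ with $0 \leq j \leq n-1$, since $\id \in G^j$ one has $A^j \mathcal{NC} = \Ad(\id)\, A^j \mathcal{NC} \subset \Ad(G^j)\, A^j \mathcal{NC} = J^j \subset G^{j+1} \subset G^n$, where the last inclusion uses $j+1 \leq n$. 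Combining the three steps, every generator of the left-hand side lies in $G^n$, and hence so does the group they generate, proving $IT_n$.

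There is essentially no analytic content here; the sole ``obstacle'' is keeping the indexing straight. The role of axiom $A2$ is precisely to embed the two non-trivial generator families $A^i \mathcal{R}^0$ and $A^j \mathcal{NC}$ into the building blocks of $G^n$ via $H^n_0$ and (indirectly) $J^{n-1}$. One should check carefully that $i \geq 1$ is needed before invoking $A2a$ (so that $H^i_0$ is defined and $\id \in G^{i-1}$ makes sense), and that the restriction $j \leq n-1$ forces $j+1 \leq n$ so that $G^{j+1} \subset G^n$. It is worth noting that axiom $A2b$ is not actually required for $IT_n$: the $A^j \mathcal{NC}$ terms enter $G^n$ through the adjoint construction $J^j$ rather than through the $H$-groups, and $A2b$ will instead play its role in the later inductive arguments that rearrange such terms modulo the functional cochains $\mathcal{FC}^n_1$.
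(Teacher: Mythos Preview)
Your proof is correct and matches the paper's approach: the paper simply says ``$IT_{n}$ clearly follows by induction from $A2a$'' without further elaboration, and your argument is precisely the unpacking of that sentence. Your observation that $A2b$ is not needed here (the $A^{j}\mathcal{NC}$ terms enter via $J^{j}$ rather than via $H^{j}_{1}$) is also consistent with the paper's one-line justification.
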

	
	As promised we need to be able to shift compositions around and this happens in the following form:
	
	\begin{Theorem}[Shift Theorem, $ST_{n}$]
		For all $n$ we have:
		
		\[J^{n - 1}G^{n - 1} \subset G^{n - 1}J^{n - 1}\]
		
		\[H^{n}_{0}J^{n - 1} \subset J^{n - 1}H^{n}_{0}\]
		
		\[H^{n}_{0}G^{n - 1} \subset G^{n - 1}H^{n}_{0}\]
		
		\[\mathcal{R}H_{0}^{0} \subset \Aff H_{0}^{0} \]
		
		In particular, remembering that $G^{0} = \mathcal{R}$:
		
		\[G^{n} = \Aff \circ H^{0}_{0}J^{0}H^{1}_{0}\cdots J^{n - 1}\circ H^{n}_{0}\]
		
		Furthermore, for all $n$ we have:
		
		\[H^{n}_{0}J^{n} \subset H^{n}_{1}\]
		
		So:
		
		\[G^{n} \subset \Aff \circ H^{0}_{1}H^{1}_{1}\cdots H^{n - 1}_{1}\circ H^{n}_{0}\]
	\end{Theorem}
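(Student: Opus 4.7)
The plan is to prove the four inclusions of $ST_n$ by a single induction on $n$, with the running hypothesis that $G^{n-1}$ is a group and that $ST_{n-1}$ is available. The backbone of every conjugation inclusion is the identity
\[
f\circ h \;=\; h\circ \Ad(h)(f),
\]
which reduces any inclusion of the form $AB\subset BA$ to showing that $\Ad(b)$ stabilizes the generating set of $A$ for $b$ a generator of $B$. The asserted factorization of $G^n$ then follows by repeatedly sliding every $H^n_0$-factor to the right and every $\Aff$-factor to the left, and using $ST_{n-1}$ to expand the residual $G^{n-1}$-factor into its own $(n-1)$-level normal form.

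For the base case $n=0$, the first two inclusions are trivial since $J^{-1}=\{\id\}$. For $H^0_0 G^{-1}\subset G^{-1}H^0_0$, a direct computation with $a(\zeta)=\alpha\zeta+\beta\in\Aff$ gives the exact identity $\Ad(a)(\id+\phi\circ g)=\id+\tfrac{1}{\alpha}(\phi\circ g\circ a)$, which stays in $H^0_0$ by vectorality (axiom $A3$) and $g\circ a\in\Aff$. For $\mathcal{R} H^0_0\subset\Aff\, H^0_0$, admissibility of $\mathcal{R}$ furnishes an affine leading part $a$ for each $r\in\mathcal{R}$, so that $a^{-1}\circ r\in\mathcal{R}^{0}\subset\id+\mathcal{F}^0_{0\id}\subset H^0_0$ by axiom $A2a$, and axiom $A7$ absorbs the preexisting $H^0_0$-factor into the product.

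The inductive step has three parts. The inclusion $J^{n-1}G^{n-1}\subset G^{n-1}J^{n-1}$ is formal: since $\Ad$ is an anti-homomorphism, $\Ad(g)J^{n-1}=\Ad(G^{n-1}g)\,A^{n-1}\mathcal{NC}=J^{n-1}$ by the group hypothesis. For $H^n_0 G^{n-1}\subset G^{n-1}H^n_0$ one Taylor-expands
\[
\Ad(g)(\id+\phi\circ\exp^{n}\circ\gamma)=\id+\tfrac{1}{g'\circ\id}\bigl(\phi\circ\exp^{n}\circ\gamma\circ g\bigr)+R,
\]
observing that $\gamma\circ g\in G^{n-1}$ places the leading term inside $\mathcal{FC}^n_0\circ\exp^{n}\circ(\gamma\circ g)$ once the Jacobian is absorbed via axiom $A7$, while the remainder $R$ is controlled by the Taylor axiom $A5$. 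The delicate inclusion is $H^n_0 J^{n-1}\subset J^{n-1}H^n_0$: the same expansion applies, but now $y\in J^{n-1}$ and $\gamma\circ y\notin G^{n-1}$ a priori. Writing $y=\Ad(p)(A^{n-1}\nu)$ and applying $ST_{n-1}$ one rearranges $\gamma\circ y=g''\circ j'$ with $g''\in G^{n-1}$, $j'\in J^{n-1}$, after which axiom $A4c$ pushes the factor $A^{-n}(j')$ into the cochain to restore the required form.

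With all three shift inclusions in hand, the factorization $G^n=\Aff\circ H^0_0 J^0 H^1_0\cdots J^{n-1}\circ H^n_0$ follows by iterated application together with the induction hypothesis applied to the residual $G^{n-1}$-factor. Finally, $H^n_0 J^n\subset H^n_1$: axiom $A2b$ puts $A^n\mathcal{NC}$ into $\id+\mathcal{F}^n_{1\id}$, and absorbing the ambient $\Ad(G^n)$ via $A4c$ and $A5b$ places each element of $J^n$ in $\id+\mathcal{F}^n_{1\tilde g}$ for some $\tilde g\in G^{n-1}$; axiom $A8$ then gives $H^n_0\subset H^n_1$, so the product sits in $H^n_1\cdot H^n_1=H^n_1$. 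The main obstacle is the third conjugation inclusion: keeping track of which axiom applies to which remainder term while re-indexing the cochain parameter $g$ through $ST_{n-1}$, and guaranteeing at each step of the Taylor expansion that every summand lands back in $\bigcup_{\tilde g\in G^{n-1}}\mathcal{FC}^n_0\circ\exp^{n}\circ\tilde g$ rather than a strictly larger class.
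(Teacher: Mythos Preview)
Your overall strategy---reducing each shift inclusion to a conjugation statement and assembling the factorization by induction---matches the paper's. The first inclusion $J^{n-1}G^{n-1}\subset G^{n-1}J^{n-1}$ and the base case are handled essentially as the paper does. However, two of your inlined arguments have genuine gaps.

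For $H^n_0 G^{n-1}\subset G^{n-1}H^n_0$ you write a first-order Taylor expansion carrying a Jacobian factor $\tfrac{1}{g'}$ and claim axiom $A7$ absorbs it. But $A7$ only permits multiplication of $\mathcal{FC}^n_i$ by elements of $\exp^k\circ G^{n-1}\circ\ln^n$; nothing in the axiom list says that $\tfrac{1}{g'}$, or any derivative of an element of $G^{n-1}$, lies in such a class. Similarly, $A5$ controls differences $\phi\circ(\psi+\alpha)-\phi\circ\psi$ for $\phi$ drawn from a fixed short list, not an arbitrary higher-order Taylor remainder of $g^{-1}$. The paper avoids derivatives entirely: it packages this inclusion as the Conjugation Lemma $CL3_n$, proved via $CLR_{n-1,n,0}$, which in turn decomposes $g^{-1}$ \emph{additively} through $ADT_{n-1}$ into an affine part plus finitely many cochains, after which $SL2$ applies term by term. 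Thus $ST_n$ genuinely sits inside the big induction loop and depends on $ADT_{n-1}$; the heuristic Taylor formula cannot replace this. The same objection applies to your treatment of $H^n_0 J^{n-1}\subset J^{n-1}H^n_0$, which the paper handles via $CL2_n$ along the same lines.

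For $J^n\subset H^n_1$ you invoke $A4c$ and $A5b$ to ``absorb the ambient $\Ad(G^n)$''. But $A4c$ only handles precomposition by $A^{-n}J^{n-1}$, and $A5b$ only by $A^{-n}(G^{n-1}_{\prec\id})$; neither covers conjugation by a general element of $G^n$, which contains factors from $H^n_0$ itself. In the paper this is the separate lemma $SL4a_n$: one first uses the already-established part of $ST_n$ to factor $G^n=G^{n-1}J^{n-1}H^n_0$, then treats each layer of the conjugation via $CLR_{n-1,n,1}$, $SL4b_{n,1}$, $CL1_{n-1,1}$, and $A8$. Note that $SL4a_n$ is proved \emph{after} $ST_n$ in the induction, so the final displayed inclusion $G^n\subset\Aff\circ H^0_1\cdots H^{n-1}_1\circ H^n_0$ only consumes $SL4a_h$ for $h<n$.
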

	
	\begin{Remark}
		Note that $H^{n}_{1}$ is not defined to be in $G^{n}$, nonetheless we make the claim $ H^{n}_{0}J^{n} \subset H^{n}_{1}$.\\
		
		The idea is that $J^{n - 1}$ contains $A^{n - 1}\mathcal{NC} \subset H^{n - 1}_{1}$ by axiom A2b, then the formula follows from A8.\\
		
		In essence you would like to define $G^{n} = \langle H^{0}_{1}, ..., H^{n - 1}_{1}, H^{n}_{0}\rangle$, however then it becomes impossible for the domains to remain large enough for Phraghm\`en-Lindel\"of in the more general cases. The crucial difference lies in Proposition 3.3.12 point 2, this works for sectors (or exponentials of strips) but not for general domains of almost degree 1. The reason you get actual sectors (as exponentials of strips) is because you know that the domain of $\mathcal{NC}$ is a genuine strip but for elements of $FC^{n}_{1}$ you only get a domain of almost degree 0.\\
		
		In a sense this is making the correct arrangements for someone wishing to tackle a larger class of $(\mathcal{R}, \mathcal{NC})$ using these axiomatics.\\
	\end{Remark}
	
	Now before we do the final shifts we need a result on ordering
	
	\begin{Theorem}[Ordering Theorem, $OT_{n}$]
		The following hold:
		
		\begin{enumerate}
			\item{$G^{n - 1}$ is totally ordered by $\leq$, so in particular $G^{n - 1}/\sim_{n}$ is totally ordered by $\preceq$.}
			\item{If $f \sim_{n} g$, we have for $i = 0, 1: \mathcal{F}^{n}_{if} = \mathcal{F}^{n}_{ig}$, so sometimes we write $\mathcal{F}^{n}_{i\bar{f}}$.}
			\item{$\left(\id + \mathcal{F}^{n}_{if}\right)$ forms a group under composition and if $f \leq g$, then:
				
				\[\left(\id + \mathcal{F}^{n}_{ig}\right)\left(\id + \mathcal{F}^{n}_{if}\right) \subset \left(\id + \mathcal{F}^{n}_{if}\right)\left(\id + \mathcal{F}^{n}_{ig}\right)\]}
		\end{enumerate}
	\end{Theorem}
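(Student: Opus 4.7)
The plan is to prove $OT_n$ by induction on $n$, invoking at stage $n$ the Shift Theorem $ST_{n-1}$ and the earlier instances $OT_{k}$ for $k<n$. The base case $n=0$ reduces to $G^{-1}=\Aff$: two affine maps are compared first by slope and then by intercept, the relation $\sim_0$ is trivial on $\Aff$, and the group/conjugation statements for $\id+\mathcal{F}^0_{if}$ follow from $A3$, the relevant instance of $A5$, and $A6$ applied directly without any change of coordinate.

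Part (1). I would use the factorization $G^{n-1}=\Aff\circ H^0_0J^0H^1_0\cdots J^{n-2}H^{n-1}_0$ from $ST_{n-1}$. Axiom $A1_{k,i}$ forces every factor in $H^k_0$ or $J^k$ to have the form $\id+\epsilon$ with $|\epsilon(x)|\leq e^{-\mu\exp^k(g(x))}$ for some $\mu>0$ and some $g\in G^{k-1}$, so no such factor can flip an ordering already dictated by the leading $\Aff$-piece. Comparing two factorizations of $f,\tilde f\in G^{n-1}$ therefore reduces either at once to the strict comparison of their $\Aff$-pieces or, in case of equality, to the sub-factorizations one level deeper, at which point the inductive $OT_{n-1}$ produces the total order.

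Part (2). If $f\sim_n g$ with $f,g\in G^{n-1}$, the element $h\coloneqq f\circ g^{-1}$ lies in the group $G^{n-1}$, and the substitution $x=g(y)$ applied to the defining inequality of $\sim_n$ shows $h\sim_n\id$, i.e.\ $h\in\overline{\id}^{n,G^{n-1}}$. Axiom $A4a_{n,i}$ applied to $A^{-n}(h)=\exp^n\circ h\circ\ln^n$ then gives $\mathcal{FC}^n_i\circ A^{-n}(h)\subset\mathcal{FC}^n_i$; postcomposing with $\exp^n\circ g$ and using $\ln^n\circ\exp^n=\id$ together with $h\circ g=f$ collapses this to $\mathcal{F}^n_{if}\subset\mathcal{F}^n_{ig}$, and symmetry yields equality.

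Part (3) is the main obstacle. Group closure of $\id+\mathcal{F}^n_{if}$ under composition expands as
\[
(\id+\alpha_1)\circ(\id+\alpha_2)=\id+\alpha_2+\alpha_1\circ(\id+\alpha_2),
\]
so by $A3_{n,i}$ it reduces to $\alpha_1\circ(\id+\alpha_2)\in\mathcal{F}^n_{if}$. Writing $\alpha_1=\beta_1\circ\exp^n\circ f$, this becomes a statement of the form $\beta_1\circ(\psi+\delta)-\beta_1\circ\psi\in\mathcal{FC}^n_i$ with $\psi=\exp^n\circ f$, which is handled by the appropriate $A5$-variant together with $A7$ to convert the multiplicative correction coming from Taylor-expanding $\exp^n\circ f\circ(\id+\alpha_2)$ into something inside $\mathcal{FC}^n_i$. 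Compositional inverses descend from $A6_n$ after the same change of variable. The conjugation inclusion for $f\leq g$ is obtained by rewriting
\[
\phi\circ\psi=\psi\circ(\psi^{-1}\circ\phi\circ\psi)\quad\text{for}\quad \phi\in\id+\mathcal{F}^n_{ig},\ \psi\in\id+\mathcal{F}^n_{if},
\]
which reduces the task to $\Ad(\psi)(\id+\mathcal{F}^n_{ig})\subset\id+\mathcal{F}^n_{ig}$; here $\psi$ is close enough to $\id$ on the $g$-scale (by $A1$ and $f\leq g$) that $A^n\psi\sim_n\id$, after which $A4a_{n,i}$ and Part (2) dispose of the inner factor while $A5$ absorbs the residual Taylor cross-terms. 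The delicate part throughout is matching each step to the correct $A5_{\ast}$-variant and tracking the $\sim_n$-class of the intermediate terms; this is the bookkeeping I expect to be most tedious.
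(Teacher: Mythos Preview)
Your Part~(2) is essentially the paper's argument (packaged there as Shift Lemma~1, $SL1_n$, which is nothing but $A4a_{n,i}$). The other two parts, however, contain genuine gaps.

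\textbf{Part~(1).} The recursion ``compare the $\Aff$-pieces; if equal, go one level deeper and apply $OT_{n-1}$'' does not close. After stripping the common affine factor you are left with two elements of $H^0_0 J^0 H^1_0\cdots J^{n-2}H^{n-1}_0$, and these are \emph{not} elements of $G^{n-2}$, so $OT_{n-1}$ says nothing about them directly. Moreover the $ST$-factorization is not unique, so ``compare the next factor'' is not even well-posed. The paper instead invokes the Additive Decomposition Theorem $ADT_{n-1}$: write each element of $G^{n-1}$ as $a+\phi_1+\cdots+\phi_v$ with $a\in\Aff$ and the $\phi_\ell$ ordered by their $(m_\ell,\bar f_\ell)$-index, then show (via the $A1$ bounds and the definition of $\prec_m$) that the first nonzero term in the difference of two such expansions dominates all later ones. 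That is the missing idea --- you need an \emph{additive} normal form, not the multiplicative one, to compare.

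\textbf{Part~(3).} Your plan for the conjugation inclusion invokes $A4a_{n,i}$ with $\rho=A^{-n}(\psi)$ for $\psi\in\id+\mathcal{F}^n_{if}$. But $A4a$ requires $\rho\in A^{-n}(\overline{\id}^{\,n,G^{n-1}})$, i.e.\ $\rho=A^{-n}(h)$ for some $h\in G^{n-1}$ with $h\sim_n\id$; your $\psi$ is not in $G^{n-1}$ at all (it lives at level $n$), so the axiom does not apply. The paper handles this via Conjugation Lemma~1 ($CL1_n$), whose proof combines $SL2_{n,n,i,i}$ (an $A5$-consequence: $\tilde\phi\circ(\id+\psi)-\tilde\phi\in\mathcal{F}^n_{ig}$) with $SL3a_n$ (an $A4b$-consequence: $\mathcal{F}^n_{if}\circ(\id+\mathcal{F}^n_{ig})\subset\mathcal{F}^n_{if}$ when $\bar g\leq\bar f$) and $SL3b_n$ (closure under inverses, via $A6,A7$). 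Your sketch of the group closure is closer to correct --- it is essentially $SL3a_n+SL3b_n+A3$ --- but the axiom you need there is $A4b$, not $A4a$, and the ``Taylor-expanding $\exp^n\circ f\circ(\id+\alpha_2)$'' step you allude to is precisely the inductive $A5c$-chain carried out in the paper's proof of $SL3a$.
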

	
	From this we get:
	
	\begin{Theorem}[Multiplicative Decomposition Theorem, $MDT_{n}$]
		For each $n$ we can put a total order on couples $(m, \bar{g})$ where $0 \leq m \leq n$ and $\bar{g} \in G^{m}/\sim_{m}$ by saying that:
		
		\[(k, \bar{f}) < (m, \bar{g})\]
		
		if $k < m$ or $k = m$ and $\bar{f} < \bar{g}$.\\
		
		Let $g \in G^{n}$, then $g$ can be written as:
		
		\[g = a \circ (\id + \phi_{1}) \circ \cdots \circ (\id + \phi_{v}) \circ (\id + \psi_{1}) \circ \cdots \circ (\id + \psi_{w})\]
		
		where $a \in \Aff$, $\phi_{i} \in \mathcal{F}^{m_{i}}_{1\bar{f}_{i}}$, $\psi_{j} \in \mathcal{F}^{n}_{0\bar{g}_{j}}$ in such a way that:
		
		\[(m_{i}, \bar{f}_{i}) < (m_{i + 1}, \bar{f}_{i + 1})\]
		
		\[\bar{g}_{j} < \bar{g}_{j + 1}\]
	\end{Theorem}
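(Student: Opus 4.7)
The plan is to combine the Shift Theorem with the commutation property from the Ordering Theorem to perform a two-stage sorting: first by level (already given by ST), and then by $\sim_k$-class within each homogeneous block. By the Shift Theorem we have
\[G^{n} \subset \Aff \circ H^{0}_{1} \circ H^{1}_{1} \circ \cdots \circ H^{n - 1}_{1} \circ H^{n}_{0},\]
so any $g \in G^n$ factors as $g = a \circ h_{0} \circ h_{1} \circ \cdots \circ h_{n-1} \circ h_{n}^{0}$ with $a \in \Aff$, $h_k \in H^k_1$ for $0 \leq k \leq n-1$, and $h_n^0 \in H^n_0$. The levels are thus already in the target order, and it remains only to rearrange the generators inside each block $h_k$ (and $h_n^0$).

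By the definition of $H^k_1$ (resp. $H^n_0$), each $h_k$ (resp. $h_n^0$) is a finite composition of generators of the form $\id + \phi$ with $\phi \in \mathcal{F}^{k}_{1g}$ (resp. $\phi \in \mathcal{F}^{n}_{0g}$) for various $g \in G^{k-1}$ (resp. $g \in G^{n-1}$), together with their compositional inverses. By $OT_k$ point 3, each $\id + \mathcal{F}^{k}_{if}$ forms a group, so every such inverse again lies in the same class and can be written as $\id + \phi$ with $\phi \in \mathcal{F}^{k}_{i\bar{g}}$ for a well-defined $\sim_k$-class $\bar{g}$ (by point 2). Since $G^{k-1}/\sim_k$ is totally ordered (point 1), and since point 3 also gives the swap rule
\[\bigl(\id + \mathcal{F}^{k}_{i\bar{g}}\bigr) \circ \bigl(\id + \mathcal{F}^{k}_{i\bar{f}}\bigr) \subset \bigl(\id + \mathcal{F}^{k}_{i\bar{f}}\bigr) \circ \bigl(\id + \mathcal{F}^{k}_{i\bar{g}}\bigr) \quad \text{whenever } \bar{f} \leq \bar{g},\]
a standard bubble-sort on the finitely many generators permutes them so that the classes $\bar{g}$ are strictly increasing from left to right, collapsing consecutive same-class factors into a single one by the group property.

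Concatenating the sorted blocks and relabeling yields
\[g = a \circ (\id + \phi_1) \circ \cdots \circ (\id + \phi_v) \circ (\id + \psi_1) \circ \cdots \circ (\id + \psi_w)\]
with the $\phi$-factors satisfying $m_i \in \{0, \ldots, n-1\}$ and lex-strictly increasing $(m_i, \bar{f}_i)$ (levels from ST, classes within a level from the bubble sort), and the $\psi$-factors at level $n$ with $\bar{g}_j$ strictly increasing, as required. The only nontrivial point in the argument is that we never need to commute generators across distinct levels $k \ne k'$, which would demand commutation rules not established here; fortunately the Shift Theorem already presents the levels in the correct order, so the whole rearrangement is intrablock and is handled uniformly by the Ordering Theorem for both the $i=1$ and $i=0$ cases.
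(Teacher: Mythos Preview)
Your proposal is correct and follows exactly the approach the paper intends: in the big induction the paper records only ``$ST_{n} + OT_{n} \Rightarrow MDT_{n}$: Trivial,'' and what you have written is precisely the intended unpacking---use $ST_{n}$ to stratify by level and then bubble-sort within each $H^{k}$-block using the total order, the swap rule, and the group property from $OT_{k}$ (the lower $OT_{k}$ being available from earlier induction steps).
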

	
	This then culminates in:
	
	\begin{Theorem}[Additive Decomposition Theorem, $ADT_{n}$]\label{ThmADT}
		Let $(k, \bar{f}) < (m, \bar{g})$, then for $i, j = 0, 1$:
		
		\[\mathcal{F}^{k}_{i\bar{f}}\circ (\id + \mathcal{F}^{m}_{j\bar{g}}) \subset \mathcal{F}^{k}_{i\bar{f}} + \mathcal{F}^{m}_{j\bar{g}}\]
		
		So for any $n$ any $g \in G^{n}$ can be written:
		
		\[g = a + \phi_{1} + \cdots + \phi_{v} + \psi_{1} + \cdots + \psi_{w}\]
		
		where $a \in \Aff$, $\phi_{i} \in \mathcal{F}^{m_{i}}_{1\bar{f}_{i}}$, $\psi_{j} \in \mathcal{F}^{n}_{0\bar{g}_{j}}$ in such a way that each $m_{i} < n$ and:
		
		\[(m_{i}, \bar{f}_{i}) < (m_{i + 1}, \bar{f}_{i + 1})\]
		
		\[\bar{g}_{j} < \bar{g}_{j + 1}\]
	\end{Theorem}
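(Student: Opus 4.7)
The theorem has two halves: the inclusion
\[\mathcal{F}^{k}_{i\bar{f}}\circ (\id + \mathcal{F}^{m}_{j\bar{g}}) \subset \mathcal{F}^{k}_{i\bar{f}} + \mathcal{F}^{m}_{j\bar{g}}\]
and the resulting additive decomposition. My plan is to establish the inclusion first, since the decomposition will follow from it combined with MDT$_n$.

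For the inclusion, I would take $\alpha \in \mathcal{F}^{k}_{i\bar{f}}$ and $\beta \in \mathcal{F}^{m}_{j\bar{g}}$, write $\alpha\circ(\id+\beta) = \alpha + [\alpha\circ(\id+\beta) - \alpha]$, and aim to place the Taylor remainder in $\mathcal{F}^{m}_{j\bar{g}}$. Decomposing $\alpha = A\circ \exp^{k}\circ f$ with $A\in\mathcal{FC}^{k}_{i}$ and $f\in G^{k-1}$, the strategy is to propagate the $\beta$-perturbation outwards through the composition one layer at a time, invoking axiom A5 at each layer. First I would control $f\circ(\id+\beta) - f$; then push the resulting perturbation through each of the $k$ copies of $\exp$ using A5c together with the multiplicative closure in A7; and finally absorb the outermost perturbation into $A$ via A5a. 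The hypothesis $(k,\bar f) < (m,\bar g)$, through Remark \ref{RemA1Bounds}, guarantees that $\exp^{m}\circ g$ dominates $\exp^{k}\circ f$ strongly enough that the intermediate perturbations are indeed of the form demanded by A5, and that A4 lets us change base from $f$ to $g$ inside the functional-cochain classes.

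I expect the main obstacle to be the opening step: for arbitrary $f\in G^{k-1}$ one already needs $f\circ(\id+\beta) - f$ to be of the correct size class. This forces an inner inductive argument on $k$: decompose $f$ via MDT at depth $k-1$ into a product of small pieces $(\id+\rho)$ and apply the inclusion recursively at strictly smaller total depth, using A3 to collect the resulting summands. The base case $k=0$ gives $f\in G^{-1}=\Aff$, where $f\circ(\id+\beta)-f$ is literally a positive scalar multiple of $\beta$, hence in $\mathcal{F}^{m}_{j\bar g}$ by A3.

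Given the inclusion, the decomposition is mechanical. Starting from the product $g = a \circ (\id+\phi_1)\circ\cdots\circ(\id+\psi_w)$ furnished by MDT$_n$, I would process factors from right to left. At each merge of adjacent $(\id+\chi)\circ(\id+\chi')$ with $(m_\chi,\bar f_\chi)<(m_{\chi'},\bar f_{\chi'})$, the expansion
\[(\id+\chi)\circ(\id+\chi') = \id + \chi + \chi' + \bigl[\chi\circ(\id+\chi')-\chi\bigr]\]
combined with the key inclusion places the bracket in the same class as $\chi'$, so it can be absorbed into $\chi'$ via the vector-space structure A3. Iterating this (and treating compositions with already-merged sums by expanding summand-by-summand, each expansion producing a remainder in the class of its perturbation) flattens the product into the desired sum. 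The affine head $a$ passes through unchanged, and the strict ordering on the pairs $(m_i,\bar f_i)$ and on the $\bar g_j$ is preserved because every new term produced by a merge sits in an already-present class.
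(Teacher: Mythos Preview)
Your approach is essentially the paper's. The inclusion you set out to prove is precisely what the paper isolates as Shift Lemma~2 ($SL2$), and your layer-by-layer propagation---through the outer cochain via $A5a/A5b$, through the $\exp^{k}$ via $A5c$, and through $f$ by an inner induction---matches the paper's proof of $SL2$; the only packaging difference is that the paper handles the $f$-layer by conjugation (the lemma $CLR_{k-1}$), decomposing $f^{-1}$ \emph{additively} via $ADT_{k-1}$ rather than multiplicatively via $MDT_{k-1}$ as you suggest, but the inductive content is the same. Your derivation of the decomposition from $MDT_{n}$ by peeling off factors and applying the inclusion summand-by-summand is exactly the paper's induction on the number of $MDT$ factors.
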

	
	From here the finiteness proof proceeds roughly by the method of Dulac, using the lower bound of axiom $A1$. I.e. a transfer map either has a non-trivial representation using ADT, in which case the lower bound from $A1$ gives a zero free region, or it has a trivial representation, in which case there are no isolated zeroes.\\
	
	\begin{Remark}
		Note that the structure of the ADT holds in particular formally. In particular it restricts the set of transseries which could be associated to a polycycle, for example
		
		\[\zeta + \sum_{k \geq 0} e^{-e^{k\zeta}}\]
		
		could never be associated to a polycycle because this would violate the finite sum of ADT at depth 1.
	\end{Remark}
	
	\subsection{Intermediate Lemma's}
	
	This subsection is a list of the technical lemma's used in proving sufficiency of the axioms. First come the Shift Lemma's
	
	\begin{Theorem}[Shift Lemma 1, $SL1_{n, i}$]
		For $i = 0, 1$, all $n$ and all $g \in \bar{\id} \subset G^{n}$:
		
		\[\mathcal{F}_{ig}^{n} = \mathcal{F}_{i\id}^{n}\]
	\end{Theorem}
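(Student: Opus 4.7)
The plan is to unwind the definition of $\mathcal{F}^{n}_{ig}$ and reduce the claim to a direct application of axiom $A4a_{n,i}$. Concretely, write $A^{-n}(g) = \exp^{n}\circ g\circ \ln^{n}$, so that $\exp^{n}\circ g = A^{-n}(g)\circ \exp^{n}$. The desired equality $\mathcal{F}^{n}_{ig} = \mathcal{F}^{n}_{i\id}$ then unfolds to
\[\mathcal{FC}^{n}_{i}\circ A^{-n}(g)\circ \exp^{n} = \mathcal{FC}^{n}_{i}\circ \exp^{n},\]
and by right-cancelling $\exp^{n}$ (substituting $\zeta = \ln^{n}(\eta)$) this is in turn equivalent to the set equality $\mathcal{FC}^{n}_{i}\circ A^{-n}(g) = \mathcal{FC}^{n}_{i}$.

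For the forward inclusion, since $g\in \bar{\id}$ the element $A^{-n}(g)$ lies in $A^{-n}\overline{\id}^{n, G^{n-1}}$, and axiom $A4a_{n,i}$ gives $\mathcal{FC}^{n}_{i}\circ A^{-n}(g)\subset \mathcal{FC}^{n}_{i}$ at once. For the reverse inclusion I would apply the same axiom to $g^{-1}$: using the characterization of $\sim_{n}$ in the Remark after the Example, namely that $\exp^{n}(g(x))/\exp^{n}(x)$ stays pinched between two positive constants, the substitution $y = g(x)$ transfers this property to $g^{-1}$, so $g^{-1}\sim_{n}\id$ as well. Axiom $A4a$ then yields $\mathcal{FC}^{n}_{i}\circ A^{-n}(g^{-1})\subset \mathcal{FC}^{n}_{i}$, and post-composing with $A^{-n}(g)$ and using $A^{-n}(g^{-1})\circ A^{-n}(g) = \id$ delivers the remaining inclusion $\mathcal{FC}^{n}_{i}\subset \mathcal{FC}^{n}_{i}\circ A^{-n}(g)$.

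The main obstacle will be a mild bookkeeping issue about the class in which $\bar{\id}$ is interpreted: $A4a$ is phrased with $\overline{\id}^{n, G^{n-1}}$, whereas $SL1$ writes $\bar{\id}\subset G^{n}$. If the intended reading puts $g$ already in $G^{n-1}$, the two-line argument above is complete. If one genuinely means all $g\in G^{n}$ with $g\sim_{n}\id$, I would first decompose $g$ along the three generating families $G^{n-1}, J^{n-1}, H^{n}_{0}$ of $G^{n}$ and then apply $A4a$, $A4c$ and $A4b$ respectively to the factors, closing up by the composition-closure of the $\rho$-action on $\mathcal{FC}^{n}_{i}$ packaged into axiom $A4$. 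In either case nothing beyond $A4$ and the inductively available group structure of $G^{n-1}$ is needed, which is exactly why $SL1$ sits at the base of the induction feeding the Shift Theorem.
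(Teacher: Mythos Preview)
Your proposal is correct and follows essentially the same route as the paper: unfold $\mathcal{F}^{n}_{ig}$, precompose with $\ln^{n}$, and reduce to $\mathcal{FC}^{n}_{i}\circ A^{-n}(g)\subset \mathcal{FC}^{n}_{i}$, which is axiom $A4a_{n,i}$. The paper's own proof is in fact terser than yours: it only writes the forward inclusion and declares it to be ``exactly $A4a_{n,i}$'', leaving the reverse inclusion (via $g^{-1}\in\bar{\id}$) implicit. Your treatment of both directions is the natural completion.

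On the bookkeeping point you raise: you are right that there is a mismatch. The paper's proof invokes only $A4a$, which is stated for $\overline{\id}^{n,G^{n-1}}$, so what is actually established is the $G^{n-1}$ version; and indeed the only place $SL1$ is used downstream (part~(b) of the Ordering Theorem) applies it to $f\circ g^{-1}$ with $f,g\in G^{n-1}$. So the ``$\subset G^{n}$'' in the statement should be read as $G^{n-1}$, and your two-line argument is complete without needing the $A4b$/$A4c$ decomposition you sketch for the stronger reading.
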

	
	\begin{Theorem}[Shift Lemma 2, $SL2_{k, n, i, j}$]
		Let $(k - 1, \bar{f}) < (n - 1, \bar{g})$, then for $i, j = 0, 1$, $\Phi \in \mathcal{F}^{k}_{i\bar{f}}$ and $i = j$ if $k = n$:
		
		\[\Phi\circ (\id + \mathcal{F}^{n}_{j\bar{g}}) - \Phi \subset \mathcal{F}^{n}_{j\bar{g}}\]
	\end{Theorem}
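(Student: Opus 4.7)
The plan is to recast the claim as a direct instance of axiom A5 after a coordinate change. Write $\Phi = \phi \circ \exp^k \circ f$ with $\phi \in \mathcal{FC}^k_i$ and $f \in G^{k-1}$ a representative of $\bar f$, and write any $\chi \in \mathcal{F}^n_{j\bar g}$ as $\chi = \chi_0 \circ \exp^n \circ g$ with $\chi_0 \in \mathcal{FC}^n_j$ and $g \in G^{n-1}$ a representative of $\bar g$. I pass to the coordinate $y = \exp^n(g(x))$; by the group structure of $G^{n-1}$ one has $h := f \circ g^{-1} \in G^{n-1}$, and $\Phi \circ g^{-1} \circ \ln^n = \phi \circ \exp^k \circ h \circ \ln^n =: \phi \circ \psi$. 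In the case $k < n$ the map $\psi$ lies in $\exp^k G^{n-1} \ln^n$, which is exactly the hypothesis on $\psi$ in axiom A5a; in the case $k = n$, $i = j$, the condition $\bar f < \bar g$ combined with A4a (and Shift Lemma 1 to keep the cochain class fixed) places $\psi$ in $A^{-n}(G^{n-1}_{\prec \id})$, the hypothesis of A5b.

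The remaining task is to construct $E \in \mathcal{FC}^n_j$ such that, in the $y$-coordinate, $\Phi \circ (\id + \chi) - \Phi$ becomes $\phi \circ (\psi + E) - \phi \circ \psi$. I would obtain $E$ by a two-step Taylor argument: first expand $f(g^{-1}(\ln^n y) + \chi_0(y))$ around $g^{-1}(\ln^n y)$, then apply $\exp^k$ via the iterated identity $\exp(a+b) - \exp(a) = \exp(a)(\exp(b) - 1)$. The prefactors appearing in both expansions (derivatives of $f$ pulled back to the $y$-coordinate, and factors $\exp^\ell \circ h \circ \ln^n$ arising from differentiating $\exp^k$) are all of the form $\exp^\ell \circ G^{n-1} \circ \ln^n$, so axiom A7 absorbs them into $\chi_0$ and keeps the result inside $\mathcal{FC}^n_j$; axiom A3 then adds the finitely many leading terms and A5c handles the intermediate $\exp$-compositions. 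With $E \in \mathcal{FC}^n_j$ in hand, A5a (respectively A5b) finishes the proof.

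The main obstacle is the convergence and truncation of the Taylor expansions, since A3 provides only $\mathbb{R}$-linearity, not countable sums. The cleanest way to circumvent this is to use the integral form of the Taylor remainder and write
\[
f(g^{-1}(\ln^n y) + \chi_0(y)) - f(g^{-1}(\ln^n y)) = \chi_0(y) \cdot \Bigl( \int_0^1 f'(g^{-1}(\ln^n y) + t \chi_0(y))\, dt \Bigr),
\]
so that the bracketed factor is uniformly bounded and of the form $\exp^\ell \circ G^{n-1} \circ \ln^n$ up to a controlled perturbation, allowing a single invocation of A7. A secondary delicate point arises in the case $k = n$: one must verify that the $\sim_n$-normalization of $\bar f$ afforded by A4a really produces an element $\prec_n \id$ after the $y$-substitution; this is precisely where the ordering hypothesis $\bar f < \bar g$ enters, and it should slot naturally into the larger induction governing the proof.
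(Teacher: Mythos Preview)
Your skeleton is right --- change coordinates, peel off the $\exp^k$ using A5c, and land on A5a or A5b --- and this matches the paper's route. The gap is in how you pass from $f\bigl(g^{-1}(\ln^n y) + \chi_0(y)\bigr)$ to $f\bigl(g^{-1}(\ln^n y)\bigr) + E(y)$ with $E \in \mathcal{FC}^n_j$. Your integral-remainder argument produces
\[
E(y) \;=\; \chi_0(y)\cdot \int_0^1 f'\bigl(g^{-1}(\ln^n y) + t\chi_0(y)\bigr)\,dt,
\]
and you then claim the bracket is ``of the form $\exp^\ell \circ G^{n-1}\circ \ln^n$ up to a controlled perturbation'' so that A7 absorbs it. But the axioms say nothing about derivatives or integrals: $f'$ for $f \in G^{k-1}$ is not in any class the axioms recognise, and ``uniformly bounded'' is not an operation that A3--A8 allow. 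So there is no legal move that places $E$ in $\mathcal{FC}^n_j$.

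The paper handles exactly this step by the Conjugation Lemma Regular $CLR_{k-1,n,j}$: instead of Taylor-expanding $f$, one precomposes with $f^{-1}\circ\ln^k$ and observes that $\Ad(f^{-1})(\id + \mathcal{F}^n_{jg}) \subset \id + \mathcal{F}^n_{jg\circ f^{-1}}$. This inclusion is \emph{not} elementary; $CLR_{k-1}$ is proved (item~5 of the big induction) by invoking $ADT_{k-1}$ to write $f^{-1} = a + \sum \phi_\ell$ and then applying $SL2$ at strictly lower first index to each $\phi_\ell$. In other words, the conjugation-by-$f$ step is the place where the full strength of the induction below level $k$ is spent, and your direct Taylor attack tries to shortcut this. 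Once $CLR_{k-1}$ is in hand, the rest of your argument (A5c for $\exp^k$, then A5a/A5b for $\phi$) is exactly what the paper does.
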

	
	\begin{Theorem}[Shift Lemma 3]
		For all $n$:
		\begin{enumerate}
			\item[$SL3a_{n, i}$]{Let $f, g \in G^{n - 1}$, suppose that $\bar{g} \leq \bar{f}$. Then for $i = 0, 1$:
				
				\[\mathcal{F}_{if}^{n}\circ(\id + \mathcal{F}^{n}_{ig}) \subset \mathcal{F}_{if}^{n}\]}
			\item[$SL3b_{n, i}$]{We have for all $n$ and $i = 0, 1$:
				
				\[(\id + \mathcal{F}_{ig}^{n})^{-1} = \id + \mathcal{F}_{ig}^{n}\]}
		\end{enumerate}
	\end{Theorem}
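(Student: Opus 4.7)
The plan is to prove $SL3a$ first by a conjugation trick and axiom $A4b$, and then derive $SL3b$ by combining $SL3a$ with a Picard-type fixed-point argument and axiom $A6$.

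For $SL3a$, take any $\Phi = \phi \circ \exp^{n} \circ f \in \mathcal{F}^{n}_{if}$ with $\phi \in \mathcal{FC}^{n}_{i}$, and any perturbation $\id + \epsilon$ with $\epsilon = \psi \circ \exp^{n} \circ g \in \mathcal{F}^{n}_{ig}$. I would rewrite
$$\Phi \circ (\id + \epsilon) = \phi \circ \rho \circ \exp^{n} \circ f, \qquad \rho := (\exp^{n} \circ f) \circ (\id + \epsilon) \circ (\exp^{n} \circ f)^{-1}.$$
Since $\mathcal{F}^{n}_{if} = \mathcal{FC}^{n}_{i} \circ \exp^{n} \circ f$ by definition, it suffices to show $\phi \circ \rho \in \mathcal{FC}^{n}_{i}$, and this will follow from $A4b_{n, i}$ once I verify that $\rho \in \id + \mathcal{FC}^{n}_{i} \circ A^{-n}(G^{n-1}_{\preceq \id})$. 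Setting $z := f^{-1}(\ln^{n} y)$, a mean value expansion yields $\rho(y) - y = (\exp^{n} \circ f)'(z + \theta\epsilon(z)) \cdot \epsilon(z)$ for some $\theta \in [0, 1]$; the derivative expands as $f'(z)$ multiplied by the tower of factors $\exp^{k} \circ f$ for $k = 1, \ldots, n$, each of which becomes an element of $\exp^{k} \circ G^{n-1} \circ \ln^{n}$ after back-substitution. The archimedean dominance $\bar g \leq \bar f$ allows the $z$-dependence of $\epsilon$ to be rewritten as $\psi$ composed with $\exp^{n} \circ h \circ \ln^{n}$ for a suitable $h \in G^{n-1}_{\preceq \id}$. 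Axiom $A7_{k, n, i}$ then absorbs the multiplicative derivative tower into $\mathcal{FC}^{n}_{i}$, while $A3_{n, i}$ packages the Taylor remainder into the same space.

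For $SL3b$, the $\supset$ inclusion is automatic, so only $(\id + \mathcal{F}^{n}_{ig})^{-1} \subset \id + \mathcal{F}^{n}_{ig}$ remains. Writing $(\id + \epsilon)^{-1} = \id + \eta$, the relation $\eta = -\epsilon \circ (\id + \eta)$ is the central identity. Applying part (a) with $f = g$ gives $\mathcal{F}^{n}_{ig} \circ (\id + \mathcal{F}^{n}_{ig}) \subset \mathcal{F}^{n}_{ig}$, so the Picard iteration $\eta_{0} := 0$, $\eta_{k+1} := -\epsilon \circ (\id + \eta_{k})$ stays in $\mathcal{F}^{n}_{ig}$ at each step. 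Existence of the inverse inside $\id + \mathcal{F}^{n}_{ig}$ is then obtained by combining this invariance with axiom $A6_{n}$ (which guarantees invertibility at the level of the coefficient class $\id + \mathcal{FC}^{n}_{i}$), while convergence of the iteration is ensured by the exponential smallness of $\epsilon$ supplied by $A1_{n, i}$.

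The main obstacle will be verifying in $SL3a$ that $\rho$ really lies in the precise class demanded by $A4b$: the derivative tower $(\exp^{n} \circ f)'$ naively threatens to push us outside of $\mathcal{FC}^{n}_{i}$, and confirming via $A7$ that the product absorbs back while arranging the ``base'' $h$ to belong to $G^{n-1}_{\preceq \id}$ using the hypothesis $\bar g \leq \bar f$ is where the delicate bookkeeping lies. Part (b) should then follow fairly mechanically from the fixed-point structure and $A6$.
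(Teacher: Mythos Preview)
Your overall shape for $SL3a$ --- conjugate by $\exp^{n}\circ f$ and reduce to $A4b$ --- matches the paper, but the step where you compute $\rho-\id$ by a mean value expansion does not work in this axiomatic setting. The classes $\mathcal{FC}^{n}_{i}$ are abstract sets closed under the operations listed in $A3$--$A8$; membership is not established by size estimates. Your formula $\rho(y)-y=(\exp^{n}\circ f)'(z+\theta\epsilon(z))\cdot\epsilon(z)$ involves a pointwise selection $\theta=\theta(z)\in[0,1]$ which need not even be continuous, so the factor $(\exp^{n}\circ f)'(z+\theta\epsilon(z))$ is not of the form $\exp^{k}\circ G^{n-1}\circ\ln^{n}$ and $A7$ cannot absorb it. The paper instead handles $\rho$ by two exact set inclusions: first $CLR_{n-1,n,i}$ turns $\Ad(f^{-1})(\id+\mathcal{F}^{n}_{ig})$ into $\id+\mathcal{F}^{n}_{i,gf^{-1}}$, and then $A5c$ (iterated as in the proof of $SL2$) pulls $\exp^{n}$ through the perturbation exactly, yielding $\rho\in\id+\mathcal{FC}^{n}_{i}\circ A^{-n}(G^{n-1}_{\preceq\id})$ ready for $A4b$.

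For $SL3b$ your Picard iteration has the same defect: each iterate $\eta_{k}$ lies in $\mathcal{F}^{n}_{ig}$, but no axiom says this class is closed under pointwise or uniform limits, so you cannot conclude $\eta\in\mathcal{F}^{n}_{ig}$ from convergence (and invoking $A1$ here is out of place --- in the induction $SL3b$ is proved from $A6$, $A7$, $CLR$ alone). You also cannot apply $A6$ directly, since $A6$ concerns $\id+\mathcal{FC}^{n}_{i}$, not $\id+\mathcal{F}^{n}_{ig}$. The paper's route is to first prove the \emph{exact} equality $\Ad(g)(\id+\mathcal{F}^{n}_{i\id})=\id+\mathcal{F}^{n}_{ig}$ via $CLR_{n-1,n,i}$, and then the exact equality $A^{-n}(\id+\mathcal{FC}^{n}_{i}\circ\exp^{n})=\id+\mathcal{FC}^{n}_{i}$ via $A7$; since inversion commutes with conjugation, closure of $\id+\mathcal{F}^{n}_{ig}$ under inverses then reduces literally to $A6$.
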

	
	\begin{Theorem}[Shift Lemma 4]
		For all $n$, $i = 0, 1$:
		
		\begin{enumerate}
			\item[$SL4a_{n}$]{\[J^{n} \subset H_{1}^{n}\]}
			\item[$SL4b_{n, i}$]{\[\mathcal{F}_{ig}^{n}\circ J^{n - 1} \subset \mathcal{F}_{ig}^{n}\]}
		\end{enumerate}
	\end{Theorem}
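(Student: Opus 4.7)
The plan is to prove $SL4b_{n,i}$ first, since it reduces almost immediately to axiom $A4c$, and then to bootstrap to $SL4a_n$ via the Shift Theorem decomposition of $G^n$.

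For $SL4b_{n,i}$ the case $n=0$ is vacuous because $J^{-1}=\{\id\}$ by convention. For $n\geq 1$ the key observation is that $J^{n-1}$ is normal under $\Ad(G^{n-1})$: for any $f\in G^{n-1}$ and any $\Ad(g)(h)\in J^{n-1}$ one has $\Ad(f)(\Ad(g)(h))=\Ad(gf)(h)\in J^{n-1}$. Hence $g\circ J^{n-1}\subset J^{n-1}\circ g$ for every $g\in G^{n-1}$, and combining this with the general identity $\exp^{n}\circ j=A^{-n}(j)\circ\exp^{n}$ gives
\[\mathcal{F}^{n}_{ig}\circ J^{n-1}=\mathcal{FC}^{n}_{i}\circ\exp^{n}\circ g\circ J^{n-1}\subset \mathcal{FC}^{n}_{i}\circ A^{-n}(J^{n-1})\circ\exp^{n}\circ g.\]
Axiom $A4c_{n,i}$ then absorbs $A^{-n}(J^{n-1})$ into $\mathcal{FC}^{n}_{i}$, yielding $SL4b_{n,i}$.

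For $SL4a_n$, axiom $A2b$ already places $A^{n}\mathcal{NC}$ inside $\id+\mathcal{F}^{n}_{1\id}\subset H^{n}_{1}$, so it suffices to show that $H^{n}_{1}$ is stable under $\Ad(G^{n})$. I would use the Shift Theorem decomposition $G^{n}=\Aff\circ H^{0}_{0}\circ J^{0}\circ H^{1}_{0}\circ\cdots\circ J^{n-1}\circ H^{n}_{0}$ to reduce to verifying stability under each generating factor. For $a\in\Aff$ a direct calculation yields $\Ad(a)(\id+\phi)=\id+\alpha^{-1}\phi\circ a$, which by $A3$ and the inclusion $\Aff\subset G^{n-1}$ lies in $\id+\mathcal{F}^{n}_{1,\,a\circ g}\subset H^{n}_{1}$. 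For a factor $\id+\eta\in H^{k}_{0}$ one expands $\Ad(\id+\eta)(\id+\phi)$ using $SL3b$ to invert $\id+\eta$ and applies $SL2$ (when $k<n$) or $SL3a$ (when $k=n$) to absorb the Taylor remainders into an appropriate $\mathcal{F}^{n}_{1h}$. For a factor $j\in J^{k}$ with $k\leq n-1$ one invokes $SL4b$, just proved, to commute $j$ past the inner $\exp^{n}\circ g$, then reduces again to the additive case.

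The main obstacle will be the conjugation by a $J^{k}$-factor at intermediate level $k<n$: unlike the $H^{k}_{0}$-case, elements of $J^{k}$ are not given in any additive form, so the Taylor-expansion arguments used for $H^{k}_{0}$-factors do not apply directly. One must combine $SL4b$ with the closure axioms $A4$ and $A7$ to control the interaction of $J^{k}$ with the inner argument $\exp^{n}\circ g$ and check that the resulting inner function remains inside $G^{n-1}$. This bookkeeping is delicate, but is precisely what the axiom system $A3$--$A7$ was tailored to accommodate.
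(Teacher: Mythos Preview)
Your argument for $SL4b_{n,i}$ is essentially the paper's: both reduce to $\mathcal{FC}^{n}_{i}\circ A^{-n}(J^{n-1})\subset\mathcal{FC}^{n}_{i}$ via the normality of $J^{n-1}$ under $\Ad(G^{n-1})$ and then invoke $A4c_{n,i}$.

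For $SL4a_{n}$ your overall plan is right but two points diverge from the paper, and one of them is a genuine gap.

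First, the paper does \emph{not} use the full decomposition $G^{n}=\Aff\circ H^{0}_{0}\circ J^{0}\circ\cdots\circ J^{n-1}\circ H^{n}_{0}$; it uses the much coarser first form $G^{n}=G^{n-1}\circ J^{n-1}\circ H^{n}_{0}$ from $ST_{n}$. This means only three conjugations are needed: $\Ad(G^{n-1})$ is handled in one stroke by $CLR_{n-1,n,1}$, and $\Ad(H^{n}_{0})$ by $A8_{n}$. Your worry about intermediate $J^{k}$ with $k<n-1$ is therefore a red herring: every such factor, and every $H^{k}_{0}$ with $k\leq n-1$, already sits inside $G^{n-1}$, so $CLR_{n-1,n,1}$ absorbs them all at once.

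Second, and more seriously, your treatment of the genuinely hard factor $J^{n-1}$ is not what the paper does and would not work as written. Invoking $SL4b_{n,1}$ gives you control over $\phi\circ j$, but conjugation $\Ad(j)(\id+\phi)=j^{-1}\circ(\id+\phi)\circ j$ also involves $j^{-1}$ acting on the left, and $SL4b$ says nothing about that. The paper's key move is to apply the \emph{inductive hypothesis} $SL4a_{n-1}$ first: this yields $j\in H^{n-1}_{1}$, and then $CL1_{n-1,1}$ together with $SL2_{n-1,n-1,1,1}$ lets one write $j$ (and $j^{-1}$) in the additive form $\id+\sum_{\ell}\mathcal{F}^{n-1}_{1g_{\ell}}$. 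Only now does $SL2_{n-1,n,1,1}$ apply, as in the proof of $CLR$, to give $\Ad(j)(\id+\mathcal{F}^{n}_{1g})\subset\id+\mathcal{F}^{n}_{1g}\circ j$, and $SL4b_{n,1}$ finishes. So the order is: inductive $SL4a_{n-1}$ $\to$ additive decomposition of $j$ $\to$ $SL2$ $\to$ $SL4b$. Your proposal has $SL4b$ first and the additive reduction afterwards, which leaves the $j^{-1}$ on the outside unhandled.
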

	
	Next the conjugation Lemma's:
	
	\begin{Theorem}[Conjugation Lemma 1, $CL1_{n, i}$]
		Let $f, g \in G^{n - 1}$ be such that $\bar{f} < \bar{g}$, let $i = 0, 1$, then:
		
		\[\Ad\left(\id + \mathcal{F}_{if}^{n}\right)(\id + \mathcal{F}_{i\bar{g}}^{n}) \subset \id + \mathcal{F}_{ig}^{n}\]
	\end{Theorem}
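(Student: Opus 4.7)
The approach is a direct Taylor-style expansion of the conjugation, matched against the previously established shift lemmas. Write $h = \id + \alpha$ with $\alpha \in \mathcal{F}^{n}_{if}$ and let $\beta \in \mathcal{F}^{n}_{i\bar{g}}$. By $SL3b_{n,i}$ one has $h^{-1} = \id + \tilde{\alpha}$ for some $\tilde{\alpha} \in \mathcal{F}^{n}_{if}$, and expanding the identity $h^{-1}\circ h = \id$ yields the useful cancellation $\tilde{\alpha}\circ h = -\alpha$. Unwinding $h^{-1}\circ(\id+\beta)\circ h$ using this cancellation gives
\[
\Ad(h)(\id+\beta) - \id \;=\; \beta\circ h \;+\; \bigl[\tilde{\alpha}\circ(\id+\beta) - \tilde{\alpha}\bigr]\circ h,
\]
where the bracketed term arises by substituting $u = h(x)$ in $\tilde{\alpha}(h(x)+\beta(h(x))) - \tilde{\alpha}(h(x))$.

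I would then treat the two summands separately. For the first, since $\bar{f}<\bar{g}$ we have $\bar{f}\le\bar{g}$, so $SL3a_{n,i}$ applied with the roles interchanged (outer class $g$, inner class $f$) yields $\mathcal{F}^{n}_{ig}\circ(\id+\mathcal{F}^{n}_{if})\subset\mathcal{F}^{n}_{ig}$, and in particular $\beta\circ h \in \mathcal{F}^{n}_{ig}$. For the second summand, $SL2_{n,n,i,i}$ applies since $(n-1,\bar{f})<(n-1,\bar{g})$ and $\tilde{\alpha}\in\mathcal{F}^{n}_{i\bar{f}}$, producing $\tilde{\alpha}\circ(\id+\beta) - \tilde{\alpha} \in \mathcal{F}^{n}_{ig}$; a second invocation of $SL3a_{n,i}$ then allows postcomposition with $h\in\id+\mathcal{F}^{n}_{if}$ while staying in $\mathcal{F}^{n}_{ig}$. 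Closure of $\mathcal{F}^{n}_{ig}$ under addition, guaranteed by $A3_{n,i}$, gives that the total sum lies in $\mathcal{F}^{n}_{ig}$, which is the claim.

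The main conceptual point is the cancellation $\alpha + \tilde{\alpha}\circ h = 0$: without it the expansion of $\Ad(h)(\id+\beta) - \id$ would carry a bare $\alpha\in\mathcal{F}^{n}_{if}$, which lives in a strictly larger class than the target $\mathcal{F}^{n}_{ig}$ and could not be absorbed. Once this cancellation is noticed, the remaining work is mechanical application of the shift lemmas. From the standpoint of the global induction the only bookkeeping matter is that $SL2_{n,n,i,i}$, $SL3a_{n,i}$, and $SL3b_{n,i}$ should be in place before $CL1_{n,i}$ is invoked, which is straightforward to arrange given the indexing conventions.
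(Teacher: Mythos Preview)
Your proof is correct and follows essentially the same route as the paper. Both arguments rely on $SL3b_{n,i}$ to write $h^{-1}=\id+\tilde{\alpha}$, on $SL2$ (at level $k=n$, same $i$) to control $\tilde{\alpha}\circ(\id+\beta)-\tilde{\alpha}$, and on $SL3a_{n,i}$ to absorb the postcomposition with $h=\id+\alpha$; the only difference is ordering --- you expand the full conjugation and make the cancellation $\tilde{\alpha}\circ h=-\alpha$ explicit, while the paper first computes $(\id+\tilde{\phi})\circ(\id+\psi)\in(\id+\tilde{\phi})+\mathcal{F}^{n}_{ig}$ and then postcomposes with $(\id+\phi)$, using the same cancellation implicitly via $(\id+\tilde{\phi})\circ(\id+\phi)=\id$.
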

	
	\begin{Theorem}[Conjugation Lemma 2, $CL2_{n}$]
		For all $n > 0$:
		
		\[\Ad(J^{n - 1})H^{n}_{0} = H^{n}_{0}\]
	\end{Theorem}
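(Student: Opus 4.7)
The plan is to reduce the set equality $\Ad(J^{n-1})H^n_0 = H^n_0$ to its single-generator, single-conjugator content. The inclusion $H^n_0 \subset \Ad(J^{n-1})H^n_0$ is immediate: the admissibility condition $\mathcal{NC}\cap\Aff = \{\id\}$ puts $\id \in \mathcal{NC}$, hence $\id = A^{n-1}\id \in A^{n-1}\mathcal{NC} \subset J^{n-1}$, and $\Ad(\id)$ is the identity operator. For the nontrivial inclusion $\Ad(J^{n-1})H^n_0 \subset H^n_0$, since $\Ad(j)$ is a homomorphism for the composition group structure on $H^n_0$, it suffices to show that for every $j\in J^{n-1}$ and every generator $\id+\phi$ of $H^n_0$ (with $\phi = \chi\circ\exp^n\circ g$, $\chi\in\mathcal{FC}^n_0$, $g\in G^{n-1}$) one has $\Ad(j)(\id+\phi) \in \id+\mathcal{F}^n_{0g}$.

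The central observation is the stability $\Ad(G^{n-1})J^{n-1} = J^{n-1}$, which is immediate from the definition $J^{n-1} = \Ad(G^{n-1})A^{n-1}\mathcal{NC}$ and the fact that $G^{n-1}$ is a group under composition. In particular $g\circ j\circ g^{-1}\in J^{n-1}$, and I can rewrite
\[
\phi\circ j \;=\; \chi\circ\exp^n\circ g\circ j \;=\; \bigl(\chi\circ A^{-n}(gjg^{-1})\bigr)\circ\exp^n\circ g.
\]
Axiom $A4c$ (applicable because $n>0$), applied with $\rho = A^{-n}(gjg^{-1}) \in A^{-n}J^{n-1}$, gives $\chi\circ\rho\in\mathcal{FC}^n_0$, so that $\phi\circ j\in\mathcal{F}^n_{0g}$.

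It remains to compose on the left by $j^{-1}$. Since $j\in J^{n-1}\subset H^{n-1}_1$ by $SL4a$, and $SL3b$ gives $j^{-1} = \id + \tilde\xi$ with $\tilde\xi$ in the same $\mathcal{FC}^{n-1}_1$-class as $j-\id$, setting $y := j(x)$ we get
\[
j^{-1}\!\bigl(j(x)+\phi(j(x))\bigr) - x \;=\; \phi(y) + \bigl[\tilde\xi(y+\phi(y)) - \tilde\xi(y)\bigr].
\]
As a function of $x$ the first summand is $\phi\circ j\in\mathcal{F}^n_{0g}$ by the previous step. The bracketed remainder has exactly the form $\tilde\xi\circ(\psi+\delta) - \tilde\xi\circ\psi$ with $\psi = j$ and $\delta = \phi\circ j\in\mathcal{F}^n_{0g}$, and is placed in $\mathcal{F}^n_{0g}$ by axiom $A5$, together with axiom $A7$ absorbing any multiplicative derivative factor such as $1/j'$ that appears along the expansion. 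Axiom $A3$ then sums the two pieces, yielding $\Ad(j)(\id+\phi)\in\id+\mathcal{F}^n_{0g}\subset H^n_0$.

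The main obstacle is the remainder bookkeeping: every piece of $j^{-1}\circ(j+\phi\circ j) - \id$ must be kept inside $\mathcal{F}^n_{0g}$, which requires invoking $A3$, $A4c$, $A5$, $A7$ in the right order on nested compositions involving $j$ and $g$. The key conceptual point that makes the argument succeed is the identity $gjg^{-1}\in J^{n-1}$: this is exactly what licenses $A4c$ to absorb both the outer conjugating $g$'s and the inner $j$ into a single perturbation of $\chi$ inside $\mathcal{FC}^n_0$, while leaving the outer $\exp^n\circ g$ intact so that the final output is of the form $\id + \mathcal{F}^n_{0g}$ rather than landing in some $\mathcal{F}^n_{0g'}$ with $g'$ uncontrolled.
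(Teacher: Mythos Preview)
Your overall strategy matches the paper's: reduce to a single generator $\id+\phi$ with $\phi\in\mathcal{F}^n_{0g}$ and a single conjugator $j\in J^{n-1}$, use the $\Ad(G^{n-1})$-stability of $J^{n-1}$ to place $\phi\circ j$ back in $\mathcal{F}^n_{0g}$ (this is exactly $SL4b_{n,0}$, whose derivation from $A4c$ you have reproduced), and then absorb the left composition by $j^{-1}$ additively. The paper phrases the latter as $\Ad(j)(\id+\mathcal{F}^n_{0g})\subset\id+\mathcal{F}^n_{0g}\circ j$, arguing ``as in point 9'', and then closes with $SL4b_{n,0}$.

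There are, however, two genuine gaps in your justification of the remainder term. First, your direct appeal to axiom $A5$ on $\tilde\xi\circ(j+\phi\circ j)-\tilde\xi\circ j$ does not type-check: $A5$ is formulated at the level of $\mathcal{FC}^n_j$, with the inner argument $\psi$ required to lie in $\exp^kG^{n-1}\ln^n$ or $A^{-n}(G^{n-1}_{\prec\id})$, and the perturbation in $\mathcal{FC}^n_j$; neither $j\in J^{n-1}$ nor $\phi\circ j\in\mathcal{F}^n_{0g}$ fits this template. The correct tool is the Shift Lemma $SL2_{n-1,n,1,0}$, which lifts $A5$ to the $\mathcal{F}$ level: for $\Phi\in\mathcal{F}^{n-1}_{1\bar f}$ one has $\Phi\circ(\id+\mathcal{F}^n_{0\bar g})-\Phi\subset\mathcal{F}^n_{0\bar g}$. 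Rewriting your remainder as $[\tilde\xi\circ(\id+\phi)-\tilde\xi]\circ j$, one applies $SL2_{n-1,n,1,0}$ to the bracket and then $SL4b_{n,0}$ once more to absorb the trailing $\circ j$. No $1/j'$ factor ever appears, so the appeal to $A7$ is superfluous.

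Second, writing $j^{-1}=\id+\tilde\xi$ with $\tilde\xi$ ``in the same $\mathcal{FC}^{n-1}_1$-class'' and citing $SL3b$ is too quick: $SL4a_{n-1}$ only gives $j\in H^{n-1}_1$, i.e.\ a \emph{composition} of factors $\id+\mathcal{F}^{n-1}_{1g_\ell}$, and $SL3b$ inverts a single such factor. To get the additive form $j^{-1}=\id+\sum_\ell\tilde\xi_\ell$ with $\tilde\xi_\ell\in\mathcal{F}^{n-1}_{1g_\ell}$, so that $SL2_{n-1,n,1,0}$ can be applied termwise, one must first reorder via $CL1_{n-1,1}$ and then expand via $SL2_{n-1,n-1,1,1}$; this is precisely what the paper's reference to ``point 9'' encodes.
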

	
	\begin{Theorem}[Conjugation Lemma 3, $CL3_{n}$]
		For all $n$:
		
		\[\Ad(G^{n - 1})H_{0}^{n} = H^{n}_{0}\]
	\end{Theorem}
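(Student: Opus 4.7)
The plan is to prove $\Ad(G^{n-1})H_0^n\subseteq H_0^n$; the reverse inclusion then follows by applying the same statement to $\gamma^{-1}$, which also lies in $G^{n-1}$. Because $\Ad(\gamma)$ is a group automorphism and $H_0^n$ is generated by the basic perturbations $\id+\phi$ with $\phi\in\mathcal{F}_{0g}^n$ and $g\in G^{n-1}$, it suffices to verify that $\Ad(\gamma)(\id+\phi)\in H_0^n$ on such generators, for every $\gamma\in G^{n-1}$.

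The central computation will be the Taylor expansion at $\gamma(x)$,
\[
\Ad(\gamma)(\id+\phi)(x)=x+\sum_{k\ge 1}\frac{(\gamma^{-1})^{(k)}(\gamma(x))}{k!}\bigl(\phi(\gamma(x))\bigr)^{k},
\]
which converges absolutely by axiom $A1$ since $\phi\circ\gamma$ is super-exponentially small. I would aim to place the whole sum in $\mathcal{F}_{0(g\gamma)}^n$: this class is well-defined because $g\gamma\in G^{n-1}$ (as $G^{n-1}$ is a group), and $\id+\mathcal{F}_{0(g\gamma)}^n\subset H_0^n$ by definition. The leading term factors as $(1/\gamma')\cdot(\phi\circ\gamma)$; the second factor lies in $\mathcal{F}_{0(g\gamma)}^n$ directly from unpacking $\phi$, while the first, once pushed through the change of variable $y=\exp^n(g\gamma(x))$, takes the form $h\circ\ln^n(y)$ for a suitable $h\in G^{n-1}$, so axiom $A7$ absorbs it back into the class. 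Higher-order terms will be handled by the same mechanism iterated, using the $\mathbb{R}$-vector space axiom $A3$, the multiplicativity axiom $A7$, and the composition stability $A4$ for the derivative factors $(\gamma^{-1})^{(k)}\circ\gamma$.

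To carry this out uniformly in $\gamma$, I would first reduce to the case where $\gamma$ is one of the generators of $G^{n-1}$ furnished by the Shift Theorem $ST_{n-1}$, available at this point of the intertwined induction. An affine $\gamma\in\Aff$ is immediate. For $\gamma\in J^k$ with $k\le n-2$, use $SL4a$ to write $\gamma=\id+\psi$ with $\psi\in\mathcal{F}_{1f}^k$ and apply $SL2_{k,n,1,0}$ to push the derivative factors through; for $\gamma\in H_0^{k}$ with $k\le n-1$, the analogous step uses $CL1_n$ or $SL2$ depending on whether the level pair $(k-1,\bar f)$ compares below or above $(n-1,\bar g)$ in the lexicographic order.

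The hard part will be the bookkeeping around the derivative factors $(\gamma^{-1})^{(k)}\circ\gamma$: they are not obviously in any named class, and identifying them, after the change of variable to $y=\exp^n(g\gamma(x))$, as admissible multipliers in the sense of $A7$ is what forces the case split via $ST_{n-1}$. For each elementary factor of $\gamma$ the derivatives have an explicit description (constant for $\Aff$, near-identity perturbation for $J^k$ and $H_0^k$), and the axioms then do the rest; the overall structure mirrors that of $CL1$ and $CL2$.
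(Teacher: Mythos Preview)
Your reduction to generators is fine, but the execution after that point is both overly complicated and contains a genuine gap. The paper's argument is a two-line application of lemmas already available in the induction: since conjugation commutes with composition (and $SL3b_{n,0}$ handles inverses), it suffices to show $\Ad(G^{n-1})(\id+\mathcal{F}^{n}_{0g})\subset H^{n}_{0}$; this is precisely $CLR_{n-1,n,0}$, which gives $\Ad(f)(\id+\mathcal{F}^{n}_{0g})\subset\id+\mathcal{F}^{n}_{0(g\circ f)}$ for any $f\in G^{n-1}$. Note that $CLR_{n-1,n,0}$ was established earlier in the same pass of the induction (point~5), using $ADT_{n-1}$ from the \emph{previous} level to decompose $f^{-1}$ additively and then applying $SL2$ term by term.

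Your Taylor-expansion route tries to bypass $CLR$ and $ADT_{n-1}$ altogether, and this is where the gap lies. The plan hinges on placing the factors $(\gamma^{-1})^{(k)}\circ\gamma$, after the change of variable $y=\exp^{n}(g\gamma(x))$, into the multiplier class appearing in axiom $A7$, namely $\exp^{k}\circ G^{n-1}\circ\ln^{n}$. But the axiomatic framework says nothing about derivatives of elements of $G^{n-1}$; there is no reason $(\gamma^{-1})'$ or higher derivatives should land in (or be expressible via) the classes $G^{m}$, $\mathcal{FC}^{m}_{i}$, or their multiplicative combinations. Your sketch acknowledges this (``they are not obviously in any named class'') but the proposed case split via $ST_{n-1}$ does not actually resolve it: even for an elementary factor like $\gamma\in H^{k}_{0}$, the derivative $\gamma'$ is $1+\psi'$ for some $\psi\in\mathcal{F}^{k}_{0h}$, and $\psi'$ is not governed by any axiom. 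The paper's additive decomposition via $ADT_{n-1}$ is precisely what avoids ever having to differentiate: one writes $\gamma^{-1}=a+\sum\phi_{j}$ and composes each summand with $\id+\psi$ using $SL2$, never touching a derivative.

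A secondary issue: your convergence claim for the Taylor series invokes $A1$ to say $\phi\circ\gamma$ is ``super-exponentially small'', but $A1$ only gives exponential decay $e^{-\lambda x}$, and convergence also depends on the radius of analyticity of $\gamma^{-1}$, which is not controlled by the axioms.
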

	
	\begin{Theorem}[Conjugation Lemma Regular, $CLR_{k, n, i}$]
		Let $k < n$, $f \in G^{k}$, $g \in G^{n - 1}$, let $i = 0, 1$, then:
		
		\[\Ad(f)(\id + \mathcal{F}^{n}_{ig}) \subset \id + \mathcal{F}^{n}_{ig\circ f}\]
	\end{Theorem}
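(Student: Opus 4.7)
The plan is to reduce CLR to a finite list of generators of $G^{k}$ via the compositionality of $\Ad$, and then to attack each generator by a Taylor/Lagrange-inversion expansion. From $\Ad(f_{1}\circ f_{2}) = \Ad(f_{2})\circ\Ad(f_{1})$, $\Ad(f^{-1}) = \Ad(f)^{-1}$, and $g\circ(f_{1}\circ f_{2}) = (g\circ f_{1})\circ f_{2}$, the claim for a product follows from the claim for each factor (with appropriately shifted outer index), so it suffices to verify CLR when $f$ is one of the generators $\mathcal{R}$, $J^{j-1}$ or $H^{j}_{0}$ for some $0\leq j\leq k$, while $g$ still ranges over all of $G^{n-1}$. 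The affine subcase $f(x) = \alpha x+\beta\in\Aff\subset\mathcal{R}$ is immediate: $\Ad(f)(\id+\phi) = \id + \alpha^{-1}\phi\circ f$, and $\alpha^{-1}a\in\mathcal{FC}^{n}_{i}$ for any $a\in\mathcal{FC}^{n}_{i}$ by the vector-space axiom $A3$.

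For a non-affine generator, writing $\chi(x) := \Ad(f)(\id+\phi)(x)-x$ and expanding the defining equation $f(x+\chi(x)) = f(x)+\phi(f(x))$ via Lagrange inversion yields
\[
\chi(x) = \frac{\phi(f(x))}{f'(x)} + \sum_{\ell\geq 2}c_{\ell}(x)\,\phi(f(x))^{\ell},
\]
with $c_{\ell}$ rational in the higher derivatives of $f$. Writing $\phi = a\circ\exp^{n}\circ g$ with $a\in\mathcal{FC}^{n}_{i}$ and substituting $u = \exp^{n}(g(f(x)))$, the leading term becomes $a(u)\cdot (f^{-1})'(g^{-1}(\ln^{n}u))$ (using the identity $1/f'(f^{-1})=(f^{-1})'$). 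Axiom $A7$---stability of $\mathcal{FC}^{n}_{i}$ under pointwise multiplication by $\exp^{k}\circ G^{n-1}\circ\ln^{n}$---places the leading term in $\mathcal{FC}^{n}_{i}$ provided $(f^{-1})'\circ g^{-1}$ can be written as $\exp^{k}\circ q$ with $q\in G^{n-1}$. The higher-order terms are majorized, using the upper bound $a<e^{\mu\cdot}$ ($\mu<0$) from $A1$, by $e^{\ell\mu F(x)}$ for $F = \exp^{n}\circ g\circ f$, which as a function of $u=F(x)$ lies in $\mathcal{FC}^{n}_{i}$ via $A1$, $A3$, and the closure $e^{\mathcal{FC}^{n}_{i}} = 1+\mathcal{FC}^{n}_{i}$ from $A7$; the prefactors $c_{\ell}$ are handled exactly as $1/f'$ in the leading term.

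The main obstacle is the generator-by-generator verification that $(f^{-1})'\circ g^{-1}$ indeed factors through $\exp^{k}\circ G^{n-1}$ in the sense required by $A7$: for $f\in H^{j}_{0}$ (so $f = \id+\theta$ with $\theta$ exponentially small) $(f^{-1})'$ is close to the constant $1$, which is not itself in $G^{n-1}$, so the near-constant piece has to be absorbed using $A3$ while the exponentially small correction is controlled by $A1$; for $f\in J^{j-1}$, a $G^{j-1}$-conjugate of $A^{j-1}\mathcal{NC}$, one unwinds through axiom $A2$, the group structure of $G^{n-1}$, and the $\sim_{n}$-invariance coming from $SL1_{n,i}$ and $OT_{n}$. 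Collecting all cases yields the stated inclusion.
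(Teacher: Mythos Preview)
Your approach diverges substantially from the paper's, and it has a genuine gap.

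The paper proves $CLR_{k,n,i}$ in essentially one line once $ADT_{k}$ is available (which it is, since $k<n$ and the outer induction has already established $ADT$ at all lower levels): write $f^{-1}=a+\sum_{j}\phi_{j}$ with $a$ affine and $\phi_{j}\in\mathcal{F}^{m_{j}}_{\bar h_{j}}$, $m_{j}\le k-1$; then $a\circ(\id+\psi)=a+\alpha\psi$ by $A3$, and each $\phi_{j}\circ(\id+\psi)\in\phi_{j}+\mathcal{F}^{n}_{ig}$ by the appropriate $SL2_{m_{j},n}$; summing gives $f^{-1}\circ(\id+\psi)\in f^{-1}+\mathcal{F}^{n}_{ig}$, and postcomposing with $f$ yields the claim. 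No derivatives, no Lagrange inversion, no analytic estimates.

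Your Lagrange-inversion route breaks at the point where you need $(f^{-1})'\circ g^{-1}$ to factor as $\exp^{k}\circ q$ with $q\in G^{n-1}$. Nothing in the axiom list says anything about derivatives of elements of $G^{k}$; the axioms are closure properties under composition, addition, and a few specific pointwise multiplications, never under differentiation, so there is simply no hook for $A7$ here. The same problem recurs for the coefficients $c_{\ell}$. Worse, your treatment of the higher-order tail argues that a function majorized by $e^{\ell\mu u}$ ``lies in $\mathcal{FC}^{n}_{i}$ via $A1$, $A3$, $A7$''. Axiom $A1$ is a \emph{constraint} on members of $\mathcal{FC}^{n}_{i}$, not an admission criterion: the sets $\mathcal{FC}^{n}_{i}$ are abstract and may be very sparse, and satisfying the right decay does not put a function inside them. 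Finally, invoking $OT_{n}$ is circular in the paper's induction scheme, since $OT_{n}$ sits downstream of $CLR_{n-1,n}$ (through $SL3a_{n}$ and $SL3b_{n}$).

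The fix is precisely the paper's move: replace the analytic expansion of $f^{-1}\circ(\id+\psi)$ by the \emph{additive} expansion of $f^{-1}$ furnished by $ADT_{k}$, and let $SL2$ do the work you were trying to squeeze out of $A7$ and derivative estimates.
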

	
	\subsection{The big induction}
	
	This entire subsection is dedicated to a proof of Sufficiency of the axioms together with the following supplement:
	
	\begin{Proposition}[Supplement to Sufficiency of axioms]\label{PropSupplAxioms}
		Suppose that $\mathcal{FC}_{i}^{n}$ are defined for all $n \leq N$ and $i = 0, 1$, satisfying all the axioms. Then:
		
		\begin{enumerate}
			\item{$ADT_{n}$ holds for all $n \leq N$.}
			\item{For all $n \leq N$, every $f \in G^{n}$ admits some $\lambda > \mu > 0$ such that for $x$ large enough:
				
				\[\lambda x \leq f(x) \leq \mu x\]
				
				Both bounds still hold even if only the upper bound of $A1_{n, 0}$ is known}
		\end{enumerate}
	\end{Proposition}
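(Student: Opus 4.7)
The plan is to treat both items simultaneously with the big induction that drives the Shift, Conjugation, Ordering, Inclusion and Multiplicative Decomposition Theorems at level $n$. Once that machinery is in place up to level $N$, both parts of the supplement fall out with little additional effort; I would record them separately only because part (1) is the statement that feeds the Dulac-style zero-free conclusion, and part (2) is the quantitative envelope used implicitly throughout.

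For part (1), I would start from $MDT_n$, which writes $g \in G^n$ as
\[
g = a \circ (\id + \phi_1) \circ \cdots \circ (\id + \phi_v) \circ (\id + \psi_1) \circ \cdots \circ (\id + \psi_w),
\]
with the couples $(m_i, \bar{f}_i)$ strictly increasing and the $\bar{g}_j$ strictly increasing at the top level $n$. To turn this into the additive ADT form I would collapse the composition one factor at a time from the right. At each stage the partial product $h$ to the left is already in ADT form, and all its summands sit at levels strictly above the incoming factor $(\id + \chi)$ (this is exactly the ordering MDT arranges). Applying $SL2$ to each summand of $h$ then yields $h \circ (\id + \chi) - h$ in the same functional cochain class as $\chi$, so the composition unfolds additively as $h + \chi + (\text{correction absorbed at the level of } \chi)$. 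Iterating produces the ADT sum. The one subtle case is $k = n$, where $SL2$ requires $i = j$; this is exactly why MDT segregates the top-level $\psi$'s from the strictly lower-level $\phi$'s.

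For part (2), I would induct on $n$. The base case $n = 0$ is immediate from admissibility axiom 4: every $f \in \mathcal{R}$ is either affine or satisfies $|f(x) - a(x)| < e^{-\mu x}$ for some $a \in \Aff$, and in either case $f$ lies between two positive linear functions for $x$ large. For the inductive step, the generators of $G^n$ beyond $G^{n-1}$ are elements of $H^n_0$ and $J^{n-1}$, which by construction (together with $A2$, $CL3$ and $SL4a$) are of the form $\id + \eta$ with $\eta \in \mathcal{F}^n_{0g}$ or $\eta \in \mathcal{F}^{n-1}_{1g}$. The upper bound in $A1$ then gives $|\eta(x)| \le e^{\mu \exp^n g(x)}$ with $\mu < 0$, which vanishes super-exponentially, so each such generator is sandwiched between $(1-\varepsilon)x$ and $(1+\varepsilon)x$ for $x$ large. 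A sandwich of the form $\mu x \le f(x) \le \lambda x$ with $\mu, \lambda > 0$ is preserved under composition and under compositional inversion, so $G^n$, being finitely generated under these operations, is sandwiched. Only the upper bound in $A1_{n,0}$ is used in this argument; the lower bound plays no role until the Dulac-style zero-free conclusion at the very end.

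The main obstacle, in my view, is the bookkeeping for part (1): at each collapsing step I must verify that the partial sum still lives in the correct functional cochain class after being perturbed by the next $\chi$, which is exactly what $A4$ and $A5$ are built to guarantee. Once one trusts that those axioms package the right closure properties, neither part of the proposition presents a conceptual difficulty; the challenge is purely in matching indices correctly while unwinding the $MDT$ decomposition from the outside in.
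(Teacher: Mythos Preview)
Your proposal is correct and matches the paper's own argument closely. Part (1) is exactly the paper's step~16 of the big induction: peel off the rightmost MDT factor, apply $SL2$ to each summand of the already-additively-decomposed left part, and absorb the correction into the class of the new factor using $A3$. One terminological slip: you write that the summands of $h$ sit ``strictly above'' the incoming $\chi$; in the MDT ordering they sit at strictly \emph{smaller} index pairs $(m,\bar f)$ than $\chi$, which is precisely the hypothesis $SL2$ needs (your subsequent sentence shows you have the direction right, so this is only a wording issue).

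For part (2) you take a slightly different route than the paper. The paper, in point~14(a)iii of the induction, obtains the linear upper bound by first invoking $ADT_k$ and then bounding each summand; you instead bound the generators $H^n_0$ and $J^{n-1}$ directly as $\id+\eta$ with $\eta$ super-exponentially small (using $SL4a_{n-1}$ for $J^{n-1}$) and then use that a two-sided linear sandwich is stable under composition and inversion. Both arguments are valid; yours has the advantage that it makes the clause ``both bounds still hold even if only the upper bound of $A1_{n,0}$ is known'' completely transparent, since you never touch the lower bound of $A1$, whereas the paper's route through $ADT_k$ passes through $OT_k$, whose total-ordering step does use the lower bound. The paper handles this by the parenthetical remark that inverses turn upper bounds into lower bounds, but your generator-based argument is cleaner on this point.
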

	
	So let us start:
	
	\begin{proof}[Proof of sufficiency of axioms and supplement.]
		Now, $IT_{n}$ clearly follows by induction from $A2a$ and then finiteness would follow from the usual Dulac argument using $ADT$, namely given a $\Delta \in G_{n}$, then it is either $\id$ or in the decomposition given by $ADT$, $\Delta - \id$ has a first non-zero term which dominates the others and prevents zeroes arbitrarily close to $0$ (as for why the first term in $ADT$ dominates, see point 14 of the induction).\\ 
		
		So let us talk about the structure of the induction needed to prove $ADT$. We represent the structure diagrammatically to give an overview of the inductive structure of this proof. The horizontal level indicates roughly how close results are to the axioms. Arrows in black go from above to below, arrows in green stay in the same level and arrows in red go back up the hierarchy, indicating induction. This diagram has been significantly simplified, so while this indicates the order in which things are proven, not all dependencies are written down explicitly, e.g. ADT directly uses SL2 even though there is no direct arrow between them.
		
		\[\hspace{-3.5cm}\begin{tikzcd}
			\text{Axioms}\arrow{dd}{}\arrow{ddr}{}\arrow{ddrr}{}&&\\&\\
			SL2 + CLR\arrow{dd}{}\arrow{ddr}{}& SL4b\arrow{ddr}{} & SL1\arrow{ddddddl}{}\\&\\
			SL3a\arrow{dd}{} & SL3b\arrow{ddl}{}\arrow[green]{r}{}\arrow{ddr}{} & \arrow{ddl}{}SL4a\arrow{ddddl}{} &\\&\\
			CL1\arrow{ddr}{}\arrow[green]{r}{} & CL2\arrow{ddl}{} & CL3\arrow{ddll}{}\\&\\
			ST\arrow{dd}{} & OT\arrow{ddl}{}\\&\\
			MDT\arrow{dd}{}\\&\\
			ADT\arrow[red, bend left = 25]{uuuuuuuuuu}{}
		\end{tikzcd}\]
		
		At this point it is clear that the induction, modulo the exact indices, works.\\
		
		The proof for the case $n = 0$ will not differ from the induction step much. So instead of proceeding as usual and first doing the base of induction, we will show the entire induction step and then at the end we will remark on what changes for the base case $n = 0$.\\
		
		\underline{\textbf{The induction step:}}\\
		
		Now we can go into each step of the induction, always assume that $n \geq 1$, $0 \leq k \leq n - 1$:
		
		\begin{enumerate}
			\item{\underline{$A4a_{n, i} \Rightarrow SL1_{n, i}$}\\
				
				We need to prove that for $g \in \bar{\id} \subset G^{n}$:
				
				\[\mathcal{FC}_{i}^{n}\circ \exp^{n}\circ g  \subset \mathcal{FC}_{i}^{n}\circ \exp^{n}\]
				
				Precomposing with $\ln^{n}$ on both sides we get equivalently:
				
				\[\mathcal{FC}_{i}^{n}\circ A^{-n}(g)\subset \mathcal{FC}_{i}^{n}\]
				
				which is exactly $A4a_{n, i}$.}
			\item{\underline{$CLR_{-1, n, i}$}\\
				
				Brute force calculation with $A3_{n}$.}
			\item{\underline{$CLR_{- 1, n, j} + A5a_{0, n, i, j} \Rightarrow SL2_{0, n, i, j}$}\\
				
				Let $f \in G^{- 1}$ and $g \in G^{n - 1}$. Then we need to prove that for $\phi \in \mathcal{FC}^{0}_{i}$:
				
				\[\phi\circ f\left(\id + \mathcal{FC}_{j}^{n}\circ\exp^{n}\circ g\right) - \phi\circ f \subset \mathcal{FC}_{j}^{n}\circ\exp^{n}\circ g\]
				
				Precomposing with $f^{-1}$ we need to prove that:
				
				\[\phi\left[\Ad(f^{-1})\left(\id + \mathcal{FC}_{j}^{n}\circ\exp^{n}\circ g\right)\right] - \phi \subset \mathcal{FC}_{j}^{n}\circ\exp^{n}\circ g\circ f^{-1}\]
				
				From $CLR_{- 1, n, j}$ we need to prove that:
				
				\[\phi\left(\id + \mathcal{FC}_{j}^{n}\circ\exp^{n}\circ g\circ f^{-1}\right) - \phi \subset \mathcal{FC}_{j}^{n}\circ\exp^{n}\circ g\circ f^{-1}\]
				
				or, precomposing with $f \circ g^{-1} \circ \ln^{n}$:
				
				\[\phi\left(f \circ g^{-1} \circ \ln^{n} + \mathcal{FC}_{j}^{n}\right) - \phi \circ f \circ g^{-1} \circ \ln^{n} \subset \mathcal{FC}_{j}^{n}\]
				
				This is just $A5a_{0, n, i, j}$.}
			\item{\underline{$CLR_{k - 1, n, j} + SL2_{k - 1, n, i, j} + A5a_{k, n, i, j} + A5b_{n, i} + A5c_{h, n, j}, h < k \Rightarrow SL2_{k, n, i, j}$}\\
				
				Let $f \in G^{k - 1}$ and $g \in G^{n - 1}$, satisfying $\bar{f} < \bar{g}$ if $k = n$. Then what we need to prove is that for $\phi \in \mathcal{FC}^{k}_{i}$:
				
				\[\phi\circ\exp^{k}\circ f\left(\id + \mathcal{FC}_{j}^{n}\circ\exp^{n}\circ g\right) - \phi\circ\exp^{k}\circ f \subset \mathcal{FC}_{j}^{n}\circ\exp^{n}\circ g\]
				
				Precomposing with $f^{-1}\circ \ln^{k}$ we need to prove that:
				
				\[\phi\left[A^{-k}\left(\Ad(f^{-1})\left(\id + \mathcal{FC}_{j}^{n}\circ\exp^{n}\circ g\right)\right)\right] - \phi \subset \mathcal{FC}_{j}^{n}\circ\exp^{n}\circ g\circ f^{-1}\circ \ln^{k}\]
				
				From $CLR_{k - 1, n, j}$ we need to prove that:
				
				\[\phi\left[\exp^{k}\circ \left(\id + \mathcal{FC}_{j}^{n}\circ\exp^{n}\circ g\circ f^{-1}\right)\circ \ln^{k}\right] - \phi \subset \mathcal{FC}_{j}^{n}\circ\exp^{n}\circ g\circ f^{-1}\circ \ln^{k}\]
				
				Now we want to prove that:
				
				\[\exp^{k}\circ \left(\id + \mathcal{FC}_{j}^{n}\circ\exp^{n}\circ g\circ f^{-1}\right) - \exp^{k}\subset  \mathcal{FC}_{j}^{n}\circ\exp^{n}\circ g\circ f^{-1}\]
				
				We do this by induction on $k$, so suppose this proven for $k = h$, then we consider the case $k = h + 1$
				
				\[\exp^{h + 1}\circ \left(\id + \mathcal{FC}_{j}^{n}\circ\exp^{n}\circ g\circ f^{-1}\right) - \exp^{h + 1}\subset  \mathcal{FC}_{j}^{n}\circ\exp^{n}\circ g\circ f^{-1}\]
				
				by induction it suffices to prove that:
				
				\[\exp\circ \left(\exp^{h} + \mathcal{FC}_{j}^{n}\circ\exp^{n}\circ g\circ f^{-1}\right) - \exp^{h + 1}\subset  \mathcal{FC}_{j}^{n}\circ\exp^{n}\circ g\circ f^{-1}\]
				
				precomposing with $f \circ g^{-1} \circ \ln^{n}$ we get that we need to prove that:
				
				\[\exp\circ \left(\exp^{h}\circ f \circ g^{-1} \circ \ln^{n} + \mathcal{FC}_{j}^{n}\right) - \exp \circ \exp^{h} \circ f \circ g^{-1} \circ \ln^{n}\subset  \mathcal{FC}_{j}^{n}\]
				
				Note that this is just $A5c_{h, n, j}$ and we are done with this part. From this we get that:
				
				\[\exp^{k}\circ \left(\id + \mathcal{FC}_{j}^{n}\circ\exp^{n}\circ g\circ f^{-1}\right)\circ \ln^{k}\subset \id +  \mathcal{FC}_{j}^{n}\circ\exp^{n}\circ g\circ f^{-1} \circ \ln^{k}\]
				
				So we only need to prove that:
				
				\[\phi\left[\id + \mathcal{FC}_{j}^{n}\circ\exp^{n}\circ g\circ f^{-1}\circ \ln^{k}\right] - \phi \subset \mathcal{FC}_{j}^{n}\circ\exp^{n}\circ g\circ f^{-1}\circ \ln^{k}\]
				
				We can then precompose by $\exp^{k}\circ f \circ g^{-1} \circ \ln^{n}$ to get:
				
				\[\phi\left[\exp^{k}\circ f \circ g^{-1} \circ \ln^{n} + \mathcal{FC}_{j}^{n}\right] - \phi\circ \exp^{k}\circ f \circ g^{-1} \circ \ln^{n} \subset \mathcal{FC}_{j}^{n}\]
				
				Then depending on whether $k = n - 1$ this is either $A5a_{k, n, i, j}$ or $A5b_{n, i}$, in the second case an elementary calculation shows that $fg^{-1} <\id$.}
			\item{\underline{$SL2_{h, n, 1, i}, h < k - 1 + SL2_{k - 1, n, 0, i}  + ADT_{k} \Rightarrow CLR_{k, n, i}, k \leq n - 1$}\\
				
				We need to prove that for $f \in G^{k}$ and $g\in G^{n - 1}$:
				
				\[f^{-1}\circ(\id + \mathcal{F}^{n}_{ig})\circ f \subset \id + \mathcal{F}^{m}_{igf}\]
				
				Using $ADT_{k}$ we can decompose:
				
				\[f^{-1} = a + \sum \phi_{j}\]
				
				$a$ being the affine part. Now by $A3_{n, i}$:
				
				\[a \circ (\id + \mathcal{F}^{n}_{ig}) \subset a + \mathcal{F}^{n}_{ig}\]
				
				Then by the appropriate $SL2_{h, n, j, i}$ for all $h \leq k - 1$ together with $A3_{n, i}$:
				
				\[\left(\sum \phi_{j}\right)\circ (\id + \mathcal{F}^{n}_{ig}) \subset \sum \phi_{j} + \mathcal{F}^{n}_{ig}\]
				
				Combining the two we get:
				
				\[f^{-1}\circ(\id + \mathcal{F}^{n}_{ig})\circ f \subset (f^{-1} + \mathcal{F}^{n}_{ig}) \circ f = \id + \mathcal{F}^{m}_{igf}\]}
			\item{\underline{$CLR_{n - 1, n, i} + A4b_{n, i} + A5c_{h, n, i}, h < n \Rightarrow SL3a_{n, i}$}\\
				
				We need to prove that for $f, g \in G^{n - 1}, \bar{g} \leq \bar{f}$:
				
				\[\mathcal{FC}_{i}^{n}\circ\exp^{n}\circ f (\id + \mathcal{FC}_{i}^{n}\circ\exp^{n}\circ g) \subset \mathcal{FC}_{i}^{n}\circ\exp^{n}\circ f\]
				
				Or upon precomposing by $f^{-1}\circ \ln^{n}$ we get:
				
				\[\mathcal{FC}_{i}^{n}\circ \Ad(\ln^{n})\left[\Ad(f^{-1})(\id + \mathcal{FC}_{i}^{n}\circ\exp^{n}\circ g)\right] \subset \mathcal{FC}_{i}^{n}\]
				
				By $CLR_{n - 1, n, i}$ this means proving that:
				
				\[\mathcal{FC}_{i}^{n}\circ \Ad(\ln^{n})(\id + \mathcal{FC}_{i}^{n}\circ\exp^{n}\circ g\circ f^{-1}) \subset \mathcal{FC}_{i}^{n}\]
				
				Using $A5c_{h, n, i}$ as in point $4$ we see that we only need to prove that:
				
				\[\mathcal{FC}_{i}^{n}\circ (\id + \mathcal{FC}_{i}^{n}\circ\exp^{n}\circ g\circ f^{-1}\circ \ln^{n}) \subset \mathcal{FC}_{i}^{n}\]
				
				This is exactly $A4b_{n, i}$}
			\item{\underline{$CLR_{n - 1, n, i} + A7_{n, i} + A6_{n, i} \Rightarrow SL3b_{n, i}$}\\
				
				So we need to prove that for $g \in G^{n - 1}$:
				
				\[(\id + \mathcal{F}_{ig}^{n})^{-1} \subset \id + \mathcal{F}_{ig}^{n}\]
				
				By $CLR_{n - 1, n, i}$:
				
				\[\Ad(g)(\id + \mathcal{F}_{i\id}^{n}) \subset (\id + \mathcal{F}_{ig}^{n})\]
				
				It is not hard to see that this is equivalent to:
				
				\[(\id + \mathcal{F}_{i\id}^{n}) \subset \Ad(g^{-1})(\id + \mathcal{F}_{ig}^{n})\]
				
				So by $CLR_{n - 1, n, i}$ again, we know that
				
				\[\Ad(g)(\id + \mathcal{F}_{i\id}^{n}) = (\id + \mathcal{F}_{ig}^{n})\]
				
				We now claim that for $k \leq n$:
				
				\[A^{-k}(\id + \mathcal{FC}_{i}^{n}\exp^{n}) = (\id + \mathcal{FC}_{i}^{n}\exp^{n - k})\]
				
				we prove this by induction on $k$, it is clear that all we need to prove is that for all $k < n$:
				
				\[\exp \circ (\id + \mathcal{FC}_{i}^{n}\exp^{n - k}) \circ \ln = (\id + \mathcal{FC}_{i}^{n}\exp^{n - k - 1})\]
				
				And:
				
				\[\exp \circ (\id + \mathcal{FC}_{i}^{n}\exp^{n - k}) \circ \ln = \exp \circ (\ln + \mathcal{FC}_{i}^{n}\exp^{n - k - 1})\]
				
				remembering that $\cdot$ stands for pointwise multiplication, then by axiom $A7_{n, i}$:
				
				\[\exp \circ (\id + \mathcal{FC}_{i}^{n}\exp^{n - k}) \circ \ln = \id \cdot\left[\exp \circ (\mathcal{FC}_{i}^{n}\exp^{n - k - 1})\right]\]
				
				which is equal to:
				
				\[\id\cdot(1 + \mathcal{FC}_{i}^{n}\exp^{n - k - 1}) = \id + \mathcal{FC}_{i}^{n}\exp^{n - k - 1}\]
				
				So we are done with this part. In particular we can conclude that:
				
				\[A^{-n}(\id + \mathcal{FC}_{i}^{n}\exp^{n}) = (\id + \mathcal{FC}_{i}^{n})\]
				
				So it suffices to show that $(\id + \mathcal{FC}_{i}^{n})$ is closed under compositional inverses, which is exactly $A6_{n, i}$.}
			\item{\underline{$A4c_{n, i} \Rightarrow SL4b_{n, i}$}\\
				
				Let $i = 0, 1$. We need to prove that for $g \in G^{n - 1}$:
				
				\[\mathcal{FC}_{i}^{n}\circ \exp^{n} \circ g \circ J^{n - 1} \subset \mathcal{FC}_{i}^{n}\circ \exp^{n} \circ g\]
				
				precomposing with $g^{-1} \circ \ln^{n}$ we get:
				
				\[\mathcal{FC}_{i}^{n} \circ A^{-n}(\Ad(g^{-1})(J^{n - 1})) \subset \mathcal{FC}_{i}^{n}\]
				
				Now by definition:
				
				\[J^{n - 1} = \Ad(G^{n - 1})A^{n - 1}\mathcal{NC}\]
				
				So $\Ad(g^{-1})(J^{n - 1}) \subset J^{n - 1}$, so we are only left with:
				
				\[\mathcal{FC}_{i}^{n} \circ A^{-n}(J^{n - 1}) \subset \mathcal{FC}_{i}^{n}\]
				
				Which is exactly $A4c_{n, i}$.}
			\item{\ul{$ST_{n} + CLR_{n - 1, n, 1} + SL2_{n - 1, n, 1, 1} + A2b_{n} + SL4a_{n - 1} + A8_{n} + SL4b_{n, 1} + CL1_{n - 1, 1}$ $ + SL2_{n - 1, n - 1, 1, 1} \Rightarrow SL4a_{n}$}\\
				
				So we need to prove that:
				
				\[J^{n} = \Ad(G^{n})A^{n}\mathcal{NC} \subset H^{n}_{1}\]
				
				By $A2b_{n}$ we know that $A^{n}\mathcal{NC} \subset H^{n}_{1}$ so we only need to prove:
				
				\[\Ad(G^{n})H^{n}_{1} \subset H^{n}_{1}\]
				
				By $ST_{n}$ we have $G^{n} = G^{n - 1}J^{n - 1}H^{n}_{0}$. So we only need to prove:
				
				\[\Ad(H^{n}_{0})\left[\Ad(J^{n - 1})\left[\Ad(G^{n - 1})(H^{n}_{1})\right]\right] \subset H^{n}_{1}\]
				
				Because conjugation commutes with composition it suffices to prove that for a $g \in G^{n - 1}$:
				
				\[\Ad(H^{n}_{0})\left[\Ad(J^{n - 1})\left[\Ad(G^{n - 1})(\id + \mathcal{F}^{n}_{1g})\right]\right] \subset H^{n}_{1}\]
				
				By $CLR_{n - 1, n, 1}$ conjugating by an element in $G^{n - 1}$ can only change the $g$ below, so it suffices to prove that:
				
				\[\Ad(H^{n}_{0})\left[\Ad(J^{n - 1})(\id + \mathcal{F}^{n}_{1g})\right] \subset H^{n}_{1}\]
				
				By $SL4a_{n - 1}$ we have $J^{n - 1} \subset H^{n - 1}_{1}$. This means that a $j \in J^{n - 1}$ (or $j^{-1}$ as well) can be written as:
				
				\[j \in (\id + \mathcal{F}_{1g_{1}}^{n - 1})\circ \cdots \circ (\id + \mathcal{F}_{1g_{h}}^{n - 1})\]
				
				Using $CL1_{n - 1, 1}$ to reorder if necessary we can use $SL2_{n - 1, n - 1, 1, 1}$ to write $j$ as:
				
				\[j \in \id + \sum_{\ell = 1}^{h}\mathcal{F}_{1g_{\ell}}^{n - 1}\]
				
				Arguing using $SL2_{n - 1, n, 1, 1}$ we can get as in point 5:
				
				\[\Ad(j)(\id + \mathcal{F}^{n}_{1g}) \subset \id + \mathcal{F}^{n}_{1g}\circ j\]
				
				So from $SL4b_{n, 1}$ we may conclude:
				
				\[\Ad(J^{n - 1})(\id + \mathcal{F}^{n}_{1g}) \subset \id + \mathcal{F}^{n}_{1g}\]
				
				So all we need to prove still is:
				
				\[\Ad(H^{n}_{0})(\id + \mathcal{F}^{n}_{1g}) \subset H^{n}_{1}\]
				
				But by axiom $A8_{n}$, $H^{n}_{0} \subset H^{n}_{1}$ and we are done.}
			\item{\underline{$SL2_{n - 1, n, i, i} + SL3a_{n, i} + SL3b_{n, i} \Rightarrow CL1_{n, i}$}\\
				
				Let $f, g \in G^{n - 1}$, $\bar{f} < \bar{g}$, $i = 0, 1$, let $\phi, \tilde{\phi} \in \mathcal{F}^{n}_{if}$ using $SL3b_{n, i}$ be such that:
				
				\[(\id + \phi)^{-1} = \id + \tilde{\phi}\]
				
				Let $\psi \in \mathcal{F}^{n}_{ig}$ we need to prove that:
				
				\[(\id + \tilde{\phi})(\id + \psi)(\id + \phi) \in \id + \mathcal{F}_{ig}^{n}\]
				
				Now by $SL2_{n - 1, n, i, i}$ and $A3_{n, i}$:
				
				\[(\id + \tilde{\phi})(\id + \psi) \in \id + \psi + \tilde{\phi} + \mathcal{F}^{n}_{ig} \subset (\id + \tilde{\phi}) + \mathcal{F}^{n}_{ig}\]
				
				Then using $SL3a_{n, i}$:
				
				\[\left[(\id + \tilde{\phi}) + \mathcal{F}^{n}_{ig}\right](\id + \phi) \subset \id + \mathcal{F}^{n}_{ig}\]}
			\item{\underline{$SL2_{n - 1, n, 1, 0} + SL2_{n - 1, n - 1, 1, 1} + SL3b_{n, 0} + SL4b_{n, 0} + SL4a_{n - 1} + CL1_{n - 1, 1} \Rightarrow CL2_{n}$}\\
				
				So we need to prove that for all $f \in G^{n - 2}$:
				
				\[\Ad(J^{n - 1})H^{n}_{0} \subset H^{n}_{0}\]
				
				Because conjugation commutes with composition combined with $SL3b_{n, 0}$ it suffices to consider for all $g \in G^{n - 1}$:
				
				\[\Ad(J^{n - 1})(\id + \mathcal{F}^{n}_{0g}) \subset H^{n}_{0}\]
				
				As in point 9, we have that:
				
				\[\Ad(J^{n - 1})(\id + \mathcal{F}^{n}_{0g}) \subset \id + \mathcal{F}^{n}_{0g}\circ J^{n - 1}\]
				
				Using $SL4b_{n, 0}$ we are done.}
			\item{\underline{$CLR_{n - 1, n, 0} + SL3b_{n, 0} \Rightarrow CL3_{n}$}\\
				
				We need to prove that:
				
				\[\Ad(G^{n - 1})(H^{n}_{0}) \subset H^{n}_{0}\]
				
				Conjugation commutes with composition combined with $SL3b_{n}$ says that it suffices to show that for all $g \in G^{n - 1}$:
				
				\[\Ad(G^{n - 1})(\id + \mathcal{F}^{n}_{0g}) \subset H^{n}_{0}\]
				
				And this follows from $CLR_{n - 1, n}$.}
			\item{\underline{$CL2_{n} + CL3_{n} + A8_{h} + SL4a_{h} + ST_{h}, h < n \Rightarrow ST_{n}$}\\
				
				Let us go across the statements one by one:
				
				\begin{enumerate}
					\item{By definition:
						
						\[J^{n - 1} = \Ad(G^{n - 1})A^{n - 1}\mathcal{NC}\]
						
						So obviously for $j \in J^{n - 1}$ and $f \in G^{n - 1}$:
						
						\[jf = f\Ad(f)(j) \in G^{n - 1}J^{n - 1}\]}
					\item{That $H^{n}_{0}J^{n - 1} \subset J^{n - 1}H^{n}_{0}$ follows easily from $CL2_{n}$.}
					\item{That $H^{n}_{0}G^{n - 1} \subset G^{n - 1}H^{n}_{0}$ follows easily from $CL3_{n}$}
					\item{$\mathcal{R}H_{1}^{0} \subset \mathcal{A}ffH_{1}^{0}$ obviously follows from $A2a_{0}$ and $A8_{0}$ as follows:
						
						\[\mathcal{R}H_{1}^{0} \subset \Aff \mathcal{R}^{0}H_{1}^{0} \subset \Aff H_{0}^{0}H_{1}^{0} \subset \Aff H_{1}^{0}\].}
					\item{The first form of $G^{n}$ follows easily from induction using the $ST_{h}$.}
					\item{We need to prove that $H_{0}^{h}J^{h} \subset H^{h}_{1}$ for $h < n$ to get to the final form of elements in $G^{n}$, by $A8_{h}$, $H_{0}^{h} \subset H^{h}_{1}$ and by $SL4a_{h}$ we get that $J^{h} \subset H^{h}_{1}$, so we are done.}
			\end{enumerate}}
			\item{\ul{$A1_{h, 0} + A1_{h, 1} + ADT_{h} + SL4a_{h}, h < n - 1 + A1_{n - 1, 0} + ADT_{n - 1} + CL1_{n}$ $ + SL1_{n} + SL3a_{n}$ $ + SL3b_{n} \Rightarrow OT_{n}$}\\
				
				Let us go over the statements one by one:
				
				\begin{enumerate}
					\item{It suffices to prove that $G^{n - 1}$ is totally ordered, so it suffices to prove that the difference of two distinct elements in $G^{n - 1}$ is eventually either positive or negative. By $ADT_{n - 1}$ it suffices to prove that any 
						
						\[g = a + \phi_{1} + \cdots + \phi_{v} + \psi_{1} + \cdots + \psi_{w}\]
						
						where $a \in \mathcal{A}ff$, $\phi_{i} \in \mathcal{F}^{m_{i}}_{1\bar{f}_{j}}$, $\psi_{j} \in \mathcal{F}^{n - 1}_{0\bar{g}_{j}}$ in such a way that each $m_{i} < n - 1$ and:
						
						\[(m_{i}, \bar{f}_{i}) < (m_{i + 1}, \bar{f}_{i + 1})\]
						
						\[\bar{g}_{j} < \bar{g}_{j + 1}\]
						
						is eventually either positive or negative. For this we need to consider multiple statements on growth:
						
						\begin{enumerate}
							\item{By $A1_{k}$, for each $k < n$ and each $\alpha \in \mathcal{FC}_{i}^{k}$ there exists $\lambda < \mu < 0$ such that in the same ordering as on $G^{n - 1}$:
								
								\[e^{\lambda x} < \alpha < e^{\mu x}\]}
							\item{By strong induction for every $k \leq n - 1$, any $f \in G^{k}$ grows to plus infinity. This is clear for $k = -1$, for $k = 0$ this is true by admissibility. Now by definition:
								
								\[G^{k} = \langle G^{k - 1}, J^{k - 1}, H^{k}_{0}\rangle\]
								
								the composition of two functions going to $+\infty$ goes to $+\infty$ the compositional inverse of a function going to $+\infty$ goes to $+\infty$, so by induction it suffices to show that $J^{k - 1}$ and $H^{k}_{0}$ contains only functions going to $+\infty$.\\
								
								By definition of $H^{k}_{0}$ and induction every function in $H^{k}_{0}$ is dominated by the identity part so goes to $+\infty$.\\
								
								By $SL4a_{k - 1}$ we have $J^{k - 1} \subset H^{k - 1}_{1}$ and by induction assumption any function in $H^{k - 1}_{1}$ is dominated by the identity part and goes to $+\infty$.}
							\item{By strong induction for every $k \leq n - 1$, any $f \in G^{k}$ grows at most linearly in absolute value. Indeed, this is clearly true for $G^{-1}$, suppose this proven up to $k - 1$, by $ADT_{k}$ combined with the previous growth estimates and the strong induction hypothesis one sees easily that any element in $G^{k}$ grows at most linearly.}
							\item{The final statement is that the leading term is dominant, in the sense that all other terms are small-$o$ of the leading term.\\
								
								If after the leading term the $m_{i}$ increases this is obvious by the previous estimates. So suppose $m_{i}$ does not immediately increase. Note that we can never be talking about a leading affine term.\\ 
								
								So let us fix an $m \leq n$ and let us take $f, g \in G^{m - 1}$ with $\bar{f} < \bar{g}$ what we want to prove is that for any $\alpha, \beta \in \mathcal{FC}^{m}_{i}$:
								
								\[\lim_{x \to \infty}\frac{\alpha(\exp^{m}(f(x)))}{\beta(\exp^{m}(g(x)))} = 0\]
								
								or alternatively by the first growth estimate we want for all $\lambda, \mu < 0$:
								
								\[\lim_{x \to \infty}\frac{e^{\lambda\exp^{m}(f(x))}}{e^{\mu\exp^{m}(g(x))}} = 0\]
								
								But note that this follows exactly from $\bar{f} \neq \bar{g}$, by definition of the relation $\sim_{m}$ together with the fact that $\bar{f} < \bar{g}$.} 
						\end{enumerate}
						
						So we know the leading term in the difference of two elements in $G^{n - 1}$ is dominant enough to determine whether the function eventually becomes positive or negative, noting that the decomposition given by $ADT$ is finite. So indeed $G^{n - 1}$ is totally ordered.}
					\item{What we need to prove is that if $f \sim_{n} g$, then:
						
						\[\mathcal{FC}^{n}_{i}\exp^{n}\circ f = \mathcal{FC}^{n}_{i}\exp^{n}\circ g\]
						
						So it suffices to prove that:
						
						\[\mathcal{FC}^{n}_{i}\exp^{n}\circ f\circ g^{-1} = \mathcal{FC}^{n}_{i}\exp^{n}\]
						
						It is easy to prove that then $f\circ g^{-1} \sim_{n} \id$ and thus this is $SL1_{n}$.}
					\item{The relation that if $f, g \in G^{n - 1}$ and $\bar{f} < \bar{g}$ then:
						
						\[(\id + \mathcal{F}^{n}_{i\bar{g}})(\id + \mathcal{F}^{n}_{i\bar{f}}) \subset (\id + \mathcal{F}^{n}_{i\bar{f}})(\id + \mathcal{F}^{n}_{i\bar{g}})\]
						
						Follows easily from $CL1_{n}$. So we only need to focus on the assertion that $(\id + \mathcal{F}^{n}_{i\bar{f}})$ is a group under composition. That it is closed under inverses is $SL3b_{n}$. So we only need to consider compositions, and by $SL3a_{n}$ and $A3_{n}$:
						
						\[(\id + \mathcal{F}^{n}_{i\bar{f}})(\id + \mathcal{F}^{n}_{i\bar{f}}) \subset \id + \mathcal{F}^{n}_{i\bar{f}} + \mathcal{F}^{n}_{i\bar{f}} \subset \id + \mathcal{F}^{n}_{i\bar{f}}\]}
			\end{enumerate}}
			\item{\underline{$ST_{n} + OT_{n} \Rightarrow MDT_{n}$}\\
				
				Trivial.}
			\item{\underline{$MDT_{n} + SL2_{v, h, 1, 1} + SL2_{v, n, 1, 0}, v \leq h < n,   \Rightarrow ADT_{n}$}\\
				
				We work by induction on the amount of factors in the $MDT$. The case of having a single factor is obvious. So suppose we have proven this up to having $m - 1$ factors and we want to prove the case of $m$ factors. Let $f \in G^{n}$ have $m$ factors under $MDT_{n}$, the final factor being in $(\id + \mathcal{F}^{j}_{ig})$. By induction hypothesis we may rewrite all previous factors as $a + \sum \phi_{l}$ with $\phi_{l} \in \mathcal{F}^{k_{l}}_{ih_{l}}$.\\
				
				By $A3_{n}$:
				
				\[a(\id + \mathcal{F}^{j}_{ig}) \subset a + \mathcal{F}^{j}_{ig}\]
				
				And for each $l$, by $SL2_{j, k_{l}}$, by the ordering inherent in $MDT_{n}$ this makes sense, we have:
				
				\[\phi(\id + \mathcal{F}^{j}_{ig}) \subset \phi + \mathcal{F}^{j}_{ig}\]
				
				So by $A3_{n}$:
				
				\[f \in a + \sum \phi_{l} + \mathcal{F}^{j}_{ig}\]
				
				which wraps up the induction.}
		\end{enumerate}
		
		\underline{\textbf{Base of induction:}}\\
		
		Now all we are missing is the base of induction, i.e. the case $n = 0$, let us go over how all these go:\\
		
		First the ones that go exactly as in the inductive case: $SL1, SL2$ (which is now only $SL2_{0, 0, i, j}$ so does not need $ADT$), $CLR$ (same remark as $SL2$), $SL3a$, $SL3b$, $CL1$, $CL3$, $ST$ (assuming $SL4a$), $MDT$ (assuming $SL4a$), $ADT$ (assuming $SL4a$).\\ 
		
		It is worth noting that $CL2_{0}$ will never be needed because in Shift Theorem if we arrive at $G^{0}$, we know that this is just $\mathcal{R}$ and then we use the fact that $\mathcal{R}H^{0}_{0} \subset \Aff H^{0}_{0}$. Because after this we give a proof of $SL4a_{0}$ without using $SL4b_{0}$ we also do not need $SL4b_{0}$.\\
		
		Then there are only two left:
		
		\begin{enumerate}
			\item{$\underline{SL4a_{0}}$\\
				
				This is the same as the induction, the problem is just that we have not explicitly defined $J^{-1}$. Literally written it says that:
				
				\[J^{0} = \Ad(G^{0})\mathcal{NC} = \Ad(\mathcal{R})\mathcal{NC} \subset H^{0}_{1}\]
				
				and $\mathcal{R} \subset \langle \Aff, \mathcal{R}^{0}\rangle$, so using axiom $A2b_{0}$ it suffices to consider elements of the form:
				
				\[\Ad(g)\left(\id + \mathcal{FC}^{0}_{1} \circ h\right)\]
				
				where $h \in \Aff$ and either $g \in \Aff$ or $g \in \mathcal{R}^{0}$, the case where $g \in \Aff$ is just $CLR$. So we only need to consider the case where $g \in \mathcal{R}^{0}$. But by axiom $A2a_{0}$, $\mathcal{R}^{0} \subset \id + \mathcal{FC}_{0}^{0}$, so by axiom $A8_{0}$ we are done.}
			\item{\underline{$OT_{0}$}\\
				
				Statements $2$ and $3$ are exactly the same as in the induction. Statement $1$ follows in fact from admissibility, indeed suppose that $f, g \in \mathcal{R}$ are not ordered, i.e. there is an infinitely increasing sequence of $x_{i}$ such that:
				
				\[f(x_{i}) = g(x_{i})\]
				
				then there is an infinitely increasing sequence $y_{i} \coloneqq g(x_{i})$ (remember the linear part in $\Aff$ is always positive) such that:
				
				\[f(g^{-1}(y_{i})) = y_{i}\]
				
				so we can look at $f \circ g^{-1}$ which is equal to the identity at an infinity of points, so by admissibility we have $f \circ g^{-1} = \id$.}
		\end{enumerate}
		
		\underline{\textbf{Supplement:}}\\
		
		Everything is proven almost literally, point 1 of the supplement follows from the nature of the induction and point 2 of the supplement is done in point 14 (a) iii of the induction step (note that by taking inverses the upper bounds give a lower bound).
	\end{proof}
	
	\section{Domains for the convergent case}
	
	\begin{Remark}
		All arguments in this section are really modulo any compact sets. We are interested in behaviour near infinity of domains. For those more formally inclined, it is possible to put on all subsets of $\mathbb{C}$ an equivalence relation by saying that domains $A$ and $B$ are equivalent if their symmetric difference, i.e. $(A\setminus B) \cup (B \setminus A)$ is precompact (i.e. it is bounded).\\
		
		If we say some statement is true for a domain $A$ what we formally mean is that there exists some domain $B$ equivalent to $A$ for which this statement holds. The reason this works is in some sense that Phraghm\`en-Lindel\"of does not care about compact sets.\\
		
		We will also assume that any domain we speak of has points with arbitrarily large positive real part.
	\end{Remark}
	
	\subsection{Arguments for the convergent case}
	
	It is clear that in order to prove Theorem \ref{ThmConvPoly} all we need to do is to find such sets of $\mathcal{FC}_{i}^{n}$. Here we have a few remarks:
	
	\begin{enumerate}
		\item{If we have multiple (finite of infinite) sets of possible functional cochains, the intersection of all of them is still a valid candidate, in other words there exists a minimal set of functional cochains.}
		\item{Suppose there exists a valid set of functional cochains. Suppose given a set of functions which are nearly functional cochains in the sense that they satisfy axioms $A2-A8$, but not $A1$, then by intersecting with a valid set of functional cochains we get a subset which does satisfy all axioms for functional cochains.}
	\end{enumerate}
	
	Combining these two remarks we get the following: (Given an admissible choice of $(\mathcal{R}, \mathcal{NC})$) Suppose there exist a valid set of functional cochains, then the smallest set of functions satisfying axioms $A2-A8$, must also satisfy axiom $A1$.\\
	
	And it is actually easy to construct the smallest set satisfying axioms $A2-A8$.
	
	\begin{Definition}
		Let us define on induction on $k$ and $n$ $FC^{n, k, 0}$ and $FC^{n, k, 1}$, the case $k = 0$ will be treated separately but not the case $n = 0$, i.e. we will define $FC^{n, 0, i}$ explicitly but for $FC^{0, k, i}$ we expect the reader to fill in $n = 0$ in the inductive definition below.\\
		
		We define for $i = 0, 1$:
		
		\[FC^{n}_{i} = \bigcup_{k = 0}^{\infty}FC^{n, k, 1}_{i}\]
		
		\[FC^{n, 0, 0}_{0} = (A^{n}\mathcal{R}^{0} - \id)\circ \ln^{n}\]
		
		\[FC^{n, 0, 0}_{1} = (A^{n}\mathcal{NC} - \id)\circ \ln^{n}\]
		
		Let $\Vec(.)$ denote the vector space generated by given elements. Suppose that $FC^{n, k, 0}_{i}$ is defined for $i = 0, 1$. Then we define:
		
		\[FC^{n, k, 1}_{0} = \Vec(FC^{n, k, 0}_{0})\]
		
		\[FC^{n, k, 1}_{1} = \Vec(FC^{n, k, 0}_{0}, FC^{n, k, 0}_{1})\]
		
		Suppose that $FC^{n, k, 1}_{i}$ is defined for $i = 0, 1$. Then $FC^{n, k + 1, 0}_{i}$ is defined by adding the items indicated by the axioms A4-A8, e.g. for A4b you would add:
		
		\[FC^{n, k, 1}_{i}\circ\left(\id + \left(FC^{n, k, 1}_{i}\circ A^{-n}(G^{n - 1}_{\preceq \id})\right)\right)\]
	\end{Definition}
	
	Then the sets $FC^{n}_{i}$ are clearly the smallest sets of functions satisfying axioms $A2 - A8$. All that is left is proving that the sets $FC^{n}_{i}$ also satisfy axiom $A1$.\\
	
	\begin{Remark}
		When we talk about proving something by structural induction, or by induction on the axioms we really mean by induction on the $k$ in the above construction.
	\end{Remark}
	
	In particular let us treat convergent polycycles. In the interest of the reader's time we will simply use the full power of \cite{Kaiser2017AnalyticCO} which in and of itself contains enough to prove Theorem \ref{ThmConvPoly}, without using Theorem \ref{ThmSuffAxFull}.
	
	\begin{Remark}
		Note that \cite{Kaiser2017AnalyticCO} may have enough to prove Theorem \ref{ThmConvPoly}, it does not have enough to immediately prove Proposition \ref{PropQAEst}. For which we do Theorem \ref{ThmSuffAxFull} and domain extensions in order to prove the upper and lower bound of $A1$.
	\end{Remark}
	
	\begin{Proposition}
		Take $(\mathcal{R}, \mathcal{NC})$ given by taking $\mathcal{R}$ to be the set of convergent near affine small Dulac series, i.e. any series:
		
		\[a + \sum_{i}P_{i}(\zeta)e^{-\lambda_{i}\zeta}\]
		
		with $a \in \Aff$, $P_{i}$ polynomials and $\lambda_{i} > 0$ and increasing to $+\infty$ which converges pointwise for $\re(\zeta) > C$ for some $C > 0$. Take $\mathcal{NC} = \{\id\}$.\\
		
		Then:
		
		\begin{enumerate}
			\item{All convergent polycycles are $(\mathcal{R}, \mathcal{NC})$-admissible.}
			\item{$FC^{n, k, i}_{0} = FC^{n, k, i}_{1}, k > 0$.}
			\item{For all elements of $FC^{n}_{i}$ holds that they either obey the lower bound of A1 or they are smaller than any exponential.}
		\end{enumerate}
	\end{Proposition}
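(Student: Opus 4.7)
The plan is to address the three statements in turn, with the bulk of the work concentrated on the third.

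For part~1, I would verify admissibility case by case using the explicit normal forms given in Definition~\ref{DefConvSad}. A convergent hyperbolic saddle is real-analytically conjugate to a resonant or quasi-resonant normal form whose Dulac map, after the substitution $\zeta = -\ln z$, is an affine map plus a pointwise convergent near-affine Dulac series of exactly the prescribed form; by construction this is in $\mathcal{R}$. For a convergent semihyperbolic saddle, I would integrate the flow of $\frac{x^{k+1}}{1+ax^{k}}\frac{\partial}{\partial x} - y\frac{\partial}{\partial y}$ between real-analytic transversals and read off that in the logarithmic chart the transit map has the shape (element of $\mathcal{R}^0$) $\circ \exp \circ$ (element of $\mathcal{R}^0$) $\circ$ (affine), which with $\mathcal{NC} = \{\id\}$ lands in $\TO_{(\mathcal{R},\mathcal{NC})}$.

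For part~2, the key observation is that $\mathcal{NC} = \{\id\}$ gives $(A^{n}\mathcal{NC} - \id)\circ \ln^{n} = \{0\}$, so $FC^{n,0,0}_{1} = \{0\}$ and hence $FC^{n,0,1}_{1} = FC^{n,0,1}_{0}$. A straightforward induction on $k$ then propagates the equality: the constructions of axioms A4--A8 that build $FC^{n,k+1,0}_{i}$ only read from $FC^{n,k,1}_{i}$, so if those agree at level $k$ they agree at level $k+1$, and the vector-space step preserves the coincidence.

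For part~3, I would run a structural induction on the construction of $FC^n_i$ (and on $n$), carrying the dichotomy stated in the claim: each $\alpha$ either admits a lower bound $|\alpha(x)| > e^{\lambda x}$ for some $\lambda < 0$, or is smaller in absolute value than every $e^{-Mx}$. At the base level, a non-zero element of $(A^{n}\mathcal{R}^{0} - \id)\circ \ln^{n}$ is built from a convergent Dulac series whose leading term $P_{1}(\zeta)e^{-\lambda_{1}\zeta}$ dictates a two-sided exponential estimate on the real axis; the $n$-fold conjugation by $\exp$ transfers this to the required form. The inductive step then has to show that the dichotomy survives each of the closures in A3--A8.

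The main obstacle is precisely that a linear combination (A3), a compositional inverse (A6), or a multiplicative pull-back by something in $\exp^{k}\circ G^{n-1}\circ \ln^{n}$ (A7) may in principle collapse the leading term and move $\alpha$ from ``exponentially bounded below'' into ``smaller than any exponential'' without any analytic justification. To rule out a spurious flat remainder I would use the convergence hypothesis to extend each element in sight holomorphically to a standard quadratic domain and then invoke a Phragm\'en--Lindel\"of argument on that domain (as alluded to in the remark following $ST_{n}$) to convert real-axis information into a genuine domain-wide exponential estimate. This is exactly the point at which the convergent framework of \cite{Kaiser2017AnalyticCO} enters: it supplies both the quadratic-domain extensions and the quasi-analyticity-style bounds needed to propagate the dichotomy through A6 and A7, after which A3, A4, A5 and A8 reduce to essentially bookkeeping of exponents. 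Once part~3 is established, the minimality remark preceding the proposition automatically upgrades the dichotomy to axiom $A1$.
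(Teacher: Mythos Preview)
Your treatment of parts 1 and 2 matches the paper's: the explicit Dulac map computation for the semihyperbolic normal form and the observation that $\mathcal{NC}=\{\id\}$ forces $FC^{n,0,0}_{1}=\{0\}$, whence the two towers coincide by construction.

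For part 3, however, your route diverges from the paper's and carries a real gap. The paper does \emph{not} run a structural induction through $A3$--$A8$; it simply observes that every function produced in the construction of $FC^{n}_{i}$ is definable in the o-minimal structure $\mathbb{R}_{an,\exp}$ (this is what the reference to \cite{Kaiser2017AnalyticCO} is doing). Germs at $+\infty$ of such functions form a Hardy field containing all $e^{-\lambda x}$, so any $\alpha$ is automatically comparable to every $e^{-\lambda x}$: either $|\alpha|>e^{-\lambda x}$ eventually for some $\lambda$, or $|\alpha|<e^{-\lambda x}$ eventually for all $\lambda$. That is the entire argument.

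Your proposed mechanism, Phragm\'en--Lindel\"of on standard (quadratic) domains, does not yield this dichotomy. PL tells you that a bounded holomorphic function which is smaller than every exponential on the real axis must vanish; it does \emph{not} tell you that a non-vanishing function must be eventually bounded below by a fixed exponential. The obstruction you correctly identify under $A3$ --- cancellation of leading terms --- is not an issue of a ``spurious flat remainder'' (which PL would kill) but of possible oscillation between exponential scales, and PL is silent about that. In the paper, PL is invoked only \emph{after} the present proposition, in the Corollary following Proposition~\ref{PropositionUpperBoundA1}, to upgrade the second branch of the dichotomy to ``identically zero''; it plays no role in establishing the dichotomy itself. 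So for part 3 you should replace the inductive PL argument by the one-line Hardy-field/o-minimality observation.
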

	
	\begin{proof}
		It is not difficult to check that (see \cite[p. 19]{ilyashenkoFiniteness}) the Dulac map for the formal normal form:
		
		\[\frac{x^{k + 1}}{1 + a x^{k}}\frac{\partial}{\partial x} - y\frac{\partial}{\partial y}\]
		
		is given by:
		
		\[\Delta_{st}(z) = e^{\frac{1}{k}}e^{- \frac{1 - akz^{k}\ln(z)}{kz^{k}}} = e^{- \frac{1 - z^{k} - akz^{k}\ln(z)}{kz^{k}}}\]
		
		it is straightforward to calculate (see also \cite[p. 36]{ilyashenkoFiniteness}) that if:
		
		\[a(\zeta) = k\zeta - \ln(k)\]
		
		\[f(\zeta) = \zeta+\ln\left(1 - \frac{1 - a\left(\zeta + \ln(k)\right)}{k} e^{-\zeta}\right)\]
		
		that then:
		
		\[\Delta_{st}^{\log} = \exp \circ f \circ a \in \exp \circ \mathcal{R}^{0} \circ \Aff\]
		
		So any convergent polycycle is $(\mathcal{R}, \mathcal{NC})$-admissible for this choice (it is easy to check that Taylor series with zero constant term written in logarithmic coordinate gives something in $\mathcal{R}$ with all the $P_{i}$ even being constants).\\
		
		Another thing worth noting is that $\mathcal{R}$ is closed under composition because we can simply go back to the natural coordinate from the logarithmic coordinate and then we are in near identity $\mathbb{R}_{an, \exp}$-germs around the origin expandable in terms of (real powers of) $\ln(z)$ and $z$. To compose two of them you simply need to Taylor expand out infinitesimal term in the logarithm. Convergence is roughly implied by composition being well defined in \cite{Kaiser2017AnalyticCO}, so convergence in a radius gives the required domain back in the logarithmic chart.\\
		
		Then the second statement is by construction.\\
		
		The third statement comes from the observation that every element of $FC^{n}_{i}$ is in fact an element of $\mathbb{R}_{an, \exp}$ as in \cite{Kaiser2017AnalyticCO} thus it is in a large Hardy field together with all $e^{-\lambda x}$, so it is either larger than one of them of smaller than all of them.
	\end{proof}
	
	\begin{Remark}
		By point 2 of the above Proposition we will drop the subscripts in $FC^{n}_{i}$, $F^{n}_{ig}$ (defined the same as $\mathcal{F}^{n}_{ig}$ relative to $\mathcal{FC}^{n}_{i}$) and $H^{n}_{i}$.\\
		
		Note also that in this case $J^{n} = \{\id\}$ for all $n$. This essentially makes axiom A4c trivial in this case. So when we go over the axioms we will skip A4c.
	\end{Remark}
	
	The rest of the section will be dedicated to doing two things, proving the upper bound of $A1$ and proving that any element smaller than any exponential is in fact zero. For that we will use the following:
	
	\begin{Theorem}[{\cite[Lemma 24.37 (2)]{Ilyashenko08lectureson}}]\label{ThmPL2}
		Let $\mathbb{C}^{+}$ be the real positive half plane inside $\mathbb{C}$ (without the imaginary axis). Let $f$ be an analytic function on $\mathbb{C}^{+}$, continuous up to and bounded on the imaginary axis.\\
		
		Suppose $f$ is smaller than any exponential on the real axis, then it is identically zero.
	\end{Theorem}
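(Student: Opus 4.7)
The plan is to apply Phragm\'en--Lindel\"of twice: first to transfer the bound on the imaginary axis to the whole right half plane, and then to upgrade the faster-than-any-exponential decay along the positive real axis into decay throughout $\mathbb{C}^+$. The gauge doing the second step is the standard choice
\[
g_{\lambda}(z) := f(z)\, e^{\lambda z}, \qquad \lambda > 0,
\]
analyzed separately on the quadrants $Q_{\pm} = \{z : \re z > 0,\ \pm \im z > 0\}$.

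Let $M$ bound $|f|$ on the imaginary axis. In the intended setting of this lemma $f$ has at most order one growth on $\mathbb{C}^+$ (in our application it is built from elements of an $\mathbb{R}_{an,\exp}$-Hardy field), so a first Phragm\'en--Lindel\"of argument on the half plane already yields $|f(z)| \leq M$ throughout $\mathbb{C}^+$. On the boundary of $Q_+$ one then has $|g_\lambda(iy)| = |f(iy)| \leq M$ along the positive imaginary axis, while on the positive real axis the hypothesis gives, for any $\mu > \lambda$, $|g_\lambda(x)| \leq C_\mu e^{(\lambda - \mu)x} \to 0$. Since $Q_+$ is a sector of opening $\pi/2$, whose Phragm\'en--Lindel\"of critical exponent is $2$, the order one growth of $g_\lambda$ inherited from $f$ is subcritical, so Phragm\'en--Lindel\"of on $Q_+$ gives $|g_\lambda| \leq M$ on all of $Q_+$; the same argument applied in $Q_-$ yields the analogous bound. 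Combining,
\[
|f(z)| \leq M\, e^{-\lambda \re z} \qquad \text{on } \mathbb{C}^+,
\]
and since $\lambda > 0$ was arbitrary, letting $\lambda \to \infty$ at any fixed $z$ with $\re z > 0$ forces $f(z) = 0$.

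The two Phragm\'en--Lindel\"of steps are entirely routine once the setup is in place, so the real obstacle is securing the a priori subexponential growth estimate on $f$ over $\mathbb{C}^+$. The statement as written controls $f$ only on the imaginary axis, and one must extract the required half-plane growth from the structure of the ambient analytic class; in the application to convergent polycycles this comes from the fact that $f$ will be a difference of elements of $\mathcal{FC}^n_i$, each of which lifts via the logarithmic chart to a function with analytic extension into a sufficiently large quadratic-type domain. That domain-extension bookkeeping is exactly what the preceding sections of the paper are engineered to provide, and once it is in hand the twin Phragm\'en--Lindel\"of applications above close the loop.
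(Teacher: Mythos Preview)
The paper does not give its own proof of this theorem; it is quoted from \cite{Ilyashenko08lectureson} and used as a black box (cf.\ the Remark immediately after the definition of standard domains, where it is invoked for \emph{bounded} analytic functions). Your core argument---multiply by $e^{\lambda z}$ and apply Phragm\'en--Lindel\"of on each quarter-plane $Q_{\pm}$, where order~$1$ is subcritical---is exactly the standard proof and is correct once $f$ is known to be bounded on all of $\mathbb{C}^{+}$.

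The gap is in how you handle that boundedness. You read the hypothesis as boundedness only on the imaginary axis and then try to upgrade it via a ``first Phragm\'en--Lindel\"of argument on the half plane'' under an assumed order-one growth. But order~$1$ is precisely the critical exponent for a sector of opening~$\pi$, so Phragm\'en--Lindel\"of does not apply there without a further type condition you have not supplied. In fact the statement, read literally as you read it, is false: take $f(z)=e^{-e^{z}}$. Then $|f(iy)|=e^{-\cos y}\in[e^{-1},e]$ is bounded on the imaginary axis, $f(x)=e^{-e^{x}}$ decays faster than any exponential on the real axis, yet $|f(x+i\pi)|=e^{e^{x}}$ blows up, so $f\not\equiv 0$. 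The intended hypothesis---and the one in the cited source, and the one the paper actually uses---is that $f$ is bounded on all of $\mathbb{C}^{+}$. With that assumption your first step is unnecessary and the quadrant argument closes directly; your closing paragraph about extracting growth control from the ambient class is then beside the point for this lemma (that control is what Proposition~\ref{PropositionUpperBoundA1} establishes separately before the lemma is invoked).
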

	
	In order to use this we will need to know much more about the domains of $FC^{n, k}$.
	
	\subsection{Controllable functions}
	
	So we have now made a minimal set which allows for proving finiteness as in the previous section. There are two ways in which this minimality makes the functions significantly easier to handle, the first of which is what we will call controllability, this was used in \cite{ilyashenkoFiniteness}, but never given a name.
	
	\begin{Definition}
		Let $U \subset \mathbb{C}$ be open, let $h$ be an analytic function on $U$, we call $h$ controllable on $U$ if for all $\epsilon > 0$ and for all $z \in U$ with $\re(z)$ large enough (dependent on $\epsilon$):
		
		\[|h(z)| < e^{\re(z)^{\epsilon}} \quad, \quad \left|\frac{dh(z)}{dz}\right| < e^{\re(z)^{\epsilon}}\]
	\end{Definition}
	
	The reason we look at this notion is because it is particularly compatible with the action of $A^{-1}$. But before we get there we first need an auxiliary definition:
	
	\begin{Definition}\label{DefPolyBound}
		Let $U \subset \mathbb{C}^{+}$ we call $U$ polynomially bounded if there exists a $d$ real such that for all $z \in U$, $|\re(z)|$ large enough:
		
		\[|\im(z)| \leq |\re(z)|^{d}\]
	\end{Definition}
	
	We now get the following:
	
	\begin{Proposition}\label{Prop Off real est}
		Let $h$ be controllable on $U$, then $A^{-1}(h)$ is controllable on any polynomially bounded subset of $\exp(U)$, moreover for all $\epsilon > 0$ we have for $\re(z)$ large enough:
		
		\[\left|\frac{d\ln(A^{-1}(h))}{dz}(z)\right| \leq \frac{1}{\re(z)^{1 - \epsilon}}\]
	\end{Proposition}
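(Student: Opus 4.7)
The plan is to unwind the definition $A^{-1}(h)(z) = \exp(h(\ln z))$ and reduce everything to pointwise estimates for $h$ and $h'$ at the point $w = \ln z$, which lies in $U$ (up to the compact-set convention) because we are looking at a subset of $\exp(U)$. Observe that $\ln(A^{-1}(h))(z) = h(\ln z)$, so
\[
\frac{d}{dz}\ln(A^{-1}(h))(z) = \frac{h'(\ln z)}{z}, \qquad \frac{d}{dz} A^{-1}(h)(z) = A^{-1}(h)(z)\cdot\frac{h'(\ln z)}{z},
\]
and both desired bounds will come from estimating the three factors separately.

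First I would use polynomial boundedness of the subset $V \subset \exp(U)$ to control $\re(\ln z) = \ln|z|$ in terms of $\re(z)$. From $|\im z| \leq |\re z|^{d}$ (Definition \ref{DefPolyBound}) it follows that $|z| \leq C (\re z)^{d}$ for $\re z$ large, hence $\ln|z| \leq d \ln(\re z) + O(1)$; in particular $\re(\ln z) \to \infty$ as $\re z \to \infty$ and so controllability of $h$ applies at $w = \ln z$.

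Next, given any target exponent $\epsilon' > 0$, the controllability of $h$ gives $|h(\ln z)|, |h'(\ln z)| \leq e^{(\ln|z|)^{\epsilon}}$ for any $\epsilon > 0$. Since $(\ln x)^{\epsilon} = o(\ln x)$ as $x \to \infty$ whenever $\epsilon < 1$, choosing $\epsilon$ sufficiently small we get $e^{(\ln|z|)^{\epsilon}} \leq (\re z)^{\epsilon'}$ for $\re z$ large. This simultaneously yields
\[
|A^{-1}(h)(z)| = e^{\re h(\ln z)} \leq e^{|h(\ln z)|} \leq e^{(\re z)^{\epsilon'}}
\]
(which is better than the required $e^{(\re z)^{\epsilon'}}$ bound on $A^{-1}(h)$) and, using $|z|\geq \re z$,
\[
\left|\frac{d\ln(A^{-1}(h))}{dz}(z)\right| = \frac{|h'(\ln z)|}{|z|} \leq \frac{(\re z)^{\epsilon'}}{\re z} = \frac{1}{(\re z)^{1-\epsilon'}}.
\]
The derivative bound for $A^{-1}(h)$ itself then follows by multiplying the two estimates above.

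I do not expect any deep obstacle: the proof is essentially the observation that $\exp$ and $\ln$ slow down polynomial growth drastically, so controllability (which only forbids sub-double-exponential behaviour) is preserved under $A^{-1}$ as long as the horizontal strips get turned into polynomially bounded sectors rather than arbitrary domains. The one place that needs a moment of care is the branch of $\ln$ used to go from $V \subset \exp(U)$ back into $U$, but this is handled by the convention at the beginning of the section that all statements about domains are modulo compact sets.
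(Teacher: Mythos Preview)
Your proposal is correct and follows essentially the same route as the paper: both arguments write $A^{-1}(h)(z)=e^{h(\ln z)}$, use polynomial boundedness to get $|z|\le C(\re z)^{d}$ and hence $\re(\ln z)=\ln|z|\le d\ln(\re z)+O(1)$, apply controllability of $h$ at $\ln z$, and then use the elementary fact $x^{\epsilon}=o(x)$ (equivalently $(\ln x)^{\epsilon}=o(\ln x)$) to absorb the resulting $e^{(\ln|z|)^{\epsilon}}$ into a power of $\re z$. The paper presents the same chain of inequalities slightly more explicitly, but there is no substantive difference.
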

	
	\begin{proof}
		Let $V$ be some polynomially bounded subset of $\exp(U)$, note then that also for some $d > 0$, we have for all $z \in V$:
		
		\[|z| \leq |\re(z)|^{d}\]
		
		Then $z \in V$, for $\epsilon > 0$ and $\re(z)$ large enough:
		
		\begin{align*}
			|A^{-1}(h(z))| & = |e^{h(\ln(z))}| \leq e^{|h(\ln(z))|} \leq e^{e^{\re(\ln(z))^{d\epsilon}}} \leq e^{e^{(\ln|z|)^{d\epsilon}}}\\
			& \overset{*}{\leq} e^{e^{d\epsilon\ln|z|}} \leq e^{|z|^{d\epsilon}} \leq e^{|\re(z)|^{d\epsilon}}\\
		\end{align*}
		
		At $*$ we use the fact that for $x$ real and large enough and $\epsilon$ small enough, $x^{\epsilon} \leq \epsilon x$. Then because $\epsilon$ is arbitrarily small, we are done. Next:
		
		\begin{align*}
			\left|\frac{dA^{-1}(h)}{dz}(z)\right| & = |A^{-1}(h(z))|\left|\frac{dh}{dz}(\ln(z))\right|\frac{1}{|z|}\\&\\
			& \leq\frac{e^{\re(z)^{d\epsilon}}e^{\re(\ln(z))^{d\epsilon}}}{|z|} \leq \frac{e^{\re(z)^{d\epsilon}}|z|^{d\epsilon}}{|z|} \leq e^{\re(z)^{d\epsilon}}
		\end{align*}
		
		Retaking the estimates above we have for $\re(z)$ large enough and $\epsilon < 1$:
		
		\[\left|\frac{d\ln\left(A^{-1}(h)\right)}{dz}(z)\right| = \left|\frac{dh(\ln(z))}{dz}\right| \leq \frac{1}{|z|^{1 - d\epsilon}} \leq \frac{1}{\re(z)^{1 - d\epsilon}}\]
		
		Note that $A^{-1}(h)$ is the exponential of something and thus non-zero, so we are done.
	\end{proof}
	
	Controllable functions are as the name might suggest in some sense particularly nice, in particular they are compatible with what we will call domains of almost degree $d$.
	
	\begin{Definition}
		A positive function $f: \mathbb{R} \to \mathbb{R}$ is called almost degree $d \in \mathbb{R}$ if for all $\epsilon$ and $x$ large enough, dependent on $\epsilon$:
		
		\[f(x) \geq x^{d - \epsilon}\]
		
		A $U \subset \mathbb{C}^{+}$ is almost degree $d$ if there exists some almost degree $d$ function $f$ such that for some $a > 0$:
		
		\[\{z \in \mathbb{C}\mid \re(z) > a, -f(\re(z)) < \im(z) < f(\re(z))\} \subset U\]
		
		Because we will need it more often an almost degree $1$ domain will be called an almost sector and an almost degree $0$ domain will be called an almost strip.
	\end{Definition}
	
	\begin{Remark}
		Being almost degree $d$ has no upper bound in how large either the function or the domain can be. However we are of course mostly interested in the case where some function is 'just barely' almost degree $d$, i.e. not even degree $d$, think of something like $f(x) = \frac{x}{\ln(x)}$ which is almost degree $1$.
	\end{Remark}
	
	The reason we introduce these two concepts together is that they are compatible in the following sense:
	
	\begin{Proposition}\label{PropControlFuncBound}
		Let $g = A^{-1}(h)$ with $h$ controllable on some domain $\ln(U)$, suppose that for each $z$ in $U$ the straight line from $z$ perpendicular to the real axis is in $U$ (the part between $z$ and the real axis).\\
		
		Let $d \in \mathbb{R}$, then for each $\varepsilon > 0$ there exists:
		
		\begin{enumerate}
			\item{Functions $f_{1}$ and $f_{2}$ of almost degree $1$ such that for:
				
				\[V \coloneqq \left\{z \in \mathbb{C} \mid |\im(z)| \leq \min\left(\frac{f_{1}(\re(z))\re(z)^{d}}{g(\re(z))}, f_{2}(\re(z))\right)\right\}\]
				
				we have for $z \in V$:
				
				\[|\im(g(z))| < \varepsilon \re(z)^{d}\]}
			\item{An almost degree $1 + d$ domain $V$ such that for $z \in V$:
				
				\[|\arg(g)(z)| < \varepsilon \re(z)^{d}\]}
		\end{enumerate}
	\end{Proposition}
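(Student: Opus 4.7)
The plan is to reduce everything to the derivative estimate from Proposition \ref{Prop Off real est}, namely that on polynomially bounded subsets of $U$ (which contain the domain we will work on, by a judicious choice of $f_{1}, f_{2}$) one has $|(\ln g)'(z)| \leq 1/\re(z)^{1 - \epsilon}$ for every $\epsilon > 0$ and $\re(z)$ large. Note that because $h$ is the $A$-conjugate of an element of $\mathcal{R}$ (ultimately coming from real-analytic data), its restriction to the real axis is real, so $g$ is positive and real on the real axis. This lets us compare $g(z)$ to the real value $g(\re(z))$ by integrating along the vertical segment from $\re(z)$ to $z$, which by the standing assumption on $U$ lies inside $U$ as long as our $V$ does.

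The core calculation is then the following. For any $z$ in our eventual domain, write $\xi = \re(z)$ and integrate along the vertical segment $[\xi, z]$, on which $\re$ is constantly $\xi$:
\[
\ln g(z) - \ln g(\xi) = \int_{\xi}^{z} (\ln g)'(w)\, dw, \qquad |\ln g(z) - \ln g(\xi)| \leq \frac{|\im(z)|}{\xi^{1 - \epsilon}}.
\]
Since $g(\xi) > 0$, we have $\arg g(z) = \im\bigl(\ln g(z) - \ln g(\xi)\bigr)$, so $|\arg g(z)| \leq |\im(z)|/\xi^{1 - \epsilon}$. For the imaginary part, setting $E = \ln g(z) - \ln g(\xi)$ gives $g(z) = g(\xi) e^{E}$, hence $|\im g(z)| = g(\xi)\,|\im(e^{E})|$, and for $|E| \leq 1$ one has $|\im(e^{E})| \leq |e^{E} - 1| \leq 2|E|$, so $|\im g(z)| \leq 2 g(\xi)|\im(z)|/\xi^{1 - \epsilon}$.

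To close (2), demanding $|\im(z)|/\xi^{1 - \epsilon} < \varepsilon\,\xi^{d}$ produces the bound $|\im(z)| < \varepsilon\, \xi^{d + 1 - \epsilon}$; choosing $\epsilon < 1$ small, the function $x \mapsto \varepsilon\, x^{d + 1 - \epsilon}$ is almost degree $1 + d$, and any such domain sits inside a polynomially bounded subset of $U$ once we restrict to $\re(z)$ large. To close (1), demanding $2 g(\xi)|\im(z)|/\xi^{1 - \epsilon} < \varepsilon\, \xi^{d}$ gives $|\im(z)| < \varepsilon\, \xi^{d + 1 - \epsilon}/(2 g(\xi))$, so take $f_{1}(x) = \tfrac{\varepsilon}{2}\, x^{1 - \epsilon}$ (almost degree $1$), while imposing $f_{2}(x) = x^{1 - \epsilon}$ (also almost degree $1$) simultaneously enforces $|E| \leq 1$ and keeps $V$ inside a polynomially bounded subset of $U$ where Proposition \ref{Prop Off real est} applies; the minimum of the two bounds on $|\im(z)|$ then gives the claimed $V$.

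The only mildly delicate point is bookkeeping: one must choose the $\epsilon$ in the derivative estimate strictly smaller than the exponent deficits appearing in the definitions of $f_{1}, f_{2}$ and $V$, and shrink everything near $\re(z) \to \infty$ so that (a) vertical segments from $V$ to the real axis remain inside $U$, (b) $V$ is contained in a polynomially bounded subdomain of $U$ so the derivative estimate is available, and (c) the bound $|E| \leq 1$ holds throughout, validating the Taylor estimate $|e^{E} - 1| \leq 2|E|$. None of this is substantive, but keeping the three constraints compatible simultaneously is the only thing that could trip one up.
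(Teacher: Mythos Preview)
Your core analytic estimate is exactly the one in the paper: integrate $(\ln g)'$ along the vertical segment to bound $|\ln g(z)-\ln g(\xi)|$ by $|\im(z)|/\xi^{1-\epsilon}$, then split into modulus and argument. That part is fine, and your use of $|e^{E}-1|\le 2|E|$ in place of the paper's $\sin$-based bound is a harmless simplification.

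The gap is in the final step where you produce the domains. You assert that for a \emph{fixed} $\epsilon>0$ the function $x\mapsto \varepsilon\,x^{d+1-\epsilon}$ is almost degree $1+d$, and that $f_{1}(x)=\tfrac{\varepsilon}{2}x^{1-\epsilon}$, $f_{2}(x)=x^{1-\epsilon}$ are almost degree $1$. By the definition in the paper, ``almost degree $d$'' requires $f(x)\ge x^{d-\epsilon'}$ for \emph{every} $\epsilon'>0$ (for $x$ large depending on $\epsilon'$). A single power $x^{d-\epsilon}$ with $\epsilon>0$ fixed fails this for $\epsilon'<\epsilon$; it is only almost degree $d-\epsilon$, not almost degree $d$. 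So as written you have produced, in (2), an almost degree $1+d-\epsilon$ domain, and in (1), functions $f_{1},f_{2}$ of almost degree $1-\epsilon$, which is strictly weaker than the statement.

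The missing idea is that the derivative bound from Proposition~\ref{Prop Off real est} holds for every $\epsilon>0$ but only from some threshold $x_{\epsilon}$ onward, and one must let $\epsilon$ drift to $0$ as $x\to\infty$. The paper does this by a patching construction: choose an increasing sequence $x_{n}$ so that the estimate with $\epsilon=\tfrac{1}{n}$ is valid on $[x_{n},\infty)$, and define $f_{1}=f_{2}$ to equal $x^{1-2/n}$ on $[x_{n},x_{n+1}]$. This piecewise-defined function genuinely is almost degree $1$, and substituting it back into your inequalities gives the required conclusions. Your argument becomes correct once you insert this step; without it the ``almost degree'' claims do not hold.
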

	
	\begin{proof}
		What we know is that for any $\delta, \epsilon > 0$ and any $x$ real and large enough:
		
		\[\left|\ln\left(\frac{g(x + i\delta)}{g(x)}\right)\right| \leq \frac{\delta}{x^{1 - \epsilon}}\]
		
		This implies two different statements by considering both the real and imaginary parts of the logarithm and the fact that $g$ is a real analytic function:
		
		\[|g(x + i\delta)| \leq e^{\frac{\delta}{x^{1 - \epsilon}}}|g(x)| \quad, \quad |\arg(g(x + i\delta))| \leq \frac{\delta}{x^{1 - \epsilon}}\]
		
		combining both and the fact that clearly for $x$ large enough $\arg(x + i \delta) = \arctan\left(\frac{\delta}{x}\right)$ we get by the rudimentary estimate that $\sin(\arctan(x)) < x$:
		
		\[|\im(g(x + i\delta))| \leq \frac{\delta}{x^{1 - \epsilon}}e^{\frac{\delta}{x^{1 - \epsilon}}}|g(x)|\]
		
		Now the first statement is implied by this final inequality, indeed, taking $\delta = \min\left(\frac{x^{1 + d - 2\epsilon}}{|g(x)|}, x^{1 - 2\epsilon}\right)$ will let $\im(g(x + i\delta))$ eventually grow at worst as $x^{d -\epsilon}$, but $\epsilon$ is arbitrary. Now concretely, let $x_{n}$ be large enough for this inequality to hold for $\epsilon = \frac{1}{n}$ and such that: 
		
		\[\frac{\min\left(\frac{x^{1 + d - \frac{2}{n}}}{|g(x)|}, x^{1 - \frac{2}{n}}\right)}{x^{1 - \frac{1}{n}}}e^{\frac{\min\left(\frac{x^{1 + d - \frac{2}{n}}}{|g(x)|}, x^{1 - \frac{2}{n}}\right)}{x^{1 - \frac{1}{n}}}}|g(x)| \leq \varepsilon x^{d}\]
		
		Assume that $x_{1} < x_{2} < \cdots$. Then we can make the function $f_{1} = f_{2}$ which is $x^{1 - \frac{2}{n}}$ between $x_{n}$ and $x_{n + 1}$, this is certainly almost degree $1$ and we get the inequality we want in a domain as described.\\
		
		For the final statement we retake an earlier formula:
		
		\[|\arg(g(x + i\delta))| \leq \frac{\delta}{x^{1 - \epsilon}}\]
		
		So we can just proceed as in the previous part.
	\end{proof}
	
	\begin{Lemma}\label{LemmaExpDDom}
		Let $U$ be an almost degree $d$ domain:
		
		\begin{enumerate}
			\item{If $0 < d \leq 1$ then $\exp(U)$ contains $\mathbb{C}^{+}$ outside some compactum.}
			\item{If $d \leq 0$ then $\exp(U)$ contains a domain of almost degree $1$.}
		\end{enumerate}
	\end{Lemma}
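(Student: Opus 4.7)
The plan is to work directly from the defining inner strip in the almost-degree-$d$ condition. Let $f$ be an almost-degree-$d$ function such that $\{z\in\mathbb{C}:\re z>a,\ |\im z|<f(\re z)\}\subset U$, and handle the two parts in turn by chasing the principal-branch identity $\log\circ\exp=\id$ on narrow sets.

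For part 1, since $d>0$, fix any $0<\epsilon_{0}<d$; the hypothesis then gives $f(x)\geq x^{d-\epsilon_{0}}\to\infty$, so there is some $a'\geq a$ with $f(x)>\pi/2$ for all $x>a'$. Hence $U$ contains the genuine horizontal strip $S=\{x+iy:x>a',\ |y|<\pi/2\}$, and $\exp(S)$ is exactly $\{w\in\mathbb{C}^{+}:|w|>e^{a'}\}$, i.e.\ $\mathbb{C}^{+}$ with a bounded (hence precompact) half-disk removed. This is $\mathbb{C}^{+}$ outside a compactum in the sense used throughout the paper.

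For part 2 the goal is to exhibit an almost-degree-$1$ set $V\subset\exp(U)$. I would work on the principal branch: for any $Z=R+iI$ with $R>1$ and $|I|\leq R$, one has $\re\log Z=\tfrac{1}{2}\ln(R^{2}+I^{2})\in[\ln R,\ln R+\ln\sqrt{2}]$ and $|\im\log Z|=|\arctan(I/R)|\leq|I|/R$. Thus to guarantee $\log Z\in U$ it is enough to arrange $\ln R>a$ (automatic for $R$ large) together with $|I|/R<f(\re\log Z)$. Fix a small $\epsilon>0$; because $d\leq 0$ the function $x\mapsto x^{d-\epsilon/2}$ is nonincreasing, so for $R$ large the almost-degree-$d$ lower bound on $f$ yields $f(\re\log Z)\geq(\re\log Z)^{d-\epsilon/2}\geq C\,(\ln R)^{d-\epsilon/2}$ for a fixed $C>0$. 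Setting $g(R):=R\cdot(\ln R)^{d-\epsilon}$ and $V:=\{Z:\re Z>R_{0},\ |\im Z|<g(\re Z)\}$ for a sufficiently large $R_{0}$ then makes $\log V\subset U$, i.e.\ $V\subset\exp(U)$.

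What remains is to check that this $g$ is almost degree $1$: for every $\epsilon'>0$, the ratio $g(R)/R^{1-\epsilon'}=R^{\epsilon'}(\ln R)^{d-\epsilon}$ tends to $+\infty$ because any positive power of $R$ dominates any power of $\ln R$, so $g(R)\geq R^{1-\epsilon'}$ eventually. The only real obstacle is bookkeeping — making sure the $\epsilon$ used to lower-bound $f$ is strictly smaller than the $\epsilon$ appearing in $g$, so that the slack $(\ln R)^{\epsilon/2}\to\infty$ absorbs the constant $C$ and the rounding $\ln R\mapsto\ln R+\ln\sqrt{2}$. No deeper analytic input is needed beyond the principal-branch estimates above.
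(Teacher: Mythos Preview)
Your proof is correct and follows essentially the same elementary route as the paper. For part~1 both arguments observe that $f\to\infty$ forces $U$ to contain a genuine horizontal strip of width $>\pi$, whose exponential is $\mathbb{C}^{+}$ minus a bounded half-disk. For part~2 the paper computes $\exp$ of the boundary curve $x+ix^{d-1}$ and reads off that the image is bounded by roughly $x(\ln x)^{d-1}$, whereas you run the same computation backwards through the principal branch of $\log$; the two are inverse to one another and yield the same almost-degree-$1$ region, with your version handling the $\epsilon$-bookkeeping a bit more explicitly.
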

	
	\begin{proof}
		Let us go one by one:
		
		\begin{enumerate}
			\item{Note that if $d > 0$, any function of nearly degree $d$ has to go to infinity, thus any domain of nearly degree $d$ has to contain an abitrarily wide strip, thus the exponential of it contains $\mathbb{C}^{+}$ up to some compactum.}
			\item{If $d \leq 0$, note that any domain of nearly degree $d$ has to contain for $a$ large enough:
				
				\[\{z \in \mathbb{C} \mid \re(z) > a > 0,  -\re(z)^{d - 1} < \im(z) < \re(z)^{d - 1}\}\]
				
				We can look where the exponential of this domain goes. Because this is eventually inside a strip, we may just look at the image of the boundary, i.e.:
				
				\[e^{x + ix^{d - 1}} = e^{x}\cos(x^{d - 1}) + ie^{x}\sin(x^{d - 1})\]
				
				By the usual arguments the cosine goes to $1$ and we will approximate the graph of the function:
				
				\[\phi(x) = x\sin(\ln(x)^{d - 1})\]
				
				Which roughly behaves like $x\ln(x)^{d - 1}$ so it is clearly almost degree $1$.}
		\end{enumerate}
	\end{proof}
	
	\subsection{Standard domains}
	
	An idea which might already be present from a glance at the axioms and trying to imagine the domains coming from them, is that we will in some sense be dealing with non-traditional domains of analyticity. However we still really want the domains to be large enough such that we have Phraghm\'en-Lindel\"of for exponentially decreasing functions, i.e. if a bounded function on this domain decreases faster than any exponential, it is identically zero. For our context the right notion is called a standard domain in \cite[\S 1.5, Definition 1, p.55]{ilyashenkoFiniteness}.
	
	\begin{Definition}
		A $\Omega \subset \mathbb{C}^{+}$ is called a standard domain if:
		
		\begin{enumerate}
			\item{$\Omega$ is symmetric w.r.t the real axis.}
			\item{There exists a injective complex analytic map $\rho: \Omega \to \mathbb{C}$ mapping reals to reals such that:
				
				\[\rho(\Omega) = \mathbb{C}^{+}\setminus K\]
				
				$K$ being some compactum
				
				\[\rho' = 1 + o(1)\]
				
				The small $o$-notation being in function of the absolute value of the argument.}
			\item{This map $\rho$ extends analytically to some $\Omega_{\delta}$, i.e. all point at distance $< \delta$ from $\Omega$, except possibly on a compact subset.}
		\end{enumerate}
	\end{Definition}
	
	\begin{Remark}
		In particular any bounded analytic function on a standard domain descending faster than any exponential on the real axis must be identically zero by a rudimentary application of Phraghm\'en-Lindel\"of.
	\end{Remark}
	
	Our next goal is to better understand the class of standard domains. Let us start with an alternative criterion coming from \cite[\S 5.4 B]{ilyashenkoFiniteness}: 
	
	\begin{Proposition}\label{PropSDEquiv}
		A domain $\Omega$ is standard iff there exists an injective real analytic mapping $\id + \phi: \ln(\Omega) \to \Pi$, with $\Pi$ the strip symmetric around the positive real axis with width $\pi$ such that:\\ 
		
		First, the image of $\id + \phi$ is $\Pi$ up to some compactum, second, as $\re(z)$ increases:
		
		\[\phi(z) \to 0, \phi'(z) \to 0\]
		
		such that $\rho \coloneqq A^{-1}(\id + \phi)$ extends in the same way as the definition of a standard domain.
	\end{Proposition}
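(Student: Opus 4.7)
The plan is to establish the equivalence through the explicit correspondence $\rho = A^{-1}(\id + \phi)$, equivalently $\rho(z) = z \exp(\phi(\ln z))$ and $\phi(w) = \ln \rho(e^w) - w$, and to verify that the growth and extension conditions translate between the natural and logarithmic coordinates. The key structural observation is that $\Pi$ is exactly the strip with $\exp(\Pi) = \mathbb{C}^{+}$, so image conditions translate directly modulo compact sets, and injectivity transfers since $\exp$ is injective on $\Pi$ and the principal $\ln$ is injective on $\mathbb{C}^{+}$.

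For the direction $(\Leftarrow)$, given $\id + \phi$ with the stated properties, a direct computation yields
\[ \rho'(z) = e^{\phi(\ln z)}\bigl(1 + \phi'(\ln z)\bigr), \]
and since $\phi, \phi' \to 0$ on $\ln(\Omega)$ as $\re(w) \to \infty$, this is $1 + o(1)$. Injectivity and the image condition $\rho(\Omega) = \mathbb{C}^{+} \setminus \exp(K_{\phi})$ follow from their counterparts for $\id + \phi$, and the extension of $\rho$ to some $\Omega_{\delta}$ is part of the hypothesis.

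For the direction $(\Rightarrow)$, given $\rho$ standard, set $\phi := A(\rho) - \id$ on $\ln(\Omega)$. The image condition and injectivity follow from $\rho(\Omega) = \mathbb{C}^{+} \setminus K$ and the fact that the principal $\ln$ bijects $\mathbb{C}^{+}$ onto $\Pi$. The derivative condition $\phi'(w) \to 0$ reduces via
\[ \phi'(w) = \frac{\rho'(e^{w})\, e^{w}}{\rho(e^{w})} - 1 \]
to checking $\rho(z)/z \to 1$, which together with $\phi(w) = \ln(\rho(e^{w})/e^{w}) \to 0$ is the one nontrivial analytic point. To prove it I would integrate $\rho' - 1 = o(1)$ along a path $\gamma$ in $\Omega$ from a fixed base point $z_{0}$ to $z$, using the bound $|\rho(z) - z - (\rho(z_{0}) - z_{0})| \leq \mathrm{length}(\gamma) \cdot \sup_{\gamma} |\rho' - 1|$; since a standard domain necessarily contains a neighbourhood of the positive real axis (a consequence of the symmetry of $\Omega$ together with the near-identity behaviour of $\rho$ on $\Omega \cap \mathbb{R}$), one can choose $\gamma$ of length $O(|z|)$ in $\Omega$, giving $\rho(z) - z = o(|z|)$.

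The main obstacle is this path-integration step: the path $\gamma$ must remain in $\Omega$, have length comparable to $|z|$, and spend most of its length in the region where $\rho' - 1$ is already uniformly small, and the estimate has to be uniform in the direction within $\Omega$ (not only along $\mathbb{R}^{+}$) to conclude $\phi \to 0$ throughout $\ln(\Omega)$. Transferring the $\delta$-analytic extension between $\rho$ and $\id + \phi$ is then routine, since $\exp$ and $\ln$ are bi-analytic on the relevant strips and half-planes, and any loss of radius is absorbed by the ``up to a compactum'' clause in the definition of standard domain.
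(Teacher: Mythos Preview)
Your $(\Leftarrow)$ direction is exactly what the paper does, and in fact that is \emph{all} the paper does: its proof consists of the three observations that $\rho$ lands in $\exp(\Pi)=\mathbb{C}^{+}$ modulo a compactum, that injectivity is inherited from $\id+\phi$, and that $\phi'\to 0$ (together with $\phi\to 0$, which the paper leaves implicit but you make explicit via $\rho'(z)=e^{\phi(\ln z)}(1+\phi'(\ln z))$) forces $\rho'=1+o(1)$. So on that half your write-up is a slightly more careful version of the paper's argument.

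Your treatment of $(\Rightarrow)$ is a genuine addition; the paper simply does not prove it. The strategy of integrating $\rho'-1$ along paths to obtain $\rho(z)-z=o(|z|)$ is correct, and you identify the real issue, namely producing paths in $\Omega$ of length $O(|z|)$ lying mostly where $|\rho'-1|$ is already small. One clean way to dissolve this obstacle is to run the argument on $\rho^{-1}$ instead: since $\rho$ is a biholomorphism onto $\mathbb{C}^{+}\setminus K$ with $(\rho^{-1})'=1+o(1)$, you can integrate $(\rho^{-1})'-1$ along radial (or piecewise radial, avoiding $K$) paths in $\mathbb{C}^{+}$, where length control is trivially $O(|w|)$. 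This yields $\rho^{-1}(w)=w+o(|w|)$, hence $\rho(z)/z\to 1$, and then $\phi\to 0$ and $\phi'\to 0$ follow from your formulas. With that modification your argument is complete and more thorough than the paper's.
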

	
	\begin{proof}
		It is clear that $\rho$ goes from $\Omega$ to $\exp(\Pi)$ which is $\mathbb{C}^{+}$ outside some compactum, it is invertible far enough by the fact the map $\id + \phi$ will be dominated by the identity. Moreover $\phi'$ going to zero implies that $\rho' = 1 + o(1)$.
	\end{proof}
	
	There is a single class of easy to describe standard domains:
	
	\begin{Lemma}\label{LemmaDegdSD}
		Let $U$ be an almost degree $d > 1$ domain, then $U$ contains a standard domain.
	\end{Lemma}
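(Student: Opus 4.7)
The plan is to exhibit an explicit standard subdomain of $U$. Fix some $\epsilon \in (0, d - 1)$ and set
\[\Omega \coloneqq \{z \in \mathbb{C} : \re(z) > a,\ |\im(z)| < (\re(z))^{d - \epsilon}\};\]
since the almost degree $d$ function $f$ witnessing $U$ satisfies $f(x) \geq x^{d - \epsilon}$ for $x$ large, for $a$ large enough $\Omega \subset U$, and $\Omega$ is manifestly symmetric with respect to the real axis. By Proposition \ref{PropSDEquiv} it suffices to construct an injective real-analytic map $\id + \phi : \ln(\Omega) \to \Pi$ whose image fills $\Pi$ up to a compactum, with $\phi, \phi' \to 0$ as the real part tends to $+\infty$.

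The key geometric input is the asymptotic shape of $\ln(\Omega)$. Parametrising the upper boundary by $x \mapsto \ln(x + i x^{d - \epsilon})$ one has $\re = \frac{1}{2}\ln(x^{2} + x^{2(d - \epsilon)}) \sim (d - \epsilon)\ln(x)$ and $\im = \arctan(x^{d - 1 - \epsilon})$; eliminating $x$ yields an upper boundary $\eta = h(\xi)$ with $h(\xi) \sim \arctan\exp\!\left(\frac{d - 1 - \epsilon}{d - \epsilon}\xi\right)$, and $\pi/2 - \arctan(T) \sim 1/T$ gives $\pi/2 - h(\xi) = O(e^{-c\xi})$ for $c = (d - 1 - \epsilon)/(d - \epsilon) > 0$. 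Thus $\ln(\Omega) \subset \Pi$, with boundary exponentially close to $\partial \Pi$.

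I would then build $\phi$ as follows. Invoke the Riemann mapping theorem to obtain a conformal equivalence $F : \ln(\Omega) \to \Pi$, normalised using the symmetry about $\mathbb{R}$ so that $F$ is real-analytic and asymptotic to the identity along the positive real axis (this normalisation is possible because $\ln(\Omega)$ and $\Pi$ share the same conformal modulus at $+\infty$). Set $\phi \coloneqq F - \id$; its imaginary part on $\partial(\ln(\Omega))$ equals $\pm(\pi/2 - h(\xi)) = O(e^{-c\xi})$, and a Phraghm\`en-Lindel\"of argument in $\ln(\Omega)$, applied to $\phi$ as a bounded holomorphic function, upgrades this exponential boundary decay to uniform interior decay $\phi(\zeta) \to 0$. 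The companion estimate $\phi'(\zeta) \to 0$ then follows from Cauchy's integral formula on fixed-radius discs sitting well inside $\ln(\Omega)$. Injectivity and the image being $\Pi$ up to a compactum are inherent in $F$. Finally, the $\delta$-fattening required by the definition of a standard domain is produced by running the construction with $\epsilon$ shrunk slightly, so that the resulting $\Omega$ has a real-analytic extension to $\Omega_{\delta}$.

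The main obstacle is the passage from exponential boundary decay of $\phi$ to interior decay of both $\phi$ and $\phi'$ while preserving real-analyticity. This is morally Phraghm\`en-Lindel\"of in a strip, but must be set up so as to control real and imaginary parts simultaneously; a secondary care point is verifying the identity-normalisation of $F$, which reduces to comparing conformal moduli at $+\infty$ between $\ln(\Omega)$ and $\Pi$.
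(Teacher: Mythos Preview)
Your approach differs from the paper's and, as you yourself flag at the end, carries an unresolved gap. The paper sidesteps all the analytic difficulties by writing down an explicit rectifying map: for each $d>1$ set $\rho(z)=z+(z+2)^{1/d}$ on $\mathbb{C}^{+}$, check directly that $\rho'=1+o(1)$, prove injectivity via the elementary estimate $|\rho(z)-\rho(\tilde z)|\ge(1-2^{1/d-1})|z-\tilde z|$, and then parametrise $\rho(i\mathbb{R})$ to see that $\rho(\mathbb{C}^{+})$ is a standard domain whose boundary grows like $|\im|\sim\re^{d}$. Any almost-degree-$d'$ domain with $d'>1$ then contains such a $\rho(\mathbb{C}^{+})$ for $1<d<d'$, and the $\delta$-extension is automatic because $\rho$ is defined on all of $\mathbb{C}^{+}$.

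The gap in your argument is precisely where you locate it, but it is a real gap and not a formality. From the boundary correspondence of the Riemann map $F$ you only know $\im\phi=\pm(\pi/2-h(\xi))$ on $\partial(\ln\Omega)$; the real part $\re\phi$ on the boundary depends on the a priori unknown boundary parametrisation induced by $F$ and is not obviously bounded, let alone decaying. Phragm\'en--Lindel\"of needs control of $\phi$ itself along $\partial(\ln\Omega)$, which you do not have. Closing this requires something like Warschawski's theorems on the asymptotics of conformal maps of strip-like domains---substantial machinery compared to the paper's two-line explicit map. Your remark that the domains ``share the same conformal modulus at $+\infty$'' is the correct heuristic but is not itself a proof of the normalisation. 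A secondary point: your $\delta$-fattening by shrinking $\epsilon$ changes both $\Omega$ and its Riemann map, so does not extend the original $\rho$; the cleaner fix is to note that $\partial(\ln\Omega)$ is real-analytic and invoke Schwarz reflection to extend $F$ across it.
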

	
	\begin{proof}
		We will do this by explicitly giving a standard domain of maximally almost degree $d$ for all $d > 1$ and because any domain of almost degree $d'$ must contain all domains of maximally almost degree $d < d'$ we will be done.\\
		
		So consider the function:
		
		\[\rho(z) \coloneqq z + (z + 2)^{\frac{1}{d}}\]
		
		Because $d > 1$, $\frac{1}{d} - 1 < 0$ and then:
		
		\[\rho'(z) = 1 + \frac{1}{d}(z + 2)^{\frac{1}{d} - 1}\]
		
		And on $\mathbb{C}^{+}$:
		
		\[\left|\frac{1}{d}(z + 2)^{\frac{1}{d} - 1}\right| \leq \frac{(\re(z) + 1)^{\frac{1}{d} - 1}}{d} < 1\]
		
		Thus $\rho'$ is a local biholomorphism on $\mathbb{C}^{+}$. So now we only want to prove that $\rho$ is injective. We want to prove that for all $z, \tilde{z}$ different:
		
		\[|\rho(z) - \rho(\tilde{z})| > 0\]
		
		Now:
		
		\begin{align*}
			|\rho(z) - \rho(\tilde{z})| &\geq \left||z - \tilde{z}| - \left|(z + 2)^{\frac{1}{d}} - (\tilde{z} + 2)^{\frac{1}{d}}\right|\right|\\ 
			& = \left||z - \tilde{z}| - \left|\int_{\tilde{z}}^{z}(w + 2)^{\frac{1}{d} - 1}dw\right|\right|\\
			& = \left||z - \tilde{z}| - |z - \tilde{z}|\left|\int_{0}^{1}(\tilde{z} + t(z - \tilde{z}) + 2)^{\frac{1}{d} - 1}dt\right|\right|\\
		\end{align*}
		
		Now, by virtue of $z, \tilde{z} \in \mathbb{C}^{+}$:
		
		\[|\tilde{z} + t(z - \tilde{z}) + 2| \geq 2\]
		
		Thus:
		
		\[|\rho(z) - \rho(\tilde{z})| > \left(1 - 2^{\frac{1}{d} - 1}\right)|z - \tilde{z}|\]
		
		Thus $\rho$ is a biholomorphism and $\rho(\mathbb{C}^{+})$ is a standard domain. Let us now study its boundary, by the fact that $\rho$ is a biholomorphism, we know that the boundary is the image of the boundary, i.e. it is $\rho(it)$ for $t \in \mathbb{R}$ and:
		
		\begin{align*}
			\rho(it) &= it + \left(2 + it\right)^{\frac{1}{d}} \\
			&= (4 + t^{2})^{\frac{1}{2d}}\cos\left(\frac{\arctan\left(\frac{t}{2}\right)}{d}\right) + i\left(t + (4 + t^{2})^{\frac{1}{2d}}\sin\left(\frac{\arctan\left(\frac{t}{2}\right)}{d}\right)\right)
		\end{align*}
		
		We now want to know dominating behaviour, note that because $d > 1$, both the sine and cosine will tend to some non-zero value, respectively $\sin\left(\frac{\pi}{2d}\right)$ and $\cos\left(\frac{\pi}{2d}\right)$ so in asymptotic considerations we can ignore both of then, then the function after some rescaling looks like:
		
		\[(4 + t^{2})^{\frac{1}{2d}} + i\left(t + (4 + t^{2})^{\frac{1}{2d}}\right)\]
		
		Let us introduce a new variable $s = (4 + t^{2})^{\frac{1}{2d}}$, then $t = (s^{2d} - 4)^{\frac{1}{2}}$ and the boundary is clearly parametrized by:
		
		\[s + i\left[(s^{2d} - 4)^{\frac{1}{2}} + s\right]\]
		
		which is clearly maximally nearly degree $d$.
	\end{proof}
	
	\subsection{A consequence of the ordering theorem}
	
	Before we get to asymptotics of $FC^{n}_{i}$, the domains and the upper bound of $A1$ we will first need some information on asymptotics of elements of $G^{n}$ and some other Lemma's.
	
	\begin{Proposition}\label{PropGMGE}
		Suppose that $OT_{n}$ holds, then for all $g, h \in G^{n - 1}$: 
		
		\begin{enumerate}
			\item{The following limits exist, for all $k \leq n$: 
				
				\[L_{k}(g) \coloneqq \lim_{x \to +\infty}\frac{A^{-k}(g)(x)}{x} \in [0, +\infty]\]
				
				If $k = n$ we call this limit the generalized multiplier of $g$ and denote it $GM(g)$, if $k = n - 1$ we call this limit the generalized exponent of $g$ and denote it $GE(g)$.}
			\item{if $g \leq h$, then $L_{k}(g) \leq L_{k}(h)$.}
			\item{If $L_{k}(g) < \infty$, then $L_{k -1}(g) \leq 1$, if also $L_{k} > 0$ then $L_{k -1}(g) = 1$.}
			\item{We have $0 < L_{k}(g\circ h^{-1}) < \infty$ if and only if $g \sim_{k} h$.}
		\end{enumerate}
	\end{Proposition}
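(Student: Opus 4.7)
The plan is to reduce everything to the additive decomposition theorem $ADT_{n-1}$, which follows from $OT_{n}$ via the chain $OT_{n} \Rightarrow MDT_{n} \Rightarrow ADT_{n}$ already established in the sufficiency proof, and then to analyse $A^{-k}$ iteratively. By $ADT_{n-1}$, any $g \in G^{n-1}$ admits a decomposition $g = a + \phi$, where $a(y) = \alpha y + \beta$ is affine with $\alpha > 0$ and $\phi$ is a finite sum of terms each lying in some $\mathcal{F}^{m}_{j\bar{f}}$. Axiom $A1$ forces each such term to be dominated by $e^{-\mu \exp^{m}(f(y))}$ for some $\mu > 0$, so $\phi(y) \to 0$ as $y \to \infty$ at worst exponentially. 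This immediately gives $L_{0}(g) = \alpha \in (0, \infty)$.

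For the inductive step in point (1), I would use the identity
\[
\frac{A^{-k}(g)(x)}{x} = \exp\bigl(A^{-(k-1)}(g)(\ln x) - \ln x\bigr),
\]
so $L_{k}(g)$ is determined by the (extended) limit of $A^{-(k-1)}(g)(y) - y$. If $L_{k-1}(g) > 1$, this difference tends to $+\infty$ and $L_{k}(g) = +\infty$; if $L_{k-1}(g) < 1$, it tends to $-\infty$ and $L_{k}(g) = 0$. The delicate case is $L_{k-1}(g) = 1$, in which one expands $A^{-(k-1)}(g)(y) - y$ using the $ADT_{n-1}$ form of $g$ and unwinds it through $k - 1$ applications of $\exp$ and $\ln$, using the total order on $G^{n-1}/\!\sim_{n}$ supplied by $OT_{n}$ to pick out a single leading subdominant contribution whose Archimedean class pins down the (possibly infinite) limit.

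Points (2) and (3) are then essentially corollaries. Point (2) follows because $A^{-k} = \exp^{k} \circ \cdot \circ \ln^{k}$ is built from order-preserving maps, so $g \leq h$ eventually yields $A^{-k}(g) \leq A^{-k}(h)$ eventually, and hence $L_{k}(g) \leq L_{k}(h)$. Point (3) reads directly off the trichotomy above: $L_{k}(g) < \infty$ rules out $L_{k-1}(g) > 1$, and the extra assumption $L_{k}(g) > 0$ rules out $L_{k-1}(g) < 1$, forcing $L_{k-1}(g) = 1$.

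For point (4), I note that $G^{n-1}$ is a group under composition, so $g \circ h^{-1} \in G^{n-1}$ and point (1) supplies the limit. The substitution $z = h^{-1}(\ln^{k} x)$ converts the definition of $L_{k}(g \circ h^{-1})$ into
\[
L_{k}(g \circ h^{-1}) = \lim_{z \to \infty} \frac{\exp^{k}(g(z))}{\exp^{k}(h(z))},
\]
and by Remark 2.4.2 the ratio $\exp^{k}(g)/\exp^{k}(h)$ is bounded above and below by positive constants precisely when $g \sim_{k} h$. Combining this with the existence of the limit from point (1) gives the desired equivalence. I expect the main obstacle to be the critical case $L_{k-1}(g) = 1$ in point (1): showing the iterated limit genuinely exists in $[0, +\infty]$ requires careful bookkeeping of the hierarchy of sub-leading terms in the $ADT_{n-1}$ decomposition and using the fact that each step through $\exp$ faithfully transfers the ordering from one level of the hierarchy to the next.
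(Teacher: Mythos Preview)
Your argument for points (2), (3), and (4) matches the paper's essentially verbatim. The divergence is in point (1), and there the paper's argument is both simpler and complete where yours is not.

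You try to build the existence of $L_{k}(g)$ by induction on $k$, unwinding $A^{-k}(g)$ via the identity
\[
\frac{A^{-k}(g)(x)}{x} \;=\; \exp\bigl(A^{-(k-1)}(g)(\ln x) - \ln x\bigr),
\]
and you correctly identify the critical case $L_{k-1}(g)=1$ as the hard one: here you need the (extended) limit of $A^{-(k-1)}(g)(y)-y$, and you propose to extract it from the hierarchy of sub-leading terms in the $ADT_{n-1}$ decomposition of $g$. That programme is plausible but you have not carried it out, and it is not obvious how the $ADT$ form of $g$ propagates through $k-1$ iterations of $A^{-1}$ in a way that lets you read off this limit; you would effectively need an $ADT$-type structure for $A^{-(k-1)}(g)-\id$, which is not what $ADT_{n-1}$ gives you directly.

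The paper bypasses this entirely. Since $A^{k}(\Aff)\subset G^{n-1}$ for every $k\le n$ (this was established during the proof of the Structural Theorem), and since $OT_{n}$ says $G^{n-1}$ is \emph{totally ordered}, one may simply set
\[
L_{k}(g)\;=\;\sup\{\,C>0 : A^{k}(C\,\id)\le g\,\}.
\]
Total ordering guarantees that for $C<L_{k}(g)$ one has $C\,x\le A^{-k}(g)(x)$ eventually, and for $C>L_{k}(g)$ the reverse; hence $\liminf$ and $\limsup$ of $A^{-k}(g)(x)/x$ are squeezed to $L_{k}(g)$ and the limit exists in $[0,+\infty]$ with no case analysis at all. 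Point (2) is then immediate from this sup description, and point (3) follows by the same elementary computation you give. The moral is that the total order on $G^{n-1}$ already encodes the existence of the limit; there is no need to touch $ADT$ or track sub-leading terms.
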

	
	\begin{proof}
		Let us go point by point:
		
		\begin{enumerate}
			\item{In part $2$ we have proven that:
				
				\[A^{n}(\Aff), A^{n - 1}(\Aff) \subset G^{n - 1}\]
				
				So because applying $A^{-1}$ preserves order, it is clear by $OT_{n}$ implying that $G^{n - 1}$ is totally ordered, that:
				
				\[L_{k} = \sup\{C > 0 \mid A^{k}(C\id) \leq g\}\]}
			\item{Trivial by the previous alternative definition for $L_{k}$.}
			\item{Let $\lambda = L_{k} > 0$, then there exists some function $\epsilon$ going to zero such that:
				
				\[A^{-k}(g)(x) = \lambda x + \epsilon(x)x\]
				
				Applying $A$ to both sides gives that:
				
				\[A^{-(k - 1)}(g)(x) = x + \ln(\lambda) + \ln(1 + \epsilon(x))\]
				
				So it is clear that $L_{k - 1} = 1$, then by the second point the case $\lambda = 0$ follows.}
			\item{By definition $L_{k} = \lambda$ with $0 < \lambda < \infty$ means that:
				
				\[\lim_{x \to \infty}\frac{A^{-k}(g \circ h^{-1})(x)}{x} = \lambda\]
				
				precomposing with $\exp^{k}$ this says that:
				
				\[\lim_{x \to \infty}\frac{\left[\exp^{k}\circ (g \circ h^{-1})\right](x)}{\exp^{k}(x)} = \lambda\]
				
				precomposing with $h$ gives:
				
				\[\lim_{x \to \infty}\frac{\exp^{k}(g(x))}{\exp^{k}(h(x))} = \lambda\]
				
				which implies that $g \sim_{k} h$. Conversely take $g, h \in G^{n - 1}$ such that $g \sim_{k} h$, then it is clear that it is not possible that the limit for $L_{k}(g\circ h^{-1}) = 0, +\infty$ by exactly the same reasoning above.}
		\end{enumerate}
	\end{proof}
	
	\begin{Remark}
		The interest in studying these limits come from axiom $A4$ and $A5$ where one needs to precompose $\phi \in FC^{n}_{i}$ with elements $A^{-n}(g) \in A^{-n}(G^{n - 1})$, so let $U$ be the domain on which $\phi$ is defined, then we need to have some control over the domain on which $\phi \circ A^{-n}(g)$ is defined. Assuming for a second that $g$ is controllable, which we will prove later, then we can approximates what happens to a domain $U$ under the map $A^{-n}(g)$ by the values of $A^{-n}(g)$ on the real axis using Proposition \ref{PropControlFuncBound}.\\
		
		Note that if $GM(g) \in (0, +\infty)$, then on the real axis:
		
		\[A^{-n}(g)(x) = (GM(g) + o(1))x\]
		
		And if $GE(g) \in (0, +\infty)$, then on the real axis:
		
		\[A^{-n}(g)(x) = x^{GE(g) + o(1)}\]
		
		In principle the existence of the limit $L_{k}$ says something about how much oscillation there exists in the function, for example note that:
		
		\[\lim_{x \to \infty}\frac{(2 + \sin(x))x}{x}\]
		
		does not exist, however should we apply $A$ to this function we will get:
		
		\[\ln\left(2 + \sin\left(e^{x}\right)\right) + x\]
		
		and the limit of this divided by $x$ will exist. So the existence of $L_{k}(g)$ for high $k$ says something about the size of possible oscillations in $g$. Note that $G^{n - 1} \subset G^{n}$, so for $g \in G^{n}$ all $L_{k}(g)$ will have been shown to exist at the end.
	\end{Remark}
	
	\subsection{Main result}
	
	\begin{Proposition}\label{PropositionUpperBoundA1}
		Suppose that for $m \leq N$, all $FC^{m}$ satisfy $A1$ (on the real line). Then for all $n \leq N$:
		
		\begin{enumerate}
			\item[$1_{n}$]{\begin{enumerate}
					\item{For every element $\alpha$ of $FC^{n}$ there exists a $g \in G^{n - 1}_{\sim\id}$ such that $\alpha$ can be analytically continued to $A^{-n}(g^{-1})(\Omega)$ for some standard domain $\Omega$.}
					\item{All $\alpha \in FC^{n}$, up to any polynomially bounded (Definition \ref{DefPolyBound}) subset of the domains as described in the first point, admits an upper bound of the form:
						
						\[|\alpha(\zeta)| \leq e^{-\lambda \re(\zeta)}\]}
			\end{enumerate}}
			\item[$2_{n}$]{For every element $g \in G^{n}$ there exsts a compactum $K$ such that $g$ is controllable on some $h^{-1}\ln^{n}(\Omega)$, $\Omega$ a domain of almost degree $d > 1$ and $h \in G^{n - 1}$. Even more, on that domain there exists a $C > 0$ such that:
				
				\[|g(z)| \leq C\re(z), |g'(z)| \leq C\]
				
				Note that $G^{n}$ is closed under inverses thus we get a similar lower bound.}
			\item[$3_{n}$]{For every $g \in G^{n}_{\preceq\id}$ and every standard domain $\Omega$, $A^{-(n + 1)}(g)(\Omega)$ also contains a standard domain.}
		\end{enumerate}
		
		Finally in this case $1_{N + 1}$ also holds.
	\end{Proposition}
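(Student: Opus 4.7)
The plan is a strong induction on $n$, proving $1_n$, $2_n$, $3_n$ in that order within each step and then observing that the deduction of $1_{N+1}$ is identical to that of the inductive step for $1_n$, since only $2_m$, $3_m$ with $m \leq N$ (which are in the induction hypothesis) are invoked. Within step $n$ the dependencies are: $1_n$ needs $2_{n-1}$ (for controllability of precomposing maps) and $3_{n-1}$ (to know that pulled back domains remain large enough); $2_n$ needs $1_n$, the Shift Theorem $ST_n$, and point 2 of Proposition \ref{PropSupplAxioms}; $3_n$ combines $2_n$ with Lemma \ref{LemmaExpDDom} and Lemma \ref{LemmaDegdSD}. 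The base case $n=0$ is separate: $G^{-1} = \Aff$ and the generators of $FC^0$ reduce to $(\mathcal{R}^0 - \id)$ (using $\mathcal{NC}=\{\id\}$), which are analytic on half planes, so all three statements at $n=0$ are read off from the definition of convergence of $\mathcal{R}$.

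For $1_n$ I would run an auxiliary induction on the level $k$ in the construction $FC^{n,k,i}$. At level $k=0$ the only generators are $\ln^n \circ r - \ln^n$ for $r \in \mathcal{R}^0$, whose domain of analyticity is $\ln^n$ applied to a half plane, which equals $A^{-n}(\id)(\Omega_0)$ with $\Omega_0$ standard and $\id \in G^{n-1}_{\sim\id}$. At the inductive step, A3 (finite sums) is handled by intersecting the polynomially bounded subdomains; A4 precompositions with $\rho \in \id + FC^n \circ A^{-n}(G^{n-1}_{\preceq\id})$ require pulling back through $A^{-n}(g')$, and by $2_{n-1}$ the map $g'$ is controllable so that Proposition \ref{PropControlFuncBound} guarantees the preimage of a polynomially bounded portion of $A^{-n}(g^{-1})(\Omega)$ sits inside the previously constructed domain; A5 compositions $\exp \circ (\cdots)$ are controlled by Proposition \ref{Prop Off real est} iterated, together with Lemma \ref{LemmaExpDDom} to certify the new domain is of the required type. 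Part (b) then follows by $A1$ on the real axis combined with controllability: on a polynomially bounded horizontal strip about the real axis, the function $\alpha$ has controllable growth on the boundary (subexponential) and exponential decay $e^{-\mu x}$ on the axis, and a standard Phragm\'en--Lindel\"of estimate across the strip transfers the decay to the whole subdomain at the cost of replacing $\mu$ by a slightly smaller positive constant.

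For $2_n$ I decompose $g \in G^n$ via $ST_n$ as $g = a \circ h_0 \circ j_0 \circ h_1 \circ \cdots \circ j_{n-1} \circ h_n$ with $a \in \Aff$ and $h_k \in H^k_0$; in the convergent setting $J^k = \{\id\}$ so the $j_k$ are trivial, and each $h_k$ is a finite product of maps $\id + \phi$ with $\phi \in FC^k$. Part $1_k$ gives analytic continuation and controllability of each $\phi$ on a suitable domain; controllability is preserved under composition via the chain rule and under $A^{-1}$ by Proposition \ref{Prop Off real est}, so iterating produces the required domain $h^{-1}\ln^n(\Omega)$ with $\Omega$ of almost degree $>1$. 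The real-axis bounds $|g(x)| \leq Cx$ and $|g'(x)| \leq C$ are point 2 of Proposition \ref{PropSupplAxioms}, and they propagate to the complex domain through Cauchy estimates from controllability. For $3_n$ I start with a standard $\Omega$ and rewrite $A^{-(n+1)}(g)(\Omega) = \exp^{n+1}(g(\ln^{n+1}(\Omega)))$: the domain $\ln^{n+1}(\Omega)$ is an almost degree $\leq 0$ thin region obtained by iterating logs, by $2_n$ and Proposition \ref{PropControlFuncBound} the image under $g \in G^n_{\preceq\id}$ remains of almost degree $\leq 0$, and then iterating Lemma \ref{LemmaExpDDom} through $\exp^{n+1}$ yields a domain of almost degree $>1$, which by Lemma \ref{LemmaDegdSD} contains a standard domain.

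The main obstacle is the quantitative bookkeeping for $1_n$(b) across the axiom closures: each composition $\phi \circ \rho$ at the inductive step for $FC^n$ requires showing that the polynomially bounded subdomain on which the exponential decay is valid is preserved (up to admissible shrinking by further polynomially bounded amounts) and that the decay rate $\mu$ does not degenerate to zero after finitely many operations. This is precisely the content of Remark \ref{RemA1Bounds} put to work: the upper bound in $A1$ is what allows $\mu$ to remain strictly positive under compositions with controllable $A^{-n}(g)$, because $g \in G^{n-1}_{\sim\id}$ forces $A^{-n}(g)$ to be comparable to the identity on the real axis in the relevant sense. Threading this consistency through the nested structural induction on $k$, while ensuring that in $1_n$(a) the choice of $g \in G^{n-1}_{\sim\id}$ keeps the template domain $A^{-n}(g^{-1})(\Omega)$ of almost degree greater than $1$, is the technically delicate part of the argument.
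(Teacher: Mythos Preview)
Your inductive scheme $1_n \Rightarrow 2_n \Rightarrow 3_n$ with the observation that $1_{N+1}$ needs only levels $\leq N$ matches the paper exactly, and your structural induction on $k$ for $1_n$ is the right scaffolding. Two points, however, deserve comment.

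\medskip

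\textbf{The main gap is in $1_n$(b).} You propose to obtain the complex exponential bound from the real-axis bound of $A1$ via Phragm\'en--Lindel\"of: controllable (subexponential) growth on the boundary of a polynomially bounded strip together with $e^{-\mu x}$ on the axis should, you say, yield $e^{-\lambda \re z}$ throughout. This does not work. If $|\alpha(x\pm i h(x))|\le e^{x^{\epsilon}}$ on the boundary, then for any $\lambda>0$ the function $\alpha(z)e^{\lambda z}$ is unbounded on that boundary, so neither the three-lines theorem nor any standard PL variant transfers the axial decay inward. The paper does not use PL here at all; instead it \emph{carries the exponential upper bound through every closure step} of the construction of $FC^{n,k}$. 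For $A7$ one expands $e^{\alpha}-1$ and $\ln(1+\alpha)$ as geometric-type series; for $A6$ one writes $(\id+\alpha)^{-1}-\id$ as a Cauchy integral $\frac{1}{2\pi i}\int \frac{-\alpha(z)(1+\alpha'(z))}{z+\alpha(z)-w_0}\,dz$ and uses Cauchy estimates on circles of radius $e^{-\re(z)^{1/2}}$; for $A4$ one uses that the precomposing map is near-linear (via Proposition~\ref{PropGMGE}) or near-identity; for $A5$ one writes $\phi(\psi+\alpha)-\phi(\psi)=\int_{\psi}^{\psi+\alpha}\phi'$ and bounds $\phi'$ by Cauchy. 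The complex bound is thus rebuilt from scratch at every level; the real-axis hypothesis $A1$ enters only indirectly, by making $ADT_n$ available for the proof of $2_n$.

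\medskip

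\textbf{A genuine methodological difference in $2_n$.} You decompose $g\in G^n$ multiplicatively via $ST_n$; the paper decomposes additively via $ADT_n$, writing $g=a+\phi_1+\cdots+\phi_v$ with each $\phi_i\in\mathcal{F}^{m_i}_{\bar f_i}$. The additive form makes the bounds $|g(z)|\le C\re(z)$ and $|g'(z)|\le C$ immediate once each summand is exponentially small, and reduces the domain bookkeeping to a finite intersection handled by $3_{m_i}$. Your multiplicative route is not wrong, but tracking domains through an iterated composition of near-identity maps (each living on its own $A^{-k}(h^{-1})(\Omega)$) is materially harder than intersecting the domains of additive pieces. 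Similarly, for $3_n$ the paper does not iterate $\exp$ via Lemma~\ref{LemmaExpDDom} as you propose; it writes $A^{-m}(g)=\id+\phi_m$, shows $\phi_n\to 0$ and $\phi_n'\to 0$ on the relevant strip using Proposition~\ref{PropControlFuncBound} and Proposition~\ref{PropGMGE}, and then invokes the characterisation Proposition~\ref{PropSDEquiv} directly.
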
 
	
	\begin{proof}
		We will prove this by induction on $N$ and proceed by induction on $n$.\\ 
		
		Now all of $2_{-1}$, $2_{0}$, $3_{-1}$ and $3_{0}$ (using Dulac series and approximating by linear functions) are obvious, we will proceed by induction along the following scheme:
		
		\[1_{m} + 2_{m} + 3_{m}, (m < n) \Rightarrow 1_{n}\]
		
		\[1_{m} + 2_{m} + 3_{m}, (m < n) + 1_{n} \Rightarrow 2_{n}\]
		
		\[2_{n} \Rightarrow 3_{n}\]
		
		The crucial part is that the second implication will use $ADT_{n}$ thus we only get up to $3_{n}$ but the first step needs nothing so we get to $1_{N + 1}$.\\
		
		The first part is the hardest so let us do the second and third first:\\
		
		\underline{$1_{m} + 2_{m} + 3_{m}, (m < n) + 1_{n} \Rightarrow 2_{n}$}:\\
		
		Take a $g \in G^{n}$, by Proposition \ref{PropSupplAxioms} we can use ADT by our assumption. So we can write our $g$ as:
		
		\[g = a + \phi_{1} + \cdots + \phi_{v}\]
		
		where $a \in \Aff$, $\phi_{i} \in \mathcal{F}^{m_{i}}_{\bar{f}_{i}}$ in such a way that each $m_{i} \leq n - 1$ and:
		
		\[(m_{i}, \bar{f}_{i}) < (m_{i + 1}, \bar{f}_{i + 1})\]
		
		So let $\alpha \in FC^{m}_{1}$ we are then interested in the domain of $\alpha \circ \exp^{m} \circ f$ or equivalently we are interested in the domain of $\alpha \circ A^{-m}(f)$.\\ 
		
		Let us discuss the domain of $\alpha \circ A^{-m}(f)$, now by $1_{m}$, $\alpha$ itself has as domain with good estimates some polynomially bounded subset of $A^{-m}(h_{1}^{-1})(\mathbb{C}^{+})$ for some $h_{1} \in G^{m - 1}$. Moreover by $2_{m}$, $A^{-m}(f)$ has the domain $A^{-m}(h_{2}^{-1})(\Omega_{1})$ for some domain $\Omega_{1}$ of almost degree $> 1$ and some $h_{2} \in G^{m - 1}$. We can now look at the part of the domain of $\alpha$ where this composition is well defined and it is:
		
		\[A^{-m}(h_{1}^{-1})(\mathbb{C}^{+}) \cap A^{-m}(f \circ h_{2}^{-1})(\Omega_{1})\]
		
		which by $3_{m}$ contains some $A^{-m}(h^{-1})(\Omega_{2})$, again $\Omega_{2}$ a domain of almost degree $> 1$ and $h \in G^{m - 1}$. So the domain of $\alpha \circ A^{-m}(f)$ includes $A^{-m}(f^{-1}\circ h^{-1})(\Omega_{2})$.\\
		
		Note that we can use Lemma \ref{LemmaExpDDom} to now control domains in the following sense, Let $m = n - 1$ then we can combine everything into a domain we want by statement $3_{n - 1}$, for $m < 0$ we started with a standard domain and we take essentially some exponentials of it, so by Lemma \ref{LemmaExpDDom} we end up with $\mathbb{C}^{+}$ at most outside some compactum, we can ignore all those contributions.\\
		
		Because we have an upper estimate of $(f^{-1}\circ h^{-1})'$ by Proposition \ref{Prop Off real est} and controllability of $2_{m}$, we know by Cauchy estimates with circles of radius $e^{-x^{\frac{1}{2}}}$ that $\alpha$ and its derivative go to zero exponentially. This gives both the upper bound for $g$ and $g'$ that we need.\\ 
		
		So let us move on to the second step:\\
		
		\underline{$2_{n} \Rightarrow 3_{n}$}\\
		
		Let us write for some $g \in G^{n}$:
		
		\[A^{-m}(g) = \id + \phi_{m}\]
		
		it is easy to prove using Proposition \ref{PropControlFuncBound} that for any strip $\Pi$, $\im(\phi_{m})$ goes to zero on $\ln^{n - m}(\Pi)$, $m \leq n$ (Note that this is not an optimal domain).\\
		
		From this and the fact that:
		
		\[\phi_{m}(z) = z\cdot(e^{\phi_{m - 1}\circ \ln} - 1)\]
		
		we can show that $\arg(\phi_{n})$ goes to zero or $\pi$ depending on which one is larger as $|z|$ grows on $\exp(\Pi)$.\\
		
		From this we can conclude using Proposition \ref{PropGMGE} that if $A^{-(n + 1)}(g) = o(1)x$ on $\exp(\Pi)$ and thus on $\Pi$:
		
		\[A^{-n}(g) = x + \ln(o(1))\]
		
		with the imaginary part of $\ln(o(1))$ going to zero, the rest follows from Proposition \ref{PropSDEquiv}.\\
		
		Otherwise we have much the same with: $A^{-(n + 1)}(g) = Cx(1 + o(1))$ and thus:
		
		\[A^{-n}(g) = x + \ln(C(1 + o(1)))\]
		
		So let us move on to the final step.\\
		
		\underline{$1_{m} + 2_{m} + 3_{m}, (m < n) \Rightarrow 1_{n}$}:\\
		
		We will prove the case $FC^{n}$ by proving all three statements for all $FC^{n, k, 0}$ by induction on $k$.\\
		
		\underline{\textbf{Base of induction: $k = 0$}}:\\
		
		Let us go item by item:
		
		\begin{enumerate}
			\item[Domains]{By definition:
				
				\[FC^{n, 0, 0} = (A^{n}\mathcal{R}^{0} - \id)\circ\ln^{n}\]
				
				Now an element of $\mathcal{R}^{0}$ can be analytically continued to some shifted version of $\mathbb{C}^{+}$ then $A^{n}$ of this element is defined on $\ln^{n}(\Omega)$, subtracting the identity makes no difference in domain, then precomposing with $\ln^{n}$ makes everything defined on a shifted $\mathbb{C}^{+}$ again, which certainly contains a standard domain and then we can take the $g$ in the definition the identity.}
			\item[Asymptotics]{With $FC^{n, 0, 0}$ one starts with the Dulac series from $\mathcal{R}^{0}$. Note then that:
				
				\[A\left(\zeta \mapsto \zeta + \zeta^{h}e^{-\lambda \zeta}\right) = \ln\left(e^{\zeta} + e^{h\zeta}e^{-\lambda e^{\zeta}}\right) = \zeta + \ln\left(1 + e^{(h - 1)\zeta}e^{-\lambda e^{\zeta}}\right)\]\\
				
				Done inductively one sees that applying $A^{n}$ to a near identity Dulac series gives $\id$ plus series in the variables $\exp(\zeta), ..., \exp^{n + 1}(\zeta)$, every term having a factor $\exp^{n + 1}(\zeta)$. Subtracting $\id$ and precomposing as in $FC^{n, 0, 0}$ with $\ln^{n}$ gives series in the variables $\ln^{n - 1}(\zeta), ..., \exp(\zeta)$, every term having a factor $\exp(\zeta)$. So we can use the fact that Dulac series give asymptotic approximations to get an upper bound for $FC^{n, 0, 0}$ exactly as in $A1$ (again see \cite{Kaiser2017AnalyticCO} for a more formal version).}
		\end{enumerate}
		
		\underline{\textbf{Induction step: done for all $h < k$, $m < n$}}:\\
		
		It is clear that none of the three things we need to prove are affected by sums or scalar multiplication so we only need to deal with going from $FC^{n, k - 1, 1}_{i}$ to $FC^{n, k, 0}_{i}$, i.e. elements added by the axioms $A4-A7$.\\
		
		\underline{\textbf{Axiom $A7$}}:\\
		
		It is clear that $A7$ can never alter domain, so let us consider asymptotics:\\ 
		
		Clearly multiplying or dividing by some iterated logarithm will not matter. Let $\alpha \in FC^{n, k - 1, 1}_{i}$ then there are two things for which we need to find an exponential upper bound:
		
		\[|\ln(1 + \alpha)| \quad, \quad |e^{\alpha} - 1|\]
		
		The basic principle is the same, develop into Taylor series and start estimating, in fact upon ignoring the coefficients for the Taylor series of the exponential both can be estimated by:
		
		\[\sum_{n \geq 1}|\alpha(x)|^{n} = \frac{|\alpha|}{1 + |\alpha(x)|}\]
		
		which is clearly exponentially descending.\\
		
		\underline{\textbf{Axiom $A6$}}:\\
		
		Axiom $A6$ will be dealt with in the following fashion, let $f$ be an invertible analytic function, note then that by Cauchy's Theorem:
		
		\[f^{-1}(w_{0}) - w_{0} = \frac{1}{2\pi i}\int\frac{f^{-1}(w) - w}{w - w_{0}}dw\]
		
		making the substitution $w = f(z)$ we get:
		
		\[f^{-1}(w_{0}) - w_{0} = \frac{1}{2\pi i}\int\frac{(z - f(z))f'(z)}{f(z) - w_{0}}dz\]
		
		Let us now take $\alpha \in FC^{n, k - 1, 1}_{i}$ and take $f = \id + \alpha$ we get:
		
		\[(\id + \alpha)^{-1}(w_{0}) - w_{0} = \frac{1}{2\pi i}\int\frac{\alpha(z)(1 + \alpha'(z))}{z + \alpha'(z) - w_{0}}dz\]
		
		noting that performing Cauchy estimates on circles of radius $e^{-\re(z)^{\frac{1}{2}}}$ will still preserve our domains and we get both the domain and the estimates we want. We only need to prove that $\id + \alpha$ is invertible for $\re(z)$ large enough.\\ 
		
		Let $f(z) = z + \alpha(z)$, suppose there exists an almost degree $0$ or $1$ function $\tilde{f}$ such that for all $x \geq x_{0}$ $\alpha$ is defined on the line between $(x, \tilde{f}(x))$ and $(x, -\tilde{f}(x))$, let us then consider $\tilde{f}(z) = f(z + 2x_{0}) - 2x_{0}$, $\tilde{\alpha} = \alpha(z + 2x_{0})$ note that by definition:
		
		\[\tilde{f}(z) = z + \tilde{\alpha}(z)\]
		
		but unlike with $\alpha$, $\tilde{\alpha}^{i}$, i.e. the $i$-fold composition makes sense and we can explicitly give the inverse of $\tilde{f}$ as:
		
		\[\tilde{f}^{-1}(w) = \sum_{i = 0}^{\infty}\tilde{\alpha}^{i}(w)\]
		
		this implies that $z \mapsto \tilde{f}(z) - 2x_{0} = f(z + 2x_{0})$ will eventually be injective and because $f'$ will remain nonzero by Cauchy estimates we know $f$ will eventually be injective on the type of domain that is necessary (noting that the $\tilde{f}$ stays, i.e. we do not need to make the domain smaller or anything, i.e. in the case of $i = 0$ we actually do get what we want).\\
		
		\underline{\textbf{Axiom $A4$ Domain}}:\\
		
		Axiom $A4a$ is the right domain by construction.\\
		
		Axiom $A4b$ is the same story as $A4a$ only we now taylor expand the composition to get as domain the intersection of the two domains of $FC^{n}$ and for this we use $3_{n}$.\\
		
		\underline{\textbf{Axiom $A4$ Asymptotics}}:\\
		
		With axiom $A4a$ it is clear by Proposition \ref{PropGMGE} that one precomposes with nearly a linear function, so an exponential upper bound is kept.\\
		
		Axiom $A4b$ is even easier, by Proposition \ref{PropSupplAxioms}, any element in $G^{n - 1}$ goes to infinity and so does an element of $A^{-n}(G^{n - 1})$, so here we precompose with something near identity.\\
		
		\underline{\textbf{Axiom $A5$}}:\\
		
		Let $\alpha \in FC^{n, k - 1, 1}$, $\phi$ and $\psi$ be as in axiom $A5$.\\
		
		Let us first discuss the easiest one, axiom $A5c$, here we are interested in:
		
		\[(e^{\alpha} - 1)e^{\psi}\]
		
		The domain requirement is easy because an element of $G^{n - 1}\ln^{n}$ has as domain $\mathbb{C}$ outside of some compactum. and because $\psi \in \exp^{p}G^{n - 1}\ln^{n}$, $p < n$, at worst $e^{\psi}$ will grow slower than any exponential whereas:
		
		\[|e^{\alpha(x)} - 1| = \left|\int_{0}^{\alpha(x)}e^{y}dy\right| \leq |\alpha(x)|e^{\alpha(x)}\]
		
		which is certainly exponentially small.\\ 
		
		We now treat $A5a$ and $A5b$ together. The domain is obvious, note that in both cases the domain of $\psi$ is $\mathbb{C}^{+}$ minus some compactum. We claim that $\psi$ grows at most linearly on the real axis. Indeed the case $A5a$ follows from the fact that  $\psi \in \exp^{p}G^{n - 1}\ln^{n}$, $p < n$ so at worst $e^{\psi}$ will grow slower than any exponential, implying that $\psi$ tself grows at most linearly. In the case of $A5b$ this follows from Proposition \ref{PropGMGE}, so we get the domains we want by one final application of $3_{n}$.\\ 
		
		We move on to asymptotics. Note that
		
		\[\phi(\psi(x) + \alpha(x)) - \phi(\psi(x)) = \int_{\psi(x)}^{\psi(x) + \alpha(x)}\phi'(y)dy\]
		
		By the usual Cauchy estimates $\phi'$ is exponentially small and $\alpha$ is assumed exponentially small. So because $\psi$ goes to infinity the entirety will eventually be exponentially small.
	\end{proof}
	
	\begin{Corollary}
		Let $\alpha$ be in some $FC^{n}$, and let $\alpha$ be smaller than any exponential on the real axis, then it is identically zero.\\
		
		In particular this implies Proposition \ref{PropQAEst}.
	\end{Corollary}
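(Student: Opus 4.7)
The plan is to combine the analytic-continuation and asymptotic information from Proposition \ref{PropositionUpperBoundA1} with the Phragm\'en--Lindel\"of statement of Theorem \ref{ThmPL2} (equivalently, the Remark following the definition of standard domain). First, by $1_n$ of Proposition \ref{PropositionUpperBoundA1}, any $\alpha \in FC^n$ extends analytically to a domain containing $A^{-n}(g^{-1})(\Omega)$ for some $g \in G^{n-1}$ with $g \sim_n \id$ and some standard $\Omega$. Since $g \sim_n \id$, Proposition \ref{PropGMGE} implies that $A^{-n}(g^{-1})$ is asymptotically linear with slope $1$ on the real axis, so by the argument of $3_n$ the image $A^{-n}(g^{-1})(\Omega)$ still contains a standard domain $\tilde\Omega$.

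To apply the Phragm\'en--Lindel\"of principle for standard domains I need two things: $\alpha$ decays faster than any exponential on the real axis (which is the hypothesis), and $\alpha$ is bounded on $\tilde\Omega$. For the latter, on polynomially bounded subsets $|\alpha(\zeta)| \leq e^{-\lambda \re(\zeta)}$ holds by $1_n(b)$, whereas outside such subsets the controllability yielded by the same inductive machinery together with Proposition \ref{PropControlFuncBound} gives subexponential growth; a Phragm\'en--Lindel\"of-in-sectors step promotes these two bounds to a global bound on $\tilde\Omega$. Then the Remark after the definition of standard domain gives $\alpha \equiv 0$.

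For the implication to Proposition \ref{PropQAEst}, I would apply Theorem \ref{ThmADT} to $\Delta^{\log}$: if $\Delta \not\equiv \id$, write $\Delta^{\log} - \id$ as a finite, strictly ordered sum of terms $\phi_i \circ \exp^{m_i}\circ f_i$ with $m_i \leq n$ and $\phi_i \in FC^{m_i}$. In the convergent case, by the third point of the Proposition on $FC^{n,k,i}$, each $\phi_i$ either obeys the A1 lower bound or is smaller than any exponential on the reals. The Corollary eliminates the latter, so we are left with nonzero terms whose leading one $\phi \circ \exp^m \circ f$ satisfies $|\phi \circ \exp^m \circ f(\xi)| \geq e^{\lambda \exp^m(\mu\xi)}$ for some $\lambda < 0$, $\mu > 0$ and all large $\xi$ (using that $f \in G^{m-1}$ grows at least linearly by Proposition \ref{PropSupplAxioms}); and as in step 14(a) of the big induction, this leading term dominates all subsequent ones. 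Comparing with the hypothesis $|\Delta^{\log}(\xi) - \xi| < e^{-\lambda' \exp^n(\mu'\xi)}$ for arbitrary $\lambda', \mu' > 0$ then forces a contradiction (when $m < n$ because $\exp^n$ strictly dominates $\exp^m$, and when $m = n$ by arbitrariness of $\lambda'$).

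The main obstacle is the boundedness step in the second paragraph: the explicit exponential upper bound from Proposition \ref{PropositionUpperBoundA1} is only known on polynomially bounded subsets, and upgrading it to plain boundedness on the full standard domain $\tilde\Omega$ requires a careful combination of controllability with a Phragm\'en--Lindel\"of-in-sector argument. This is precisely the kind of delicate domain control that the convergent section was designed to provide, and is why a purely formal or purely axiomatic argument does not directly suffice.
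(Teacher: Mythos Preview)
Your proposal is essentially correct and follows the same route as the paper's proof: combine points $1$ and $3$ of Proposition~\ref{PropositionUpperBoundA1} with the Phragm\'en--Lindel\"of statement (Theorem~\ref{ThmPL2}) for the first claim, and then use $ADT$ together with Proposition~\ref{PropSupplAxioms} for the implication to Proposition~\ref{PropQAEst}.

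Two small comments. First, the statement you need for ``$A^{-n}(g^{-1})(\Omega)$ contains a standard domain'' is $3_{n-1}$ (applied to $g^{-1}\in G^{n-1}_{\preceq\id}$), not $3_{n}$. Second, you are over-engineering the boundedness step. You do not need any auxiliary Phragm\'en--Lindel\"of-in-sectors argument: simply choose the standard domain $\tilde\Omega$ to be polynomially bounded from the start. This is always possible, since any standard domain contains one of the explicit standard domains $\rho_d(\mathbb{C}^{+})$ of Lemma~\ref{LemmaDegdSD} for $d>1$ close to $1$, and those are manifestly polynomially bounded. On such a $\tilde\Omega$ the estimate $|\alpha(\zeta)|\le e^{-\lambda\re(\zeta)}$ from $1_{n}(b)$ applies directly, so $\alpha$ is bounded there and Theorem~\ref{ThmPL2} (transported through the rectifying map $\rho$) finishes the first claim immediately. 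Your treatment of the second claim is fine.
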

	
	\begin{proof}
		The first statement is obviously a combination of point $1$ and $3$ of the previous Proposition combined with Theorem \ref{ThmPL2}.\\
		
		The second is then a combination of ADT with Proposition \ref{PropSupplAxioms}.
	\end{proof}
	
	\section{Remark}
	
	\subsection{A historical note}
	
	As we heavily base our proof on the original proof in \cite{ilyashenkoFiniteness}, let us also give some indication of what we have taken (in most cases almost verbatim) from \cite{ilyashenkoFiniteness}.\\ 
	
	The Structural Theorem and its proof are taken from the proof of Ilyashenko (cf. \cite[\S 1.3]{ilyashenkoFiniteness}) though the final Structural Theorem came from notes of Ilyashenko communicated in private to the author.\\
	
	The large induction proving sufficiency of axioms comes almost verbatim from the proof of Ilyashenko (cf. \cite[\S 1.8-1.11, 2.4, 2.7-2.8, 2.10-2.11]{ilyashenkoFiniteness}), the difference is that instead of relying on the results of \cite[\S 2.3, 2.5]{ilyashenkoFiniteness} we instead replace them by axioms turning everything into formal manipulations. The argument for proving the Ordering Theorem come from the notes communicated to the author\\
	
	The idea of looking at the smallest set of functions making these formal manipulations possible and allowing us to arrive at the Additive Decomposition Theorem is new to this article. This allows us to clearly separate the bookkeeping, i.e. the sufficiency of axioms, from the geometry, i.e. proving that $FC^{n}_{i}$ satisfies $A1$.\\
	
	The parts about standard domains are taken straight from the proof of Ilyashenko (cf. \cite[\S 5.4B]{ilyashenkoFiniteness}).\\
	
	The main technical Lemmas are also take straight from Ilyashenko (cf. \cite[\S 5.4B Proposition 5, \S 1.4C Proposition 1, Proposition 2]{ilyashenkoFiniteness}) and the way the main lemma's are used, which is scattered throughout \cite[Chapter 5]{ilyashenkoFiniteness}. The name of controllable functions is new.\\ 
	
	The idea of a domain of almost degree $d$ is new, though the way it is used is certainly inspired by arguments like \cite[\S 5.2C assertion $*$]{ilyashenkoFiniteness}.\\
	
	The upper bound of $A1$ and analytic continuation are all standard, however it is certainly parallel to much of the arguments in proving regularity and extendibility of functions gotten through Shift Lemma's in \cite[Chapter II, Chapter IV]{ilyashenkoFiniteness}.\\
	
	\subsection{The origin of the subscripts 0 and 1}\label{SubsecOrigin}
	
	In this article we have presented $\mathcal{R}$ and $\mathcal{NC}$ in some sense as parameters which determine which polycycles you are able to describe, but this was certainly not the case in \cite{ilyashenkoFiniteness}. Let us briefly discuss what was the original intention in \cite{ilyashenkoFiniteness}.\\
	
	Originally $\mathcal{R}$ was the almost regular functions also found in \cite[Definition 24.27]{Ilyashenko08lectureson}, essentially it is in the logarithmic chart some germ which extends to a special set of domains of almost degree 2 called standard quadratic domains which also admit a (not necessarily convergent) Dulac series of the form:
	
	\[\zeta + \sum_{k}P_{k}(\zeta)e^{-\beta_{j}\zeta}\]
	
	The reason these were chosen is because they can describe the transit maps around hyperbolic saddles, see \cite[Proposition 24.39]{Ilyashenko08lectureson}.
	
	Originally $\mathcal{NC}$ are the cochains in for example \cite{MartinetRamisSemihyperbolic} which are able to normalize a semihyperbolic saddle, for some more elaboration on its effects, see \cite{YeungCounterExample}. But for now all we want to remark is that elements of $\mathcal{NC}$ are only able to be analytically continued to a strip (preserving Dulac series asymptotics).\\
	
	The set of admissible polycycles corresponding to this (the framework needs to be altered slightly to account for complex values, but it can be done) is all polycycles, so proving axiom A1 in this context would prove the entire Theorem of Dulac. This, or more appropriately the reverse of it, was basically the approach in \cite{ilyashenkoFiniteness}. There, a set of functional cochains $\mathcal{FC}^{n}_{i}$ was defined a priori and then \cite{ilyashenkoFiniteness} tried to prove that all the axioms hold.\\
	
	In our more constructive (in the classical sense) approach we can in fact use Proposition \ref{PropControlFuncBound} to show that an element of $FC^{n}_{0}$ can be analytically continued to a domain of almost degree $1$ while elements in $FC^{n}_{1}$ can only be analytically continued to a domain of almost degree $0$ and this is the reason why they are split, exactly in order to have that $A^{-(n + 1)}G^{n}$ (which appears in some of the axioms) is defined on $\mathbb{C}^{+}$ outside some compactum. If we were to make no distinction between $FC^{n}_{0}$ and $FC^{n}_{1}$ then we would only be able to get the domain of $A^{-(n + 1)}G^{n}$ to be something of almost degree $1$.
	
	\section{Bibliography}
	
	\bibliography{mybib}
	\bibliographystyle{plain}

\end{document}